\newtheorem{theorem}{Theorem}
\newtheorem{corollary}[theorem]{Corollary}
\newtheorem{lemma}[theorem]{Lemma}
\newtheorem{proposition}[theorem]{Proposition}
\newtheorem{remark}[theorem]{Remark}
\newenvironment{proof}[1][Proof]{\textbf{#1.} }{\ \rule{0.5em}{0.5em}}
\begin{document}

\title{Gaussian and non-Gaussian processes of zero power variation}
\author{\textsc{Francesco Russo} \thanks{ENSTA-ParisTech. Unit\'e de Math\'ematiques appliqu\'ees, 32, Boulevard Victor, F-75739 Paris Cedex 15 (France)}
\thanks{INRIA Rocquencourt Projet MathFi and Cermics Ecole des Ponts}
\textsc{and}\ \textsc{Frederi VIENS }\thanks{Department of Statistics, Purdue
University, 150 N. University St., West Lafayette, IN 47907-2067, USA } }
\maketitle

\begin{abstract}
This paper considers the class of stochastic processes $X$ defined on $[0,T]$
by \newline$X\left(  t\right)  =\int_{0}^{T}G\left(  t,s\right)  dM\left(
s\right)  $ where $M$ is a square-integrable martingale and $G$ is a
deterministic kernel. When $M$ is Brownian motion, $X$ is Gaussian, and the
class includes fractional Brownian motion and other Gaussian processes with or
without homogeneous increments. Let $m$ be an odd integer. Under the
assumption that the quadratic variation $\left[  M\right]  $ of $M$ is
differentiable with $\mathbf{E}\left[  \left\vert d\left[  M\right](t)
/dt\right\vert ^{m}\right]  $ finite, it is shown that the $m$th power
variation%
\[
\lim_{\varepsilon\rightarrow0}\varepsilon^{-1}\int_{0}^{T}ds\left(  X\left(
s+\varepsilon\right)  -X\left(  s\right)  \right)  ^{m}%
\]
exists and is zero when a quantity $\delta^{2}\left(  r\right)  $ related to
the variance of an increment of $M$ over a small interval of length $r$
satisfies $\delta\left(  r\right)  =o\left(  r^{1/(2m)}\right)  $.

In the case of a Gaussian process with homogeneous increments, $\delta$ is
$X$'s canonical metric, the condition on $\delta$ is proved to be necessary,
and the zero variation result is extended to non-integer symmetric powers,
i.e. using $\left\vert X\left(  s+\varepsilon\right)  -X\left(  s\right)
\right\vert ^{m}\mbox{sgn}\left(  X\left(  s+\varepsilon\right)  -X\left(
s\right)  \right)  $ for any real value $m\geq1$. In the non-homogeneous
Gaussian case, when $m=3$, the symmetric (generalized Stratonovich) integral
is defined, proved to exist, and its It\^{o} formula is proved to hold for all
functions of class $C^{6}$.

\end{abstract}


\textbf{KEY WORDS AND PHRASES}: Power variation, martingale Volterra
convolution, covariation, calculus via regularization, Gaussian processes,
generalized Stratonovich integral, non-Gaussian processes.\bigskip

\textbf{MSC Classification 2000}: 60G07; 60G15; 60G48; 60H05.

\section{Introduction}

The purpose of this article is to study wide classes of processes with zero
cubic variation, and more generally, zero variation of any order. Before
summarizing our results, we give a brief historical description of the topic
of $p$-variations, as a basis for our motivations.

\subsection{Historical background}

The $p$-variation of a function $f:[0,T]\rightarrow\mathbf{R}$ is the supremum
over all the possible partitions $\{0=t_{0}<\ldots<t_{N}=T\}$ of $[0,T]$ of the
quantity
\begin{equation}
\sum_{i=1}^{N-1}|f(t_{i+1})-f({t_{i}})|^{p}.\label{p-var}%
\end{equation}
The analytic monograph \cite{bruneau} contains an interesting study on this
concept, showing that a $p$-variation function is the composition of an
increasing function and a H\"{o}lder-continuous function. The notion of
$p$-variation of a stochastic process or of a function was rediscovered in
stochastic analysis, particularly in the context of pathwise (or
quasi-pathwise) stochastic calculus. The fundamental paper \cite{foellmer},
due to H. F\"{o}llmer, treats the case of 2-variations. More recent dealings
with $p$-variations and their stochastic applications, particularly to rough
path and other integration techniques, are described at length for instance in
the books \cite{dudley-norvaisa} and \cite{lyons-qian} , which also contain
excellent bibliographies on the subject.

For $p=2$, the It\^{o} stochastic calculus for semimartingales has mimicked
the notion of 2-variation, with the notion of quadratic variation. Let $S$ be
a semimartingale; as in (\ref{p-var}), consider the expression%
\begin{equation}
\sum_{i=1}^{N-1}|S(t_{i+1})-S({t_{i}})|^{2}. \label{Q-var}%
\end{equation}
One defines the quadratic variation $[S]$ of $S$ as the limit in probability
of the expression in (\ref{Q-var}) as the partition mesh goes to $0$, instead
of considering the supremum over all partitions. Moreover, the notion becomes
stochastic. In fact for a standard Brownian motion $B$, its 2-variation
$\left[  B\right]  $ is a.s. infinite, but its quadratic variation is equal to
$T$. In order to reconcile $2$-variations with the finiteness of $[B]$, many
authors have proposed restricting the supremum in \eqref{p-var} to the dyadic
partitions. However, in It\^{o} calculus, the idea of quadratic variation is
associated with the notion of covariation (also known as joint quadratic
variation) $[S^{1},S^{2}]$ of two semimartingales $S^{1},S^{2}$, something
which is not present in analytic treatments of $p$-variation. This covariation
$[S^{1},S^{2}]$ is obtained by polarization of (\ref{Q-var}), i.e. is the
limit in probability of $\sum_{i=1}^{N-1}\left(  S^{1}(t_{i+1})-S^{1}({t_{i}%
})\right)  \left(  S^{2}(t_{i+1})-S^{2}({t_{i}})\right)  $ when, again, the
partition mesh goes to zero.

In the study of stochastic processes, the $p$-variation has been analyzed in
some specific cases, such as local time processes (see \cite{walsh}), iterated
Brownian motion, whose 4-th variation is finite, and more recently fractional
Brownian motion (fBm) and related processes.
To work with a general class of processes, the tools of It\^{o} calculus would
nonetheless restrict the study of covariation to semimartingales. In
\cite{RV}, the authors enlarged the notion of covariation to general processes
$X$ and $Y$. They achieved this by modifying the definition, considering
regularizations instead of discretizations. One starting observation is the
following. Let $f:[0,T]\rightarrow\mathbf{R}$ be continuous. This $f$ has
finite variation (i.e. it admits the 1-variation) if and only if
$\lim_{\varepsilon\rightarrow0}\frac{1}{\varepsilon}\int_{0}^{T}%
|f(s+\varepsilon)-f(s)|ds$ exists. In this case, the previous limit equals the
total variation of $f$. An objective was to produce a more efficient
stochastic calculus tool, able to go beyond the case of semimartingales. Given
two processes $X$ and $Y$, their covariation $[X,Y]\left(  t\right)  $ is the
limit in probability, when $\varepsilon$ goes to zero, of
\begin{equation}
\left[  X,Y\right]  _{\varepsilon}\left(  t\right)  =\frac{1}{\varepsilon}%
\int_{0}^{t}\big(X(s+\varepsilon)-X(s)\big)\big(Y(s+\varepsilon
)-Y(s)\big)ds;\quad t\geq0. \label{SIVR1Cn}%
\end{equation}
The limit is again denoted by $[X,Y]\left(  t\right)  $ and this notion
coincides with the classical covariation when $X,Y$ are continuous
semimartingales. The processes $X$ such that $[X,X]$ exists are called finite
quadratic variation processes; their analysis and their applications were
performed in \cite{flandoli-russo00, RV00}.

The notion of covariation was also extended in order to involve more
processes. In \cite{RE} the authors consider the $n$-covariation $[X^{1}%
,X^{2},\cdots,X^{n}]$ of $n$ processes $X^{1},\ldots,X^{n}$, as in formula
(\ref{SIVR1Cn}), but with a product of $n$ increments rather than just two.
For $n=4$, for $X$ being an fBm with so-called \emph{Hurst} parameter $H=1/4$,
the paper \cite{GRV} calculates the $4$-covariation $[g\left(  X\right)
,X,X,X]$ where $g$ is, for instance, a bounded continuous function. If
$X=X^{1}=X^{2}=X^{3}$ is a single stochastic process, we denote
$[X;3]:=[X,X,X]$, which is called the \emph{cubic variation}, and is one of
the main topics of investigation in our article. Note that this variation
involves the signed cubes $(X\left(  s+\varepsilon\right)  -X(s))^{3}$, which
has the same sign as the increment $X\left(  s+\varepsilon\right)  -X(s)$,
unlike the case of quadratic or $2$-variation, or of the so-called
\emph{strong} cubic variation, where absolute values are used inside the cube
function. Consider the case where $X$ is a fractional Brownian motion $B^{H}$
with Hurst parameter $H\in\left(  0,1\right)  $. Then when $H>1/6$, 
in the introduction of \cite{GNRV}
is established that the cubic variation $[X,3]$ equals zero, 
while in \cite[Theorem 4.1
part 2(c)]{GNRV} it is shown that $[X,3]$ does not exist if $H<1/6$. In the
limiting case of $H=1/6$, \cite[Theorem 4.1 part 2(b)]{GNRV} shows that the
regularization approximation $\left[  X,3\right]  _{\varepsilon}\left(
t\right)  $ converges in law to a normal for every $t>0$. This phenomenon is
confirmed in the related study of finite-difference approximating sequence of
$[X,3]$: for instance one may use the so-called Breuer-Major central limit
theorem for stationary Gaussian sequences \cite{BM}, to prove that the
finite-difference approximating sequence of $[X,3]\left(  t\right)  $
converges in law to a Gaussian variable; this was noted in \cite[Theorem
10]{NOT}, where the authors prove that more is true: considered as a process
depending on the upper endpoint of the time interval, the approximation
converges in law to $\kappa W$ where $W$ is an independent Brownian motion,
and $\kappa$ is a universal constant given by%
\[
\kappa^{2}=\frac{3}{4}\sum_{r\in\mathbb{Z}}(|r+1|^{\frac{1}{3}}+|r-1|^{\frac
{1}{3}}-2|r|^{\frac{1}{3}}).
\]

Beyond a basic interest in the variations of non-semimartingale stochastic
processes, the significance of the cubic variation lies in its ability to
guarantee the existence of (generalized symmetric) Stratonovich integrals, and
their associated It\^{o} formula,
for highly irregular processes, notably fBm with $H>1/6$ (such fBm have
H\"{o}lder regularity parameter exceeding $1/6$; compare with the near
$1/2$-H\"{o}lder-regularity for continuous semimartingales). This existence of
a quite general It\^{o}-Stratonovich formula is a relatively
 well-known phenomenon,
established in great generality in  \cite{GNRV}. This result is a
main motivation for our work, in which we attempt to give specific classes of
Gaussian and non-Gaussian processes with the said zero cubic variation
property; indeed the conditions in \cite{GNRV} are used therein specifically
with the Gaussian class of fBms, which are fractionally self-similar and have
stationary increments; the methods in \cite{GNRV} can be extended only to
similar cases, i.e. Gaussian processes with canonical metrics that are bounded
above and below by multiples of the fBm's, for instance the bi-fractional
Brownian motion treated in \cite{TR}. Regarding the existence of
It\^{o}-Stratonovich formulas, it is instructive to recall a variant on
\cite{GNRV} established in \cite{RE}: if $f:${$\mathbf{R}$}$\rightarrow
${$\mathbf{R}$} is a function of class $C^{3}$ and $X$ has a strong cubic
variation, then the following It\^{o} type formula holds:
\begin{equation}
f(X_{t})=f(X_{0})+\int_{0}^{t}f^{\prime}(X_{s})d^{\circ}X-\frac{1}{12}\int
_{0}^{t}f^{\prime\prime\prime}(X_{s})d[X,3]\left(  s\right)  ,\label{ItoER}%
\end{equation}
and the stochastic integral in the right-hand side is the
symmetric-Stratonovich integral introduced for instance in \cite{RV2}, while
the other is a Lebesgue-Stieltjes integral. The problem is that until now no
examples are known of processes $X$ which have a cubic variation $[X,3]$ which
exists but does not vanish. In \cite{NRS}, an analogous formula to
(\ref{ItoER}) is obtained for the case $X=B^{H}$ with $H=1/6$, but in the
sense of distributions only: the symmetric integral has to be interpreted as
existing in law and the integral with respect to the cubic variation makes
sense by replacing $[X,3]$ with the term $\kappa W$, $W$ being the independent
Wiener process identified in \cite{NOT}, so that $\int_{0}^{t}f^{\prime
\prime\prime}(X_{s})d[X,3]\left(  s\right)  $ is merely defined in law as a
conditionally Wiener integral.


\subsection{Specific motivations}

Following the regularization methodology of \cite{RV} or \cite{RV2}, the cubic
variation of a process $X$, denoted by $[X,3]\left(  t\right)  $, was defined
similarly to \cite{RE} as the limit in probability or in the mean square, as
$\varepsilon\rightarrow0$, of
\[
\lbrack X,3]_{\varepsilon}\left(  t\right)  :=\varepsilon^{-1}\int_{0}%
^{t}\left(  X\left(  s+\varepsilon\right)  -X\left(  s\right)  \right)
^{3}ds.
\]
This was already mentioned above. This $\left[  X,3\right]  $ will be null for
a deterministic function $X$ as long as it is $\alpha$-H\"{o}lder-continuous
with $\alpha>1/3$. But the main physical reason for being interested in this
cubic variation for random processes is that, because the cube function is
symmetric, if the process $X$ itself has some probabilistic symmetry as well
(such as the Gaussian property and the stationarity of increments), then we
can expect $[X,3]$ to be $0$ for much more irregular processes than those
which are almost-surely $\alpha$-H\"{o}lder-continuous with $\alpha>1/3$. As
mentioned above, \cite{RE} proves that fBm has zero cubic variation as soon as
$H>1/6$, in spite of the fact that fBm is only $\alpha$-H\"{o}lder-continuous
almost surely for all $\alpha<H$. This doubling improvement over the
deterministic situation is due exclusively to the random symmetries of fBm, as
they combine with the fact that the cube function is odd. Typically for other,
non-symmetric types of variations, $H$ needs to be larger to guarantee
existence of the variation, let alone nullity; for instance, when $X$ is fBm,
its strong cubic variation, defined as the limit in probability of
$\varepsilon^{-1}\int_{0}^{t}\left\vert X\left(  s+\varepsilon\right)
-X\left(  s\right)  \right\vert ^{3}ds$, exists for $H\geq1/3$ only.

Finally, some brief notes in the case where $X$ is fBm with $H=1/6$. We have
already observed that this threshold represents a critical value in terms of
existence of cubic variation, for fBm: we mentioned that whether in the sense
of regularization or of finite-difference, the approximating sequences of
$[X,3]\left(  t\right)  $ converge in law to Gaussian laws. On the other hand,
these normal convergences contrast with one further point in the study of
variations for fBm: in our article, we show as a preliminary result
(Proposition \ref{ChaosConv} herein), that $[X,3]_{\varepsilon}$ does not
converge in probability for $H=1/6$. The non-convergence of
$[X,3]_{\varepsilon}$ in probability for $H<1/6$ was known previously, as we
said above.

These properties of fBm beg the question of what occurs for other Gaussian
processes which may not be self-similar or even have stationary increments, or
even for non-Gaussian processes with similar $\alpha$-H\"{o}lder-continuous
paths, and to what extent the threshold $\alpha>1/6$ is sharp. Similarly, can
the odd symmetry of the cube function be generalized to any \textquotedblleft
symmetric\textquotedblright\ power function, i.e. $x\mapsto|x|^{m}%
\mathrm{sgn}(x)$ with arbitrary integer or non-integer $m>1$~? This refers to
what we will call the \textquotedblleft odd $m$th variation\textquotedblright,
defined (when it exists in the mean-square sense) by
\begin{equation}
\lbrack X,m]\left(  t\right)  :=\lim_{\varepsilon\rightarrow0}\varepsilon
^{-1}\int_{0}^{t}\left\vert X\left(  s+\varepsilon\right)  -X\left(  s\right)
\right\vert ^{m}\mathrm{sgn}\left(  X\left(  s+\varepsilon\right)  -X\left(
s\right)  \right)  ds. \label{mthodd}%
\end{equation}
The qualifier \textquotedblleft odd\textquotedblright\ above, when applied to
$m=3$, can easily yield the term \textquotedblleft odd cubic
variation\textquotedblright, which has historically been called simply
\textquotedblleft cubic variation\textquotedblright\ as we noted before, as
opposed to the \textquotedblleft strong cubic variation\textquotedblright%
\ which involves absolute values; therefore in this article, we will
systematically use the qualifier \textquotedblleft odd\textquotedblright\ for
all higher order $m$th variations based on odd functions, but will typically
omit it for the cubic variation.

\subsection{Summary of results}

This article provides answers to some of the above questions, both in Gaussian
and non-Gaussian settings, and we hope that it will stimulate work on
resolving some of the remaining open problems. Specifically, we consider the
process $X$ defined on $[0,T]$ by%
\begin{equation}
X\left(  t\right)  =\int_{0}^{T}G\left(  t,s\right)  dM\left(  s\right)
\label{generalmotors}%
\end{equation}
where $M$ is a square-integrable martingale on $[0,T]$, and $G$ is a
non-random measurable function on $[0,T]^{2}$, which is square-integrable in
$s$ with respect to $d\left[  M\right]  _{s}$ for every fixed $t$. In other
words, $X$ is defined using a Volterra representation with respect to a
square-integrable martingale. The quadratic variations of these
martingale-based convolutions was studied in \cite{errami-russoCRAS}.

What we call the Gaussian case is that in which $M$ is the standard Wiener
process (Brownian motion) $W$. The itemized list below is a summary of our
results. Here, for the reader's convenience, we have not spelled out the
technical conditions which are needed for some of our results, indicating
instead references to the precise theorem statements in the body of this
article. Some conditions become more restrictive as one move from simple
Gaussian cases to non-Gaussian cases. Yet we cover much wider classes of
processes than has been done in the past. The summary below also provides
indications of how wide a scope we reach, and has references to examples in
the main body of the paper.

One condition which appears in all cases, is essentially equivalent to
requiring that all processes $X$ that we consider are not more regular than
standard Brownian motion, i.e. are not $1/2$-H\"{o}lder-continuous. This
typically takes the form of a concavity condition on the process's squared
canonical metric $\delta^{2}\left(  s,t\right)  :=\mathbf{E}\left[  \left(
X\left(  t\right)  -X\left(  s\right)  \right)  ^{2}\right]  $. This condition
is not a restriction on the range of path regularity, since the main interest
of our results occurs around the H\"{o}lder exponent $1/6$, or more generally
the exponent $1/(2m)$ for any $m>1$: the processes with zero odd $m$th
variation appear as those which are better than $1/(2m)$-H\"{o}lder-continuous
in the $L^{2}\left(  \Omega\right)  $-sense. Processes which are better than
$1/2$-H\"{o}lder-continuous are not covered by this paper, but can be treated
using classical non-probabilistic tools such as the Young integral.

We now give a summary of all our results. For any number $m\geq2$, let
\[
\lbrack X,m]_{\varepsilon}\left(  t\right)  :=\frac{1}{\varepsilon}\int
_{0}^{t}ds\left\vert X\left(  s+\varepsilon\right)  -X\left(  s\right)
\right\vert ^{m}\mbox{sgn}\left(  X\left(  s+\varepsilon\right)  -X\left(
s\right)  \right)
\]
where $\mbox{sgn}\left(  x\right)  $ is the sign function $x/\left\vert
x\right\vert $. The limit in probability of $[X,m]_{\varepsilon}\left(
t\right)  $ as $\varepsilon\rightarrow0$ is the \textquotedblleft odd $m$th
variation\textquotedblright\ of $X$ at time $t$, denotd by $[X,m]\left(
t\right)  $. Except for some results in Section \ref{STOCH}, the results in
this paper are stated without loss of generality for a fixed value of $t\leq
T$, and we typically take $t=T$; we occasionally drop the dependence on $T$,
writing only $[X,m]_{\varepsilon}$ and $[X,m]$.

\begin{itemize}
\item {}[\textbf{Homogeneous Gaussian case, odd powers: }Theorem
\ref{HomogGauss} on page \pageref{HomogGauss}]. When $X$ is Gaussian with
homogeneous increments (meaning $\delta(s,t)$ depends only on $|t-s|$), for
any odd integer $m\geq3$, $X$ has zero odd $m$th variation if \emph{and only}
if $\delta\left(  r\right)  =o\left(  r^{1/\left(  2m\right)  }\right)  $ for
$r$ near $0$.

\begin{itemize}
\item This theorem does not require any assumptions beyond $\delta$ being
increasing and concave.
\end{itemize}

\item {}[\textbf{Homogeneous Gaussian case, arbitrary real powers: }Theorem
\ref{nope} on page \pageref{nope}]. The sufficient condition of the result
above holds for any integer $m>1,$ and for any real non-integer $>1$ modulo a
mild technical condition.

\begin{itemize}
\item This theorem extends the previous result to even powers without
requiring any additional assumptions, and to all real powers under a technical
regularity assumption on the covariance which places no regularity
restrictions on the paths of $X$ (see Remark \ref{noperem}).
\end{itemize}

\item {}[\textbf{Non-homogeneous Gaussian case: }Theorem \ref{nonhomoggauss}
on page \pageref{nonhomoggauss}]. When $X$ is Gaussian with non-homogeneous
increments, for any odd integer $m\geq3$, if $\delta^{2}\left(  s,s+r\right)
=o\left(  r^{1/(2m)}\right)  $ for $r$ near $0$ uniformly in $s$, and under a
technical condition, $X$ has zero odd $m$th variation.

\begin{itemize}
\item The technical condition is a non-explosion assumption on the mixed
partial derivative of $\delta^{2}$ near the diagonal. It places no regularity
restriction on the paths of $X$. The description on page \pageref{RL} shows
that the condition is satisfied for the so-called Riemann-Liouville version of
fBm, and for a wide class of Volterra-convolution-type Gaussian processes with
inhomogeneous increments.
\end{itemize}

\item {}[\textbf{Non-Gaussian martingale case: }Theorem \ref{MartThm} on page
\pageref{MartThm}]. Let $m\geq3$ be an odd integer. When $X$ is non-Gaussian
as in (\ref{generalmotors}), based on a martingale $M$ whose quadratic
variation process has a derivative with $2m$ moments (the actual condition on
$M$ in the theorem is weaker), let $\Gamma\left(  t\right)  =\left(
\mathbf{E}\left[  \left(  d\left[  M\right]  /dt\right)  ^{m}\right]  \right)
^{1/(2m)}$ and consider the Gaussian process%
\[
Z\left(  t\right)  =\int_{0}^{T}\Gamma\left(  s\right)  G\left(  t,s\right)
dW\left(  s\right)  .
\]
Under a technical integrability condition on planar increments of $\Gamma G$
near the diagonal, if $Z$ satisfies the conditions of Theorem \ref{HomogGauss}
or Theorem \ref{nonhomoggauss}, then $X$ has zero odd $m$th variation.

\begin{itemize}
\item Proposition \ref{Ex} on page \pageref{Ex} provides examples of wide
classes of martingales and kernels for which the assumptions of Theorem
\ref{MartThm} are satisfied. Details on how to construct these examples, and
how to evaluate their regularity properties, are given on page \ref{pex}.

\item A key consequence of Proposition \ref{Ex} and Theorem \ref{MartThm} is
that this paper's results extend from the Gaussian case to highly non-Gaussian
situations, insofar as, for $m$ an odd integer, it is easy to construct a
variety of martingales $M$ with no more than $m$ moments, which are comparable
to their Gaussian analogues in terms of path regularity, and for which the
corresponding $X$ in (\ref{defX}) has null odd $m$th variation. This is
explained on page \ref{pex}.

\item It is important to note that while the base process $M$ used here is a
martingale, the process $X$ in (\ref{defX}) whose variation we study is as far
from being a martingale as fBm is.
\end{itemize}

\item {}[\textbf{It\^{o} formula: }Theorem \ref{ForItoThm} on page
\pageref{ForItoThm}, and its corollary]. When $m\geq3$ is an odd integer and
$X$ is a Gaussian process with non-homogeneous increments such that
$\delta^{2}\left(  s,s+r\right)  =o\left(  r^{1/(2m)}\right)  $ uniformly in
$s$, under some additional technical conditions, for every bounded measurable
function $g$ on $\mathbf{R}$,
\[
\lim_{\varepsilon\rightarrow0}\frac{1}{\varepsilon^{2}}\mathbf{E}\left[
\left(  \int_{0}^{T}du\left(  X_{u+\varepsilon}-X_{u}\right)  ^{m}g\left(
\frac{X_{u+\varepsilon}+X_{u}}{2}\right)  \right)  ^{2}\right]  =0.
\]
If $m=3$, by results in \cite{GNRV}, Theorem \ref{ForItoThm} implies that for
any $f\in C^{6}\left(  \mathbf{R}\right)  $ and $t\in\lbrack0,T]$, the It\^{o}
formula $f\left(  X_{t}\right)  =f\left(  X_{0}\right)  +\int_{0}^{t}%
f^{\prime}\left(  X_{u}\right)  d^{\circ}X_u  $ holds, where the
integral is in the symmetric (generalized Stratonovich) sense. This formula is
in Corollary \ref{coroll} on page \pageref{coroll}.

\begin{itemize}
\item The scope of the technical conditions needed for the theorem and its
corollary is discussed immediately after the corollary. These conditions
include similar monotonicity and concavity conditions as are used in the
remainder of the article, plus some coercivity conditions ensuring that the
process $X$ is not too far from having homogeneous increments. The discussion
after Corollary \ref{coroll} establishes that the coercivity conditions are
satisfied in the homogeneous case.
\end{itemize}
\end{itemize}

\subsection{Relation with other recent work}

We finish this introduction with a description of recent work done by several
other authors on problems related to our preoccupations to some extent, in
various directions. The authors of the paper \cite{HNS} consider, as we do,
stochastic processes which can be written as Volterra integrals with respect
to martingales. In fact, they study the concept of \textquotedblleft
fractional martingale\textquotedblright, which is the generalization of the
so-called Riemann-Liouville fractional Brownian motion when the driving noise
is a martingale. This is a special case of the processes we consider in
Section \ref{NGC}, with $K\left(  t,s\right)  =\left(  t-s\right)  ^{H-1/2}$.
The authors' motivation is to prove an analogue of the famous characterization
of Brownian motion as the only continuous square-integrable martingale with a
quadratic variation equal to $t$. They provide similar necessary and
sufficient conditions based on the $1/H$-variation for a process to be
fractional Brownian motion. The paper \cite{HNS} does not follow, however, the
same motivation as our work: for us, say in the case of $m=3$, we study the
threshold $H>1/6$ for vanishing (odd) cubic variations in various Gaussian and
non-Gaussian contexts, and its relation to stochastic calculus.

To find a similar motivation to ours, one may look at the recent result of
\cite{NNT}, where the authors study the central and non-central behavior of
weighted Hermite variations for fBm. Using the Hermite polynomial of order $q$
rather than the power-$q$ function, they show that the threshold value
$H=1/\left(  2q\right)  $ poses an interesting open problem, since above this
threshold (but below $H=1-1/\left(  2q\right)  $) one obtains Gaussian limits
(these limits are conditionally Gaussian when weights are present, and can be
represented as stochastic integrals with respect to an independent Brownian
motion), while below the threshold, degeneracy occurs. The behavior at the
threshold was worked out for $H=1/4,q=2$ in \cite{NNT}, boasting an exotic
correction term with an independent Brownian motion, while the general open
problem of Hermite variations with $H=1/\left(  2q\right)  $ was settled in
\cite{NN}. More questions arise, for instance, with a similar result in
\cite{N2} for $H=1/4$, but this time with bidimensional fBm, in which two
independent Brownian motions are needed to characterize the exotic correction term.

The value $H=1/6$ is mentioned again in the context of the stochastic heat
equation driven by space-time white-noise, in which discrete trapezoidal sums
converge in distribution (not in probability) to a conditionally independent
Brownian motion: see \cite{BS} and \cite{NOT}.\bigskip

Summarizing, when compared to the works described in the above paragraphs, our
work situates itself by

\begin{itemize}
\item choosing to prove necessary and sufficient conditions for nullity of the
cubic variation, around the threshold regularity value $H=1/6$, for Gaussian
processes with homogeneous increments (this is a wider class than previously
considered, showing in particular that self-similarity is not related to the
question of nullity of the cubic variation);

\item studying the nullity threshold for higher order \textquotedblleft
odd\textquotedblright\ power functions, with possibly non-integer order,
showing that this property relies only on the symmetry of Gaussian processes
with homogeneous increments and on the symmetrization of the power functions;

\item showing that our method is able to consider processes that are far from
Gaussian and still yield sharp sufficient conditions for nullity of odd power
variations, since our base noise may be a generic martingale with only a few
moments.\bigskip
\end{itemize}

The article has the following structure. Section \ref{DEF} contains some
formal definitions and notations. The basic theorems in the Gaussian case are
in Section \ref{GAUSS}, where the homogeneous case, non-homogeneous case, and
case of non-integer $m$ are separated in three subsections. The use of
non-Gaussian martingales is treated in Section \ref{NGC}. Section \ref{STOCH}
presents the It\^{o} formula.

\section{Definitions\label{DEF}}

We recall our process $X$ defined for all $t\in\lbrack0,T]$ by%
\begin{equation}
X\left(  t\right)  =\int_{0}^{T}G\left(  t,s\right)  dM\left(  s\right)
\label{defX}%
\end{equation}
where $M$ is a square-integrable martingale on $[0,T]$, and $G$ is a
non-random measurable function on $[0,T]^{2}$, which is square-integrable in
$s$ with respect to $d\left[  M\right]  _{s}$ for every fixed $t$. For any
real number $m\geq2$, let the \emph{odd }$\varepsilon$\emph{-}$m$\emph{-th
variation} of $X$ be defined by%
\begin{equation}
\lbrack X,m]_{\varepsilon}\left(  T\right)  :=\frac{1}{\varepsilon}\int
_{0}^{T}ds\left\vert X\left(  s+\varepsilon\right)  -X\left(  s\right)
\right\vert ^{m}\mbox{sgn}\left(  X\left(  s+\varepsilon\right)  -X\left(
s\right)  \right)  . \label{defXm}%
\end{equation}
The odd variation is different from the absolute (or strong) variation because
of the presence of the sign function, making the function $\left\vert
x\right\vert ^{m}\mbox{sgn}\left(  x\right)  $ an odd function. In the sequel,
in order to lighten the notation, we will write $\left(  x\right)  ^{m}$ for
$\left\vert x\right\vert ^{m}\mbox{sgn}\left(  x\right)  $. We say that $X$
has zero odd $m$-th variation (in the mean-squared sense) if the limit%
\begin{equation}
\lim_{\varepsilon\rightarrow0}[X,m]_{\varepsilon}\left(  T\right)  =0
\label{limnulcub}%
\end{equation}
holds in $L^{2}\left(  \Omega\right)  $.

The \emph{canonical metric} $\delta$ of a stochastic process $X$ is defined as
the pseudo-metric on $[0,T]^{2}$ given by
\[
\delta^{2}\left(  s,t\right)  =\mathbf{E}\left[  \left(  X\left(  t\right)
-X\left(  s\right)  \right)  ^{2}\right]  .
\]
The \emph{covariance function} of $X$ is defined by%
\[
Q\left(  s,t\right)  =\mathbf{E}\left[  X\left(  t\right)  X\left(  s\right)
\right]  .
\]
The special case of a centered Gaussian process is of primary importance; then
the process's entire distribution is characterized by $Q$, or alternately by
$\delta$ and the variances $var\left(  X\left(  t\right)  \right)  =Q\left(
t,t\right)  $, since we have $Q\left(  s,t\right)  =\frac{1}{2}\left(
Q\left(  s,s\right)  +Q\left(  t,t\right)  -\delta^{2}\left(  s,t\right)
\right)  $. We say that $\delta$ has \emph{homogeneous increments} if there
exists a function on $[0,T]$ which we also denote by $\delta$ such that%
\[
\delta\left(  s,t\right)  =\delta\left(  \left\vert t-s\right\vert \right)  .
\]
Below, we will refer to this situation as the \emph{homogeneous case}. This is
in contrast to usual usage of this appellation, which is stronger, since for
example in the Gaussian case, it refers to the fact that $Q\left(  s,t\right)
$ depends only on the difference $s-t$; this would not apply to, say, standard
or fractional Brownian motion, while our definition does. In non-Gaussian
settings, the usual way to interpret the \textquotedblleft
homogeneous\textquotedblright\ property is to require that the processes
$X\left(  t+\cdot\right)  $ and $X\left(  \cdot\right)  $ have the same law,
which is typically much more restrictive than our definition.

The goal of the next two sections is to define various general conditions
under which a characterization of the limit in (\ref{limnulcub}) being zero
can be established. In particular, we aim to show that $X$ has zero odd $m$-th
variation for well-behaved $M$'s and $G$'s as soon as%
\begin{equation}
\delta\left(  s,t\right)  =o\left(  \left\vert t-s\right\vert ^{1/\left(
2m\right)  }\right)  , \label{condelta}%
\end{equation}
and that this is a necessary condition in some cases. Although this is a
mean-square condition, it can be interpreted as a regularity (local) condition
on $X$; for example, when $X$ is a Gaussian process with homogeneous
increments, this condition means precisely that almost surely, the uniform
modulus of continuity $\omega$ of $X$ on any fixed closed interval, defined by
$\omega\left(  r\right)  =\sup\left\{  \left\vert X\left(  t\right)  -X\left(
s\right)  \right\vert :\left\vert t-s\right\vert <r\right\}  $, satisfies
$\omega\left(  r\right)  =o\left(  r^{1/6}\log^{1/2}\left(  1/r\right)
\right)  $. The lecture notes \cite{A}, as well as the article \cite{TTV}, can
be consulted for this type of statement.

\section{Gaussian case\label{GAUSS}}

We assume that $X$ is centered Gaussian. Then we can write $X$ as in formula
(\ref{defX}) with $M=W$ a standard Brownian motion. More importantly,
beginning with the easiest case where $m$ is an odd integer, we can easily
show the following.

\begin{lemma}
\label{lemma1}If $m$ is an odd integer $\geq3$, we have%
\begin{align*}
&  \mathbf{E}\left[  \left(  [X,m]_{\varepsilon}\left(  T\right)  \right)
^{2}\right] \\
&  =\frac{1}{\varepsilon^{2}}\sum_{j=0}^{\left(  m-1\right)  /2}c_{j}\int
_{0}^{T}\int_{0}^{t}dtds\Theta^{\varepsilon}\left(  s,t\right)  ^{m-2j}%
\ Var\left[  X\left(  t+\varepsilon\right)  -X\left(  t\right)  \right]
^{j}\ Var\left[  X\left(  s+\varepsilon\right)  -X\left(  s\right)  \right]
^{j}\\
&  :=\sum_{j=0}^{\left(  m-1\right)  /2}J_{j}%
\end{align*}
where the $c_{j}$'s are constants depending only on $j$, and
\[
\Theta^{\varepsilon}\left(  s,t\right)  :=\mathbf{E}\left[  \left(  X\left(
t+\varepsilon\right)  -X\left(  t\right)  \right)  \left(  X\left(
s+\varepsilon\right)  -X\left(  s\right)  \right)  \right]  .
\]

\end{lemma}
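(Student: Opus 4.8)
The plan is to expand the square inside the expectation, interchange expectation with the (double) integral, and then evaluate the resulting elementary bivariate Gaussian moment by a pairing argument.

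Since $m$ is an odd integer, the odd power coincides with the ordinary monomial, $\left( x\right) ^{m}=\left\vert x\right\vert ^{m}\mathrm{sgn}(x)=x^{m}$. Writing $U_{u}:=X(u+\varepsilon)-X(u)$ and squaring the definition (\ref{defXm}),
\[
\left( [X,m]_{\varepsilon}(T)\right) ^{2}=\frac{1}{\varepsilon^{2}}\int_{0}^{T}\int_{0}^{T}ds\,dt\;U_{s}^{m}\,U_{t}^{m}.
\]
Each $U_{u}$ is centered Gaussian, hence has moments of every order, and since $\delta^{2}$ is bounded on the compact $[0,T]^{2}$ one gets $\sup_{s,t}\mathbf{E}[\,|U_{s}|^{m}|U_{t}|^{m}]<\infty$ (Cauchy--Schwarz plus $\mathbf{E}[U_{u}^{2m}]\le C_{m}(\sup\delta^{2})^{m}$); thus $U_{s}^{m}U_{t}^{m}\in L^{1}(\Omega\times[0,T]^{2})$, Tonelli--Fubini applies, and
\[
\mathbf{E}\left[ \left( [X,m]_{\varepsilon}(T)\right) ^{2}\right] =\frac{1}{\varepsilon^{2}}\int_{0}^{T}\int_{0}^{T}ds\,dt\;\mathbf{E}\left[ U_{s}^{m}U_{t}^{m}\right] .
\]

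The heart of the matter is the scalar identity, valid for any centered Gaussian pair $(U_{s},U_{t})$ with $m$ odd,
\[
\mathbf{E}\left[ U_{s}^{m}U_{t}^{m}\right] =\sum_{j=0}^{(m-1)/2}\tilde{c}_{j}\;\Theta^{\varepsilon}(s,t)^{m-2j}\;Var[U_{s}]^{j}\,Var[U_{t}]^{j},
\]
which I would obtain in one of two equivalent ways. (i) By Wick's (Isserlis') theorem, $\mathbf{E}[U_{s}^{m}U_{t}^{m}]$ is the sum over all pairings of the $2m$ factors ($m$ copies of $U_{s}$, $m$ of $U_{t}$); a pairing that uses $2a$ internal $U_{s}$--$U_{s}$ contractions and $2b$ internal $U_{t}$--$U_{t}$ contractions leaves $m-2a$ and $m-2b$ factors to be matched across the two groups, forcing $m-2a=m-2b$, i.e. $a=b=:j$, and each such pairing contributes $Var[U_{s}]^{j}Var[U_{t}]^{j}\,\Theta^{\varepsilon}(s,t)^{m-2j}$; the number of such pairings is a combinatorial constant $\tilde{c}_{j}$ depending only on $m$ (fixed) and $j$. (ii) Alternatively, expand $x^{m}=\sum_{k}b_{m,k}H_{k}(x)$ in probabilists' Hermite polynomials (only $k$ with $k\equiv m\ (\mathrm{mod}\ 2)$, $1\le k\le m$, occur), normalize $U_{u}=\sqrt{Var[U_{u}]}\,\widehat{U}_{u}$ with $\widehat{U}_{u}$ standard normal, and use $\mathbf{E}[H_{k}(\widehat{U}_{s})H_{l}(\widehat{U}_{t})]=\delta_{k,l}\,k!\,\rho^{k}$ with $\rho=\Theta^{\varepsilon}(s,t)/\sqrt{Var[U_{s}]Var[U_{t}]}$; collecting powers and setting $k=m-2j$ gives the same formula. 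Either way, $m$ odd forces $m-2j$ odd, hence $\ge1$, so a factor $\Theta^{\varepsilon}$ always survives and $j$ runs over $0,\dots,(m-1)/2$ exactly, with no term in which $\Theta^{\varepsilon}$ drops out.

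To finish, observe that $\mathbf{E}[U_{s}^{m}U_{t}^{m}]$ is symmetric under $s\leftrightarrow t$, so $\int_{0}^{T}\int_{0}^{T}ds\,dt=2\int_{0}^{T}\int_{0}^{t}dt\,ds$ (the diagonal having Lebesgue measure zero); absorbing the factor $2$ into the constants, $c_{j}:=2\tilde{c}_{j}$, and recalling that $Var[U_{u}]^{j}$ is precisely the quantity appearing in the statement, produces exactly the claimed formula, with the $j$-th summand read off as $J_{j}$. The step I would be most careful with — the only real subtlety — is the combinatorial bookkeeping in (i)/(ii): checking that cross-group contractions force equal internal contraction counts on the two sides, and that the resulting coefficient genuinely depends only on $j$ for $m$ fixed. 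The Hermite-expansion route (ii) makes this bookkeeping essentially automatic, at the modest cost of quoting the standard product formula for Hermite polynomials and the parity of the Hermite expansion of $x^{m}$.
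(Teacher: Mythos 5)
Your proposal is correct and follows essentially the same route as the paper: reduce $\mathbf{E}\bigl[([X,m]_{\varepsilon}(T))^{2}\bigr]$ by Fubini to the bivariate Gaussian moment $\mathbf{E}[U_{s}^{m}U_{t}^{m}]$ and invoke the identity expressing it as $\sum_{j}c_{j}\Theta^{m-2j}\,Var[U_{s}]^{j}Var[U_{t}]^{j}$, the paper simply citing this identity as Lemma 5.2 of \cite{GNRV} where you supply a proof via Wick's theorem or Hermite expansion. Your pairing/Hermite bookkeeping and the symmetrization absorbing the factor $2$ into the $c_{j}$ are both sound, so the argument is complete.
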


\begin{proof}
The lemma is an easy consequence of the following formula, which can be found
as Lemma 5.2 in \cite{GNRV}: for any centered jointly Gaussian pair of r.v.'s
$\left(  Y,Z\right)  $, we have%
\[
\mathbf{E}\left[  Y^{m}Z^{m}\right]  =\sum_{j=0}^{\left(  m-1\right)  /2}%
c_{j}\mathbf{E}\left[  YZ\right]  ^{m-2j}\ Var\left[  X\right]  ^{j}%
\text{\ }Var\left[  Y\right]  ^{j}.
\]

\end{proof}

We may translate $\Theta^{\varepsilon}\left(  s,t\right)  $ immediately in
terms of $Q$, and then $\delta$. We have:%
\begin{align}
\Theta^{\varepsilon}\left(  s,t\right)   &  =Q\left(  t+\varepsilon
,s+\varepsilon\right)  -Q\left(  t,s+\varepsilon\right)  -Q\left(
s,t+\varepsilon\right)  +Q\left(  s,t\right) \nonumber\\
&  =\frac{1}{2}\left[  -\delta^{2}\left(  t+\varepsilon,s+\varepsilon\right)
+\delta^{2}\left(  t,s+\varepsilon\right)  +\delta^{2}\left(  s,t+\varepsilon
\right)  -\delta^{2}\left(  s,t\right)  \right] \label{seeplanar}\\
&  =:-\frac{1}{2}\Delta_{\left(  s,t\right)  ;\left(  s+\varepsilon
,t+\varepsilon\right)  }\delta^{2}. \label{defDelta}%
\end{align}
Thus $\Theta^{\varepsilon}\left(  s,t\right)  $ appears as the opposite of the
planar increment of the canonical metric over the rectangle defined by its
corners $\left(  s,t\right)  $ and $\left(  s+\varepsilon,t+\varepsilon
\right)  $.

\subsection{The case of fBm}

Before finding sufficient and possibly necessary conditions for various
Gaussian processes to have zero cubic (or $m$th) variation, we discuss the
threshold case for the cubic variation of fBm. Recall that when $X$ is fBm
with parameter $H=1/6$, as mentioned in the Introduction, it is known from
\cite[Theorem 4.1 part (2)]{GNRV} that $[X,3]_{\varepsilon}\left(  T\right)  $
converges in distribution to a non-degenerate normal law. However, there does
not seem to be any place in the literature specifying whether the convergence
may be any stronger than in distribution. We address this issue here.

\begin{proposition}
\label{ChaosConv}Let $X$ be an fBm with Hurst parameter $H=1/6$. Then $X$ does
not have a cubic variation (in the mean-square sense), by which we mean that
$[X,3]_{\varepsilon}\left(  T\right)  $ has no limit in $L^{2}\left(
\Omega\right)  $ as $\varepsilon\rightarrow0$. In fact more is true:
$[X,3]_{\varepsilon}\left(  T\right)  $ has no limit in probability as
$\varepsilon\rightarrow0$.
\end{proposition}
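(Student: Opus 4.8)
The plan is to exploit the known convergence in distribution from \cite[Theorem 4.1 part (2)]{GNRV} together with the structure of the Wiener chaos decomposition of $[X,3]_{\varepsilon}(T)$. First I would observe that, since $X$ is fBm with $H=1/6$ and the cube function $(x)^3$ (in the signed sense) is odd, the random variable $[X,3]_{\varepsilon}(T)=\varepsilon^{-1}\int_0^T (X(s+\varepsilon)-X(s))^3\,ds$ lives in the sum of the first and third Wiener chaoses of $W$: expanding $(X(s+\varepsilon)-X(s))^3$ via the Hermite polynomial identity $x^3 = H_3(x)\sigma_\varepsilon^3(s) + 3\sigma_\varepsilon^2(s)\,x$ up to the appropriate normalizations (where $\sigma_\varepsilon^2(s)=\mathrm{Var}(X(s+\varepsilon)-X(s))$), and integrating, one writes $[X,3]_{\varepsilon}(T) = F_\varepsilon^{(1)} + F_\varepsilon^{(3)}$, with $F_\varepsilon^{(k)}$ in the $k$th chaos. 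The first-chaos term $F_\varepsilon^{(1)}$ is a Gaussian random variable whose variance I would compute directly; a scaling/self-similarity computation for fBm with $H=1/6$ shows this variance stays bounded and in fact converges, and one checks $F_\varepsilon^{(1)}\to 0$ in $L^2$ (it carries a factor $\sigma_\varepsilon^2(s)\sim c\,\varepsilon^{1/3}$ which, after the $\varepsilon^{-1}$ and the $ds$ integration against increments of size $\varepsilon^{1/6}$, vanishes). Hence the nontrivial limit in distribution must come entirely from the third-chaos part $F_\varepsilon^{(3)}$.

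Next I would invoke the key rigidity fact about fixed Wiener chaoses: by a theorem of Nualart–Peccati / Schreiber-type results on convergence in law of chaos sequences (specifically, that a sequence in a fixed chaos $\mathcal{H}_{:q:}$ which converges in probability must converge in $L^2$, since all $L^p$ norms on a fixed chaos are equivalent and convergence in probability plus boundedness in $L^2$ upgrades to $L^2$ convergence, whose limit would again lie in $\mathcal{H}_{:q:}$), I argue as follows. Suppose for contradiction that $[X,3]_{\varepsilon}(T)$ converges in probability as $\varepsilon\to 0$. Then, subtracting the $L^2$-null term $F_\varepsilon^{(1)}$, the third-chaos sequence $F_\varepsilon^{(3)}$ converges in probability. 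By hypercontractivity on the third chaos, this sequence is bounded in every $L^p$, hence convergence in probability forces convergence in $L^2$ to some limit $F\in\mathcal{H}_{:3:}$. But then $[X,3]_{\varepsilon}(T)\to F$ in $L^2$, hence in distribution, so $F$ must have the non-degenerate normal law of \cite[Theorem 4.1 part (2)]{GNRV}. This is the contradiction: a nonzero element of the third Wiener chaos cannot be Gaussian — its law is that of a genuine degree-3 polynomial in Gaussians, which (when nonzero) has nonzero skewness / is not symmetric-Gaussian, or more robustly, one cites that the only Gaussian element of $\bigoplus_{q\ge 1}\mathcal{H}_{:q:}$ lying in a single chaos $\mathcal{H}_{:q:}$ with $q\ge 2$ is $0$ (e.g. because $\mathbf{E}[F^4] = 3(\mathbf{E}[F^2])^2$ fails for nonzero $q\ge 2$ chaos elements unless $F=0$, by the fourth-moment theorem). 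Since the limit in distribution is non-degenerate normal, it cannot equal any $F\in\mathcal{H}_{:3:}$, and we are done; the impossibility of $L^2$ convergence is then immediate as a special case.

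The main obstacle I anticipate is the clean separation into chaoses and the precise verification that the first-chaos component is $L^2$-negligible: this requires an honest computation of $\mathbf{E}[(F_\varepsilon^{(1)})^2]$ using the fBm covariance $\Theta^\varepsilon(s,t) = -\tfrac12\Delta_{(s,t);(s+\varepsilon,t+\varepsilon)}\delta^2$ with $\delta^2(r)=r^{1/3}$, via Lemma \ref{lemma1} restricted to the $j=(m-1)/2=1$ term, and showing this integral is $o(1)$ — essentially a Riemann-sum / dominated-convergence estimate on the planar increments near the diagonal. The second potential subtlety is making rigorous the statement ``a nonzero third-chaos variable is not Gaussian''; the cleanest route is the fourth moment theorem (if $F\in\mathcal{H}_{:3:}$ and $F$ is Gaussian then $\mathbf{E}[F^4]=3(\mathbf{E}F^2)^2$, which by the Nualart–Peccati characterization forces $F=0$), so I would cite that rather than argue by hand. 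Everything else — hypercontractivity giving $L^p$-boundedness, and ``convergence in probability $+$ $L^p$-boundedness on a fixed chaos $\Rightarrow$ $L^2$ convergence with limit in the same chaos'' — is standard and can be quoted.
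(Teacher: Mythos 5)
Your architecture is exactly the paper's: split $[X,3]_{\varepsilon}(T)=\mathcal{I}_{1}+\mathcal{I}_{3}$ into first and third chaos, show $\mathcal{I}_{1}\rightarrow0$ in $L^{2}$, and then argue that convergence in probability would force $L^{2}$ convergence of the third-chaos part to a limit inside the (closed) third chaos, contradicting the non-degenerate normal limit in law from \cite[Theorem 4.1 part (2)]{GNRV}. The fourth-moment-theorem justification you give for ``a nonzero third-chaos element is not Gaussian'' is a clean way to make rigorous what the paper simply asserts.

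Two sub-steps, however, would not close as you have sketched them. First, your heuristic for $\mathcal{I}_{1}\rightarrow0$ (``$\varepsilon^{-1}$ times $\sigma_{\varepsilon}^{2}\sim\varepsilon^{1/3}$ times a $ds$-integral of increments of size $\varepsilon^{1/6}$'') gives $\varepsilon^{-1}\cdot\varepsilon^{1/3}\cdot\varepsilon^{1/6}=\varepsilon^{-1/2}$, which diverges; likewise a dominated-convergence bound on $|\Theta^{\varepsilon}|$ yields only $O(1)$ at $H=1/6$ since $\varepsilon^{-2}\delta^{4}(\varepsilon)\iint|\Theta^{\varepsilon}|\sim\varepsilon^{6H-1}=\varepsilon^{0}$. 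The estimate must exploit the sign cancellation in the double integral of $\Theta^{\varepsilon}$: the paper writes $\iint\Theta^{\varepsilon}(s,t)\,ds\,dt=Var\left[\int_{T}^{T+\varepsilon}X(t)\,dt-\int_{0}^{\varepsilon}X(t)\,dt\right]=O(\varepsilon^{2})$, which yields $\mathbf{E}[|\mathcal{I}_{1}|^{2}]=O(\varepsilon^{4H})$. Second, your appeal to hypercontractivity to get $L^{p}$-boundedness of $\mathcal{I}_{3}$ presupposes $L^{2}$-boundedness, which you never establish; the paper proves $\sup_{\varepsilon}\mathbf{E}[|\mathcal{I}_{3}|^{2}]<\infty$ by an explicit diagonal/off-diagonal computation (two applications of the mean value theorem to $r^{2H}$ off the diagonal, and a scaling argument on the diagonal). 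You could instead invoke the Schreiber-type result that a fixed-chaos sequence converging in law is automatically $L^{2}$-bounded, but then that citation is doing real work and should be made explicit rather than folded into ``standard and can be quoted.''
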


In order to prove the proposition, we study the Wiener chaos representation
and moments of $[X,3]_{\varepsilon}\left(  T\right)  $ when $X$ is fBm; $X$ is
given by (\ref{defX}) where $W$ is Brownian motion and the kernel $G$ is
well-known. Information on $G$ and on the Wiener chaos generated by $W$ can be
found respectively in Chapters 5 and 1 of the textbook \cite{Nbook}. The
covariance formula for an fBm $X$ is
\begin{equation}
R_{H}\left(  s,t\right)  :=\mathbf{E}\left[  X\left(  t\right)  X\left(
s\right)  \right]  =2^{-1}\left(  s^{2H}+t^{2H}-\left\vert t-s\right\vert
^{2H}\right)  . \label{fBmcov}%
\end{equation}

\begin{lemma}
\label{X3eExact}Fix $\varepsilon>0$. Let $\Delta X_{s}:=X\left(
s+\varepsilon\right)  -X\left(  s\right)  $ and $\Delta G_{s}\left(  u\right)
:=G\left(  s+\varepsilon,u\right)  -G\left(  s,u\right)  $. Then%
\begin{align}
\lbrack X,3]_{\varepsilon}\left(  T\right)   &  =\mathcal{I}_{1}%
+\mathcal{I}_{3}\nonumber\\
&  =:\frac{3}{\varepsilon}\int_{0}^{T}ds\int_{0}^{T}\Delta G_{s}\left(
u\right)  dW\left(  u\right)  \left(  \int_{0}^{T}\left\vert \Delta
G_{s}\left(  v\right)  \right\vert ^{2}dv\right) \label{firstchaos}\\
&  +\frac{6}{\varepsilon}\int_{0}^{T}dW\left(  s_{3}\right)  \int_{0}^{s_{3}%
}dW\left(  s_{2}\right)  \int_{0}^{s_{2}}dW\left(  s_{1}\right)  \int_{0}%
^{T}\left[  \prod_{k=1}^{3}\Delta G_{s}\left(  s_{k}\right)  \right]  ds.
\label{thirdchaos}%
\end{align}

\end{lemma}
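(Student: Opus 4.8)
The plan is to expand the integrand $\left(\Delta X_s\right)^{3}$ into Wiener chaos for each fixed $s$, then integrate in $s$, divide by $\varepsilon$, and finally push the deterministic $ds$-integration through the stochastic integrals. Since $m=3$ is odd, $\left(\Delta X_{s}\right)^{3}$ in the sense of (\ref{defXm}) is just the ordinary cube of $\Delta X_{s}$. I would begin by recording that $\Delta X_{s}=X(s+\varepsilon)-X(s)=\int_{0}^{T}\Delta G_{s}(u)\,dW(u)$ is a Wiener integral, i.e. a first-chaos element which I denote $I_{1}(\Delta G_{s})$, whose $L^{2}(\Omega)$-norm squared equals $\int_{0}^{T}\left\vert \Delta G_{s}(v)\right\vert ^{2}dv=\mathrm{Var}\,\Delta X_{s}=\delta^{2}(s,s+\varepsilon)$. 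The product formula for powers of a single first-chaos element (equivalently $x^{3}=He_{3}(x)+3x$ together with $I_{3}(h^{\otimes3})=\|h\|^{3}He_{3}(I_{1}(h)/\|h\|)$, as found in the textbook \cite{Nbook}) then gives, for each $s$,
\[
\left(\Delta X_{s}\right)^{3}=I_{3}\!\left(\Delta G_{s}^{\otimes3}\right)+3\left(\int_{0}^{T}\left\vert \Delta G_{s}(v)\right\vert ^{2}dv\right)I_{1}\!\left(\Delta G_{s}\right).
\]
Dividing by $\varepsilon$ and integrating over $s\in\lbrack0,T]$, linearity of the Wiener integral identifies the first-chaos part with $\mathcal{I}_{1}$ exactly as in (\ref{firstchaos}).

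For the third-chaos part I would use the standard representation of $I_{3}$ of a symmetric kernel as $3!$ times the iterated It\^{o} integral over the ordered simplex $\{0<s_{1}<s_{2}<s_{3}<T\}$. Since $\Delta G_{s}^{\otimes3}(s_{1},s_{2},s_{3})=\prod_{k=1}^{3}\Delta G_{s}(s_{k})$ is already symmetric, this yields
\[
I_{3}\!\left(\Delta G_{s}^{\otimes3}\right)=6\int_{0}^{T}dW(s_{3})\int_{0}^{s_{3}}dW(s_{2})\int_{0}^{s_{2}}dW(s_{1})\ \prod_{k=1}^{3}\Delta G_{s}(s_{k}).
\]
Multiplying by $1/\varepsilon$, integrating in $s$, and applying a stochastic Fubini theorem to pull $\int_{0}^{T}ds$ inside the three stochastic integrals produces $\mathcal{I}_{3}$ in the form (\ref{thirdchaos}); uniqueness of the chaos decomposition then confirms that there are no further terms.

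The only genuinely technical point, and the place where I expect to spend real effort, is the justification of the stochastic Fubini step together with the $L^{2}(\Omega)$ well-posedness of each integral. By the chaos isometries (with $\|I_{3}(f)\|_{L^{2}(\Omega)}=\sqrt{3!}\,\|f\|$ for symmetric $f$ and $\|\Delta G_{s}^{\otimes3}\|_{L^{2}([0,T]^{3})}=\|\Delta G_{s}\|^{3}$) this all reduces to checking $\int_{0}^{T}\big(\int_{0}^{T}|\Delta G_{s}(v)|^{2}dv\big)^{3/2}ds<\infty$, i.e. $\int_{0}^{T}\delta^{3}(s,s+\varepsilon)\,ds<\infty$. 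For fBm with $H=1/6$ one has $\delta^{2}(s,s+\varepsilon)$ of order $\varepsilon^{1/3}$ on the compact interval (reading $X$, equivalently $G$, as extended slightly past $T$ in the usual way for regularization functionals), so this bound is immediate. The algebraic heart of the lemma is nothing more than the chaos expansion of a cube; everything else is bookkeeping.
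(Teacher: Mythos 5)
Your proof is correct and follows essentially the same route the paper indicates: it invokes the identity $I_{1}(f)^{3}=3\left\vert f\right\vert _{L^{2}}^{2}I_{1}(f)+I_{3}(f\otimes f\otimes f)$ (the paper's Lemma on cube chaos, equivalently two applications of the multiplication formula), integrates in $s$, and uses the iterated-integral representation of $I_{3}$ with the factor $3!=6$ plus a stochastic Fubini. The paper leaves all details to the reader, so your added care about the Fubini justification and the $L^{2}$ bounds is a harmless (and welcome) elaboration rather than a divergence.
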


\begin{proof}
The proof of this lemma is elementary. It follows from two uses of the
multiplication formula for Wiener integrals \cite[Proposition 1.1.3]{Nbook},
for instance. It can also be obtained directly from Lemma \ref{cubechaos}
below, or using the It\^{o} formula technique employed further below in
finding an expression for $[X,m]_{\varepsilon}\left(  T\right)  $ in Step 0 of
the proof of Theorem \ref{MartThm} on page \pageref{MartThm}. All details are
left to the reader.
\end{proof}

The above lemma indicates the Wiener chaos decomposition of
$[X,3]_{\varepsilon}\left(  T\right)  $ into the term $\mathcal{I}_{1}$ of
line (\ref{firstchaos}) which is in the first Wiener chaos (i.e. a Gaussian
term), and the term $\mathcal{I}_{3}$ of line (\ref{thirdchaos}), in the third
Wiener chaos. The next two lemmas contain information on the behavior of each
of these two terms, as needed to prove Proposition \ref{ChaosConv}.

\begin{lemma}
\label{I1pro}The Gaussian term $\mathcal{I}_{1}$ converges to $0$ in
$L^{2}\left(  \Omega\right)  $ as $\varepsilon\rightarrow0$.
\end{lemma}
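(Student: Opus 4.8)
The plan is to compute $\mathbf{E}[\mathcal{I}_1^2]$ directly from the explicit first-chaos representation (\ref{firstchaos}) and show it tends to $0$. Since $\mathcal{I}_1 = \frac{3}{\varepsilon}\int_0^T ds\, \big(\int_0^T \Delta G_s(u)\,dW(u)\big)\,\|\Delta G_s\|_{L^2}^2$, and the factor $\int_0^T \Delta G_s(u)\,dW(u)$ is exactly the Gaussian increment $\Delta X_s = X(s+\varepsilon)-X(s)$, while $\|\Delta G_s\|_{L^2}^2 = \mathbf{E}[(\Delta X_s)^2] = \delta^2(\varepsilon) = \varepsilon^{2H}$ by the homogeneity of fBm, we can rewrite $\mathcal{I}_1 = \frac{3\,\varepsilon^{2H}}{\varepsilon}\int_0^T \Delta X_s\, ds = 3\varepsilon^{2H-1}\int_0^T (X(s+\varepsilon)-X(s))\, ds$. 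Thus
\[
\mathbf{E}[\mathcal{I}_1^2] = 9\,\varepsilon^{4H-2}\int_0^T\int_0^T \Theta^\varepsilon(s,t)\, ds\, dt,
\]
where $\Theta^\varepsilon(s,t) = \mathbf{E}[\Delta X_s\,\Delta X_t]$ is the quantity from (\ref{seeplanar})–(\ref{defDelta}).

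Next I would estimate the double integral $\int_0^T\int_0^T \Theta^\varepsilon(s,t)\, ds\, dt$. Using Fubini and the representation $\Delta X_s = \int_s^{s+\varepsilon} \dot X_r\, dr$ heuristically — or more rigorously, integrating the increments — one sees that $\int_0^T \Delta X_s\, ds = \int_0^T (X(s+\varepsilon)-X(s))\, ds$ telescopes to $\int_T^{T+\varepsilon} X(r)\, dr - \int_0^\varepsilon X(r)\, dr$ (with the obvious convention extending $X$; alternatively, for fBm on $[0,T]$ one keeps the integral as is). Either way, $\int_0^T \Delta X_s\, ds$ is a centered Gaussian random variable whose variance is $O(\varepsilon^2)$ plus boundary terms of order $O(\varepsilon^2)$: indeed $\mathrm{Var}\big(\int_0^T \Delta X_s\,ds\big) = \int_0^T\int_0^T \Theta^\varepsilon(s,t)\,ds\,dt$, and since $|\Theta^\varepsilon(s,t)| \le \delta(\varepsilon)^2 = \varepsilon^{2H}$ pointwise by Cauchy–Schwarz, while for $|s-t|$ large compared to $\varepsilon$ one has the sharper bound $|\Theta^\varepsilon(s,t)| \le C\,\varepsilon^2 |s-t|^{2H-2}$ (obtained by Taylor-expanding $R_H$ in (\ref{fBmcov}) twice, as the planar increment over an $\varepsilon\times\varepsilon$ square of a $C^2$-away-from-diagonal function), a direct splitting of the region into $\{|s-t|\le\varepsilon\}$ and its complement gives $\int_0^T\int_0^T |\Theta^\varepsilon(s,t)|\,ds\,dt \le C\varepsilon^{2H}\cdot\varepsilon + C\varepsilon^2\int_\varepsilon^T r^{2H-2}\,dr \le C\varepsilon^{1+2H} + C\varepsilon^2 + C\varepsilon^{2H+1}$, hence $O(\varepsilon^{\min(2,\,1+2H)}) = O(\varepsilon^{1+2H})$ since $H = 1/6 < 1/2$.

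Combining, $\mathbf{E}[\mathcal{I}_1^2] \le C\,\varepsilon^{4H-2}\cdot\varepsilon^{1+2H} = C\,\varepsilon^{6H-1}$, which tends to $0$ as $\varepsilon\to 0$ precisely when $H > 1/6$ — but at $H = 1/6$ we get $\mathbf{E}[\mathcal{I}_1^2] \le C\varepsilon^0 = C$, only boundedness, not decay. So the crude bound is not quite enough at the threshold, and the main obstacle is to extract the genuine cancellation. The fix is to not bound $\Theta^\varepsilon$ by its absolute value but to exploit that $\Theta^\varepsilon(s,t)$ changes sign and is, up to the factor $-\tfrac12$, the second mixed difference of $\delta^2$; more cleanly, use the telescoping identity above to write $\int_0^T \Delta X_s\, ds$ as a difference of two averages of $X$ over intervals of length $\varepsilon$, whose variance one computes exactly from $R_H$ to be $O(\varepsilon^2)$ (the boundary averages $\varepsilon^{-1}\int_0^\varepsilon X$ are $O(1)$ in $L^2$, so their $\varepsilon$-scaled versions $\int_0^\varepsilon X\,dr$ are $O(\varepsilon)$ in $L^2$, giving variance $O(\varepsilon^2)$, not merely $O(\varepsilon^{1+2H})$). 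With the improved bound $\int_0^T\int_0^T\Theta^\varepsilon(s,t)\,ds\,dt = O(\varepsilon^2)$ one obtains $\mathbf{E}[\mathcal{I}_1^2] \le C\varepsilon^{4H-2}\cdot\varepsilon^2 = C\varepsilon^{4H} = C\varepsilon^{2/3} \to 0$, which is the desired conclusion. I expect the delicate point to be justifying the telescoping/boundary-term analysis rigorously on the finite interval $[0,T]$ (where $X$ is only defined up to $T$, so one must treat $\int_0^T(X(s+\varepsilon)-X(s))\,ds$ as $\int_{[T,T+\varepsilon]}X - \int_{[0,\varepsilon]}X$ only after noting $X$ is defined on $[0,T+\varepsilon]$ or truncating appropriately), but this is a routine endpoint bookkeeping issue rather than a conceptual one.
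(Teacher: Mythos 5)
Your argument is correct and, after the instructive but unnecessary detour through the crude bound $|\Theta^{\varepsilon}|\leq C\varepsilon^{2}|s-t|^{2H-2}$, lands exactly on the paper's proof: write $\mathbf{E}[\mathcal{I}_{1}^{2}]$ as $cst\cdot\varepsilon^{4H-2}\,Var[\int_{0}^{T}(X(s+\varepsilon)-X(s))\,ds]$, telescope the inner integral to $\int_{T}^{T+\varepsilon}X-\int_{0}^{\varepsilon}X$, and bound each boundary term's variance by $O(\varepsilon^{2})$ to get $O(\varepsilon^{4H})$. The endpoint issue you flag is indeed routine ($X(s+\varepsilon)$ for $s\leq T$ already presupposes $X$ is defined on $[0,T+\varepsilon]$), and the paper handles it exactly as you suggest.
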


\begin{lemma}
\label{I3pro}The 3rd chaos term $\mathcal{I}_{3}$ is bounded in $L^{2}\left(
\Omega\right)  $ for all $\varepsilon>0$, and does not converge in
$L^{2}\left(  \Omega\right)  $ as $\varepsilon\rightarrow0$.
\end{lemma}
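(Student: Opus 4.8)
My plan is to prove the two assertions separately: the uniform $L^{2}$-bound by a direct computation of $\mathbf{E}[\mathcal{I}_{3}^{2}]$ at the threshold $H=1/6$, and the non-convergence by a soft argument forcing any $L^{2}$-limit of $\mathcal{I}_{3}$ to be at once a third-chaos element and a non-degenerate Gaussian. For the bound, observe that by (\ref{thirdchaos}) the term $\mathcal{I}_{3}$ is the triple Wiener--It\^{o} integral $I_{3}(f_{\varepsilon})$ of the symmetric kernel $f_{\varepsilon}(u_{1},u_{2},u_{3})=\varepsilon^{-1}\int_{0}^{T}\prod_{k=1}^{3}\Delta G_{s}(u_{k})\,ds$, so the isometry $\mathbf{E}[I_{3}(f)^{2}]=3!\,\|f\|_{L^{2}([0,T]^{3})}^{2}$ together with $\int_{0}^{T}\Delta G_{s}(u)\Delta G_{t}(u)\,du=\Theta^{\varepsilon}(s,t)$ gives
\[
\mathbf{E}\left[\mathcal{I}_{3}^{2}\right]=\frac{6}{\varepsilon^{2}}\int_{0}^{T}\int_{0}^{T}\Theta^{\varepsilon}(s,t)^{3}\,ds\,dt.
\]
For an fBm with $H=1/6$ one has $\delta^{2}(s,t)=|t-s|^{1/3}$, so (\ref{seeplanar}) shows that, writing $r=t-s$, $\Theta^{\varepsilon}(s,t)=\phi_{\varepsilon}(r):=\tfrac12\big(|r+\varepsilon|^{1/3}+|r-\varepsilon|^{1/3}-2|r|^{1/3}\big)$ depends on $(s,t)$ only through $t-s$. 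Using $\int_{0}^{T}\!\int_{0}^{T}\phi_{\varepsilon}(|t-s|)^{3}\,ds\,dt=2\int_{0}^{T}(T-r)\phi_{\varepsilon}(r)^{3}\,dr$ and rescaling $r=\varepsilon y$, so that $\phi_{\varepsilon}(\varepsilon y)=\varepsilon^{1/3}\psi(y)$ with $\psi(y)=\tfrac12\big(|y+1|^{1/3}+|y-1|^{1/3}-2|y|^{1/3}\big)$, we obtain
\[
\mathbf{E}\left[\mathcal{I}_{3}^{2}\right]\le\frac{12T}{\varepsilon^{2}}\int_{0}^{T}|\phi_{\varepsilon}(r)|^{3}\,dr=12T\int_{0}^{T/\varepsilon}|\psi(y)|^{3}\,dy\le 12T\int_{0}^{\infty}|\psi(y)|^{3}\,dy.
\]
The function $\psi$ is bounded and continuous on $[0,\infty)$, and a Taylor expansion of $x\mapsto x^{1/3}$ gives $\psi(y)\sim-\tfrac19\,y^{-5/3}$ as $y\to\infty$; this decay rate — which is exactly what the exponent $2H=1/3$ at $H=1/6$ provides — makes $|\psi|^{3}$ integrable on $(0,\infty)$, so $\sup_{0<\varepsilon\le\varepsilon_{0}}\mathbf{E}[\mathcal{I}_{3}^{2}]<\infty$ (the quantity being in any case finite for each fixed $\varepsilon>0$).

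For the non-convergence in $L^{2}(\Omega)$, suppose towards a contradiction that $\mathcal{I}_{3}\to\mathcal{I}_{3}^{\infty}$ in $L^{2}(\Omega)$ as $\varepsilon\to0$. The third Wiener chaos is a closed subspace of $L^{2}(\Omega)$, so $\mathcal{I}_{3}^{\infty}$ belongs to it. On the other hand, by \cite[Theorem 4.1 part (2)]{GNRV} the sum $[X,3]_{\varepsilon}(T)=\mathcal{I}_{1}+\mathcal{I}_{3}$ converges in law to a centered non-degenerate Gaussian $N$, while $\mathcal{I}_{1}\to0$ in $L^{2}(\Omega)$ by Lemma~\ref{I1pro}; hence $\mathcal{I}_{3}\to N$ in law, and by uniqueness of the limit in law $\mathcal{I}_{3}^{\infty}$ has the law of $N$, i.e. is a non-degenerate Gaussian. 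But a non-zero element of a Wiener chaos of order $\geq2$ is never Gaussian: for $F=I_{3}(f)\neq0$ the fourth-moment identity reads $\mathbf{E}[F^{4}]=3\,\mathbf{E}[F^{2}]^{2}+\sum_{p=1}^{2}c_{p}\,\|f\otimes_{p}f\|^{2}$ with $c_{p}>0$, and $f\otimes_{2}f\equiv0$ would force $\|f\|_{L^{2}([0,T]^{3})}^{2}=\int_{0}^{T}(f\otimes_{2}f)(x,x)\,dx=0$; hence $\mathbf{E}[F^{4}]>3\,\mathbf{E}[F^{2}]^{2}$, which excludes the Gaussian law. This contradiction shows that $\mathcal{I}_{3}$ has no limit in $L^{2}(\Omega)$.

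The uniform $L^{2}$-bound just established moreover yields the stronger statement used for Proposition~\ref{ChaosConv}: by the hypercontractivity of Wiener chaos all $L^{p}$-norms are equivalent on the fixed third chaos, so if $\mathcal{I}_{3}$ converged in probability along some sequence $\varepsilon_{n}\to0$ the uniform bound would make $\{\mathcal{I}_{3}^{2}\}$ uniformly integrable and promote this to convergence in $L^{2}(\Omega)$, which has just been excluded; hence $\mathcal{I}_{3}$ has no limit in probability either. The one genuinely delicate step in all of this is the uniform bound on $\mathbf{E}[\mathcal{I}_{3}^{2}]$, i.e. checking that the singular kernel $\varepsilon^{-2}\Theta^{\varepsilon}(\cdot,\cdot)^{3}$ integrates to $O(1)$ at the critical exponent $H=1/6$ (the scaling $r=\varepsilon y$ plus integrability of $|\psi|^{3}$ at infinity); the passage to the law of $N$ and the non-Gaussianity of third-chaos elements are then soft, the conceptual point being that a Gaussian limit cannot live in the third chaos, so convergence can only take place in distribution.
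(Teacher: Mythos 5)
Your proof is correct, and its overall architecture coincides with the paper's: express $\mathbf{E}[\mathcal{I}_{3}^{2}]$ via the third-chaos isometry as $\tfrac{6}{\varepsilon^{2}}\iint \Theta^{\varepsilon}(s,t)^{3}\,ds\,dt$, establish a uniform bound, and then combine the convergence in law of $[X,3]_{\varepsilon}(T)$ to a non-degenerate Gaussian (from \cite{GNRV}) with Lemma \ref{I1pro} and the closedness of the third Wiener chaos to rule out $L^{2}$-convergence. Where you genuinely diverge is in the technical core, the uniform bound: the paper splits $[0,T]^{2}$ into a diagonal strip $\{|t-s|\le 2\varepsilon\}$ and its complement, handles the strip by an exact change of variables and the complement by two applications of the mean value theorem to $x\mapsto x^{2H}$; you instead exploit stationarity of increments and exact $2H$-homogeneity to rescale $r=\varepsilon y$ once and for all, reducing everything to $\int_{0}^{\infty}|\psi(y)|^{3}\,dy<\infty$ with $\psi(y)\sim -\tfrac19 y^{-5/3}$. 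Your route is cleaner and makes transparent why $H=1/6$ is critical (the prefactor $\varepsilon^{6H-1}$ is exactly $1$), at the cost of leaning on the exact power-law form of $\delta^{2}$ — though the paper's MVT argument uses that too, so nothing is lost. You also supply a proof (the fourth-moment inequality $\mathbf{E}[F^{4}]>3\,\mathbf{E}[F^{2}]^{2}$ for $0\neq F=I_{3}(f)$) of the fact that a nonzero third-chaos element cannot be Gaussian, which the paper merely asserts; the only loose point there is writing $\|f\|^{2}=\int (f\otimes_{2}f)(x,x)\,dx$, since the diagonal trace of an $L^{2}$ kernel is not defined pointwise — the clean statement is that $f\otimes_{2}f$ is the kernel of a positive trace-class operator with trace $\|f\|^{2}$, so its vanishing forces $f=0$. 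This is a standard fact and does not affect the validity of your argument.
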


\begin{proof}
[Proof of Proposition \ref{ChaosConv}]We prove the proposition by
contradiction. Assume $[X,3]_{\varepsilon}\left(  T\right)  $ converges in
probability. For any $p>2$, there exists $c_{p}$ depending only on $p$ such
that $\mathbf{E}\left[  \left\vert \mathcal{I}_{1}\right\vert ^{p}\right]
\leq c_{p}\left(  \mathbf{E}\left[  \left\vert \mathcal{I}_{1}\right\vert
^{2}\right]  \right)  ^{p/2}$ and $\mathbf{E}\left[  \left\vert \mathcal{I}%
_{3}\right\vert ^{p}\right]  \leq c_{p}\left(  \mathbf{E}\left[  \left\vert
\mathcal{I}_{3}\right\vert ^{2}\right]  \right)  ^{p/2}$; this is a general
fact about random variables in fixed Wiener chaos, and can be proved directly
using Lemma \ref{X3eExact} and the Burkh\"{o}lder-Davis-Gundy inequalities.
Therefore, since we have $\sup_{\varepsilon>0}(\mathbf{E}\left[  \left\vert
\mathcal{I}_{1}\right\vert ^{2}\right]  +\mathbf{E}\left[  \left\vert
\mathcal{I}_{3}\right\vert ^{2}\right]  )<\infty$ by Lemmas \ref{I1pro} and
\ref{I3pro}, we also get $\sup_{\varepsilon>0}(\mathbf{E}\left[  \left\vert
\mathcal{I}_{1}+\mathcal{I}_{3}\right\vert ^{p}\right]  )<\infty$ for any $p$.
Therefore, by uniform integrability, $[X,3]_{\varepsilon}\left(  T\right)
=\mathcal{I}_{1}+\mathcal{I}_{3}$ converges in $L^{2}\left(  \Omega\right)  $.
In $L^{2}\left(  \Omega\right)  $, the terms $\mathcal{I}_{1}$ and
$\mathcal{I}_{3}$ are orthogonal. Therefore, $\mathcal{I}_{1}$ and
$\mathcal{I}_{3}$ must converge in $L^{2}\left(  \Omega\right)  $ separately.
This contradicts the non-convergence of $\mathcal{I}_{3}$ in $L^{2}\left(
\Omega\right)  $ obtained in Lemma \ref{I3pro}. Thus $[X,3]_{\varepsilon
}\left(  T\right)  $ does not converge in probability.
\end{proof}

To conclude this section, we only need to prove the above two lemmas. To
improve readability, we write $H$ instead of $1/6$.

\begin{proof}
[Proof of Lemma \ref{I1pro}]Reintroducing the notation $X$ and $\Theta$ into
the formula in Lemma \ref{X3eExact}, we get%
\[
\mathcal{I}_{1}=\frac{3}{\varepsilon} \int_{0}^{T}ds
\left(  X\left(  s+\varepsilon\right)  -X\left(  s\right)  \right)  Var\left(
X\left(  s+\varepsilon\right)  -X\left(  s\right)  \right)
\]
and therefore,
\[
\mathbf{E}\left[  \left\vert \mathcal{I}_{1}\right\vert ^{2}\right]  =\frac
{9}{\varepsilon^{2}}\int_{0}^{T}\int_{0}^{t}dtds\Theta^{\varepsilon}\left(
s,t\right)  Var\left(  X\left(  t+\varepsilon\right)  -X\left(  t\right)
\right)  Var\left(  X\left(  s+\varepsilon\right)  -X\left(  s\right)
\right)
\]
We note here that $\mathbf{E}\left[  \left\vert \mathcal{I}_{1}\right\vert
^{2}\right]  $ coincides with what we called $J_{1}$ in Lemma \ref{lemma1},
but we will not use this fact here. Instead, using the variances of fBm,%
\begin{align*}
\mathbf{E}\left[  \left\vert \mathcal{I}_{1}\right\vert ^{2}\right]   &
=\frac{9}{2}\varepsilon^{-2+4H}\int_{0}^{T}\int_{0}^{T}dtds~Cov\left[
X\left(  t+\varepsilon\right)  -X\left(  t\right)  ;X\left(  s+\varepsilon
\right)  -X\left(  s\right)  \right] \\
&  =\frac{9}{2}\varepsilon^{-2+4H}~Var\left[  \int_{0}^{T}\left(  X\left(
t+\varepsilon\right)  -X\left(  t\right)  \right)  dt\right] \\
&  =\frac{9}{2}\varepsilon^{-2+4H}~Var\left[  \int_{T}^{T+\varepsilon}X\left(
t\right)  dt-\int_{0}^{\varepsilon}X\left(  t\right)  dt\right]  .
\end{align*}
Bounding the variance of the difference by twice the sum of the variances, and
using the fBm covariance formula (\ref{fBmcov}),%
\begin{align*}
\mathbf{E}\left[  \left\vert \mathcal{I}_{1}\right\vert ^{2}\right]   &
\leq9\varepsilon^{-2+4H}\left(  \int_{T}^{T+\varepsilon}\int_{T}%
^{T+\varepsilon}R_{H}\left(  s,t\right)  dsdt+\int_{0}^{\varepsilon}\int
_{0}^{\varepsilon}R_{H}\left(  s,t\right)  dsdt\right) \\
&  \leq9\varepsilon^{-2+4H}\left(  \varepsilon^{2}\left(  T+\varepsilon
\right)  ^{2H}+\varepsilon^{2+2H}\right)  =O\left(  \varepsilon^{4H}\right)  ,
\end{align*}
proving Lemma \ref{I1pro}.
\end{proof}

\begin{proof}
[Proof of Lemma \ref{I3pro}]By the proof of Lemma \ref{X3eExact}, and using
the covariance formula (\ref{fBmcov}) for fBm, we first get
\begin{align*}
\mathbf{E}\left[  \left\vert \mathcal{I}_{3}\right\vert ^{2}\right]   &
=\frac{12}{\varepsilon^{2}}\int_{0}^{T}\int_{0}^{t}dtds~\left(  \Theta
^{\varepsilon}\left(  s,t\right)  \right)  ^{3}\\
&  =\frac{6}{\varepsilon^{2}}\int_{0}^{T}\int_{0}^{t}dtds~\left(  \left\vert
t-s+\varepsilon\right\vert ^{2H}+\left\vert t-s-\varepsilon\right\vert
^{2H}-2\left\vert t-s\right\vert ^{2H}\right)  ^{3}.
\end{align*}
Again, this expression coincides with the term $J_{0}$ from Lemma
\ref{lemma1}, but this will not be used in this proof. We must take care of
the absolute values, i.e. of whether $\varepsilon$ is greater or less than
$t-s$. We define the \textquotedblleft off-diagonal\textquotedblright\ portion
of $\mathbf{E}\left[  \left\vert \mathcal{I}_{3}\right\vert ^{2}\right]  $ as
\[
\mathcal{ODI}_{3}:=6\varepsilon^{-2}\int_{2\varepsilon}^{T}\int_{0}%
^{t-2\varepsilon}dtds\left(  \left\vert t-s+\varepsilon\right\vert
^{2H}+\left\vert t-s-\varepsilon\right\vert ^{2H}-2\left\vert t-s\right\vert
^{2H}\right)  ^{3}.
\]
For $s,t$ in the integration domain for the above integral, since $\bar
{t}:=t-s>2\varepsilon$, by two iterated applications of the Mean Value Theorem
for the function $x^{2H}$ on the intervals $[\bar{t}-\varepsilon,\bar{t}]$ and
$[\bar{t},\bar{t}+\varepsilon]$,%
\[
\left\vert \bar{t}+\varepsilon\right\vert ^{2H}+\left\vert \bar{t}%
-\varepsilon\right\vert ^{2H}-2\bar{t}^{2H}=2H\left(  2H-1\right)
\varepsilon\left(  \xi_{1}-\xi_{2}\right)  \xi^{2H-2}%
\]
for some $\xi_{2}\in\lbrack\bar{t}-\varepsilon,\bar{t}],$ $\xi_{1}\in
\lbrack\bar{t},\bar{t}+\varepsilon]$, and $\xi\in\lbrack\xi_{1},\xi_{2}]$, and
therefore
\begin{align*}
\left\vert \mathcal{ODI}_{3}\right\vert  &  \leq384H^{3}\left\vert
2H-1\right\vert ^{3}\varepsilon^{-2}\int_{2\varepsilon}^{T}\int_{0}%
^{t-2\varepsilon}\left(  \varepsilon\cdot2\varepsilon\cdot\left(
t-s-\varepsilon\right)  ^{2H-2}\right)  ^{3}dtds\\
&  =384H^{3}\left\vert 2H-1\right\vert ^{3}\varepsilon^{4}\int_{2\varepsilon
}^{T}\int_{0}^{t-2\varepsilon}\left(  t-s-\varepsilon\right)  ^{6H-6}dtds\\
&  =\frac{384H^{3}\left\vert 2H-1\right\vert ^{3}\varepsilon^{4}}{5-6H}%
\int_{2\varepsilon}^{T}\left[  \varepsilon^{6H-5}-\left(  t-\varepsilon
\right)  ^{6H-5}\right]  dt\\
&  \leq\frac{384H^{3}\left\vert 2H-1\right\vert ^{3}}{5-6H}T\varepsilon
^{6H-1}=\frac{384H^{3}\left\vert 2H-1\right\vert ^{3}}{5-6H}T=\frac{32}{243}T.
\end{align*}
where in the last line we substituted $H=1/6$. Thus the \textquotedblleft
off-diagonal\textquotedblright\ term is bounded. The diagonal part of
$\mathcal{I}_{3}$ is%
\begin{align*}
\mathcal{DI}_{3}  &  :=6\varepsilon^{-2}\int_{0}^{T}\int_{t-2\varepsilon}%
^{t}dtds\left(  \left\vert t-s+\varepsilon\right\vert ^{2H}+\left\vert
t-s-\varepsilon\right\vert ^{2H}-2\left\vert t-s\right\vert ^{2H}\right)
^{3}\\
&  =6\varepsilon^{-2}T\int_{0}^{2\varepsilon}d\bar{t}\left(  \left\vert
\bar{t}+\varepsilon\right\vert ^{2H}+\left\vert \bar{t}-\varepsilon\right\vert
^{2H}-2\left\vert \bar{t}\right\vert ^{2H}\right)  ^{3}\\
&  =6\varepsilon^{-1+6H}T\int_{0}^{2}dr\left(  \left\vert r+1\right\vert
^{2H}+\left\vert r-1\right\vert ^{2H}-2\left\vert r\right\vert ^{2H}\right)
^{3}dr=CT
\end{align*}
where, having substituted $H=1/6$, yields that $C$ is a universal constant.
Thus the diagonal part $\mathcal{DI}_{3}$ of $\mathbf{E}[\mathbf{|}%
\mathcal{I}_{3}|^{2}]$ is constant. This proves that $\mathcal{I}_{3}$ is
bounded in $L^{2}\left(  \Omega\right)  $, as announced. To conclude that it
cannot converge in $L^{2}\left(  \Omega\right)  $, recall that from
\cite[Theorem 4.1 part (2)]{GNRV}, $[X,3]_{\varepsilon}\left(  T\right)
=\mathcal{I}_{1}+\mathcal{I}_{3}$ converges in distribution to a
non-degenerate normal law. By Lemma \ref{I1pro}, $\mathcal{I}_{1}$ converges
to $0$ in $L^{2}\left(  \Omega\right)  $. Therefore, $\mathcal{I}_{3}$
converges in distribution to a non-degenerate normal law; if it also converged
in $L^{2}\left(  \Omega\right)  $, since the 3rd Wiener chaos is closed in
$L^{2}\left(  \Omega\right)  $, the limit would have to be in that same chaos,
and thus would not have a non-degenerate normal law. This concludes the proof
of Lemma \ref{I3pro}.
\end{proof}

\subsection{The homogeneous case\label{HomogGaussSect}}

We now study the homogeneous case in detail. We are ready to prove a necessary
and sufficient condition for having a zero $m$-th variation when $m$ is an odd integer.

\begin{theorem}
\label{HomogGauss}Let $m>1$ be an odd integer. Let $X$ be a centered Gaussian
process on $[0,T]$ with homogeneous increments; its canonical metric is%
\[
\delta^{2}\left(  s,t\right)  :=\mathbf{E}\left[  \left(  X\left(  t\right)
-X\left(  s\right)  \right)  ^{2}\right]  =\delta^{2}\left(  \left\vert
t-s\right\vert \right)
\]
where the univariate function $\delta^{2}$ is assumed to be increasing and
concave on $[0,T]$. Then $X$ has zero $m$th variation if and only if
$\delta\left(  r\right)  =o\left(  r^{1/\left(  2m\right)  }\right)  $.
\end{theorem}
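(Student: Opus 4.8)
The plan is to work directly with the moment formula from Lemma~\ref{lemma1}, specialized to the homogeneous Gaussian case. Write $q^{2}(\varepsilon):=\mathrm{Var}[X(s+\varepsilon)-X(s)]=\delta^{2}(\varepsilon)$, which is now independent of $s$, and recall from (\ref{seeplanar})--(\ref{defDelta}) that
\[
\Theta^{\varepsilon}(s,t)=-\tfrac12\,\Delta_{(s,t);(s+\varepsilon,t+\varepsilon)}\delta^{2}
=\tfrac12\left(\delta^{2}(|t-s|+\varepsilon)+\delta^{2}(|t-s|-\varepsilon)-2\delta^{2}(|t-s|)\right)
\]
once $|t-s|\ge\varepsilon$, with an obvious modification on the diagonal strip $|t-s|<\varepsilon$. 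So $\mathbf{E}[([X,m]_{\varepsilon}(T))^{2}]=\sum_{j=0}^{(m-1)/2}J_{j}$ where
\[
J_{j}=\frac{c_{j}}{\varepsilon^{2}}\,\delta^{2j}(\varepsilon)\int_{0}^{T}\!\!\int_{0}^{t}\Theta^{\varepsilon}(s,t)^{m-2j}\,ds\,dt .
\]
The idea for sufficiency is that the top term $J_{0}$, involving $\Theta^{\varepsilon}(s,t)^{m}$, is the dominant and most delicate one; each lower $J_{j}$ carries a factor $\delta^{2j}(\varepsilon)$ which, under the hypothesis $\delta(\varepsilon)=o(\varepsilon^{1/(2m)})$, is $o(\varepsilon^{j/m})$, and this extra smallness more than compensates for the smaller power of $\Theta^{\varepsilon}$ inside. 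So the whole argument reduces to a good estimate on $\int_{0}^{T}\!\int_{0}^{t}|\Theta^{\varepsilon}(s,t)|^{k}\,ds\,dt$ for $k=m-2j$, $1\le k\le m$.

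For that estimate I would split the $(s,t)$ square into the diagonal strip $D_{\varepsilon}=\{0<t-s<\varepsilon\}$ (plus a symmetric contribution near $t-s$ small) and the off-diagonal region. On the diagonal strip, change variables to $\bar t=t-s$ and use $|\Theta^{\varepsilon}(s,t)|\le C\delta^{2}(\varepsilon)$ (from concavity, the second difference of $\delta^{2}$ over a window of size $\varepsilon$ is bounded by $\delta^{2}(\varepsilon)$ up to a constant, and on the strip one gets at worst $\delta^2$ of something of order $\varepsilon$), so the strip contributes at most $C\,T\,\varepsilon\,\delta^{2k}(\varepsilon)$, and after the prefactor $\varepsilon^{-2}\delta^{2j}(\varepsilon)$ this is $O(\varepsilon^{-1}\delta^{2m}(\varepsilon))=o(1)$ by the hypothesis. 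Off the diagonal, where $\bar t:=t-s\ge\varepsilon$, I would apply the Mean Value Theorem twice to $x\mapsto\delta^{2}(x)$ exactly as in the fBm computation in the proof of Lemma~\ref{I3pro}: $\Theta^{\varepsilon}(s,t)=-\tfrac12\varepsilon^{2}(\delta^{2})''(\xi)$ for some $\xi\in[\bar t-\varepsilon,\bar t+\varepsilon]$, so that $|\Theta^{\varepsilon}(s,t)|\le C\varepsilon^{2}\sup_{[\bar t/2,2\bar t]}|(\delta^{2})''|$. Here concavity gives $(\delta^{2})''\le0$, and one needs the integrability of $|(\delta^2)''|$ against the relevant power; since $\delta^2$ is concave and increasing, $\int|(\delta^2)''(u)|\,du$-type quantities are controlled by $\delta^2$ itself at the endpoints via monotonicity of the derivative, and raising to the power $k$ and integrating in $\bar t$ over $[\varepsilon,T]$ produces, after multiplication by $\varepsilon^{-2+2k}$, a bound that tends to $0$. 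The cleanest way to organize this uniformly in $k$ is to first prove the pointwise bound $|\Theta^{\varepsilon}(s,t)|\le C\,\delta^{2}(\varepsilon)\wedge\big(\varepsilon\,|(\delta^{2})'(\bar t-\varepsilon)-(\delta^{2})'(\bar t)|\big)$ using concavity, then dominate $|\Theta^{\varepsilon}|^{k}\le(\delta^2(\varepsilon))^{k-1}|\Theta^\varepsilon|$ and integrate the single power of $|\Theta^\varepsilon|$, which telescopes.

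For the converse (necessity), suppose $\delta(r)\ne o(r^{1/(2m)})$; then there is a sequence $\varepsilon_{n}\downarrow0$ and $c>0$ with $\delta^{2}(\varepsilon_{n})\ge c\,\varepsilon_{n}^{1/m}$. It suffices to show $\mathbf{E}[([X,m]_{\varepsilon_{n}}(T))^{2}]\not\to0$, and for this I would use positivity of the diagonal contribution: by concavity of $\delta^{2}$, the second difference $\delta^{2}(\bar t+\varepsilon)+\delta^{2}(\bar t-\varepsilon)-2\delta^{2}(\bar t)$ is $\le0$, so $\Theta^{\varepsilon}(s,t)\ge0$ on the off-diagonal region, and more importantly on the diagonal strip $\Theta^{\varepsilon}(s,s+\bar t)$ for $\bar t\in(0,\varepsilon)$ is, up to constants, comparable to $\delta^{2}(\varepsilon)$ — indeed $\Theta^\varepsilon(s,s)=\delta^2(\varepsilon)/2$ wait, more precisely at $\bar t=0$ one gets $\delta^{2}(\varepsilon)$ times a positive constant. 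Since in $J_{j}$ all the factors $\delta^{2j}(\varepsilon)$, $\Theta^{\varepsilon}(s,t)^{m-2j}$ and $c_{j}$ are nonnegative on the diagonal strip (the $c_j$ from the Gaussian moment formula being positive), there is no cancellation, and the diagonal strip alone forces
\[
\mathbf{E}\big[([X,m]_{\varepsilon}(T))^{2}\big]\ \ge\ \frac{c'}{\varepsilon^{2}}\cdot T\varepsilon\cdot\big(\delta^{2}(\varepsilon)\big)^{m}
=c'\,T\,\varepsilon^{-1}\delta^{2m}(\varepsilon)\ \ge\ c'\,T\,c^{m}>0
\]
along the sequence $\varepsilon_{n}$, contradicting zero $m$th variation. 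The main obstacle I anticipate is making the off-diagonal estimate for $J_{0}$ (the pure $\Theta^{m}$ term) genuinely go to zero using only \emph{concavity} of $\delta^{2}$, rather than a power-law form as for fBm; one has to be careful that the double Mean Value Theorem argument, combined with the possibly non-monotone behavior of $(\delta^2)''$, still yields an integrable bound, and the resolution is to integrate only one power of $|\Theta^\varepsilon|$ (which telescopes cleanly via the derivative of $\delta^2$) after pulling out $(\delta^2(\varepsilon))^{m-1}$, so that no second derivative needs to be integrated to a power greater than one.
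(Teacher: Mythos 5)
Your sufficiency argument is essentially the paper's: split into diagonal strip and off-diagonal region, bound the strip by $\varepsilon^{-1}\delta^{2m}(\varepsilon)$, and off the diagonal use the constant sign of $\Theta^{\varepsilon}$ to write $\left\vert \Theta^{\varepsilon}\right\vert ^{m-2j}\leq\delta^{2(m-2j-1)}(\varepsilon)\left\vert \Theta^{\varepsilon}\right\vert$ and integrate a single power of $\Theta^{\varepsilon}$, which telescopes via $d\delta^{2}$ to $O\left(  \varepsilon\delta^{2}\left(  2\varepsilon\right)  \right)$. That outline is correct and matches Steps 1--3 of the paper's proof (your detour through $(\delta^{2})''$ is unnecessary and unavailable under mere concavity, but you correctly abandon it for the telescoping).

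The necessity half has a genuine gap. First, the sign is backwards: off the diagonal, $\Theta^{\varepsilon}(s,t)=\tfrac12\left(  \delta^{2}(\bar t+\varepsilon)+\delta^{2}(\bar t-\varepsilon)-2\delta^{2}(\bar t)\right)$ \emph{is} the second difference of the concave function $\delta^{2}$, hence it is $\leq 0$, not $\geq 0$ as you assert. Since $m-2j$ is odd and the $c_{j}$ are positive, every off-diagonal contribution $J_{j,OD}$ is \emph{negative}, so your inequality $\sum_{j}J_{j}\geq\sum_{j}J_{j,D}$ goes the wrong way: the diagonal strip does not give a lower bound on the full expectation. Worse, the off-diagonal term is of the \emph{same order} $\varepsilon^{-1}\delta^{2m}(\varepsilon)$ as the diagonal term, so the cancellation is not a technicality one can wave away. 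The paper's proof of necessity (for $m=3$) must first establish via the Wiener chaos decomposition that each $J_{j}$ is a sum of squares and hence nonnegative, and then compute $J_{1,D}$ and $J_{1,OD}$ exactly by Fubini, obtaining
\[
J_{1}\ \geq\ \frac{9\delta^{4}\left(  \varepsilon\right)  }{\varepsilon^{2}}\left(  T-\varepsilon\right)  \int_{\varepsilon}^{2\varepsilon}\left(  \delta^{2}\left(  2\varepsilon\right)  -\delta^{2}\left(  r\right)  \right)  dr ,
\]
a quantity which for general concave $\delta^{2}$ can be much smaller than $\varepsilon^{-1}\delta^{6}(\varepsilon)$; one then needs the further argument that $\lim_{\varepsilon\to0}(f^{3})^{\prime}(\varepsilon)=0$ with $f(x)=\delta^{2}(2x)$ forces $\lim_{\varepsilon\to0}\varepsilon^{-1}f^{3}(\varepsilon)=0$. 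Your claimed direct lower bound $\mathbf{E}\bigl[([X,m]_{\varepsilon}(T))^{2}\bigr]\geq c^{\prime}T\varepsilon^{-1}\delta^{2m}(\varepsilon)$ is therefore not only unproved but false in general, and the necessity direction needs to be rebuilt along these lines.
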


\begin{proof}
\noindent\emph{Step 0: setup.} We denote by $d\delta^{2}$ the derivative, in
the sense of measures, of $\delta^{2}$; we know that $d\delta^{2}$ is a
positive bounded measure on $[0,T]$. Using homogeneity, we also get
\[
Var\left[  X\left(  t+\varepsilon\right)  -X\left(  t\right)  \right]
=\delta^{2}\left(  \varepsilon\right)  .
\]
Using the notation in Lemma \ref{lemma1}, we get%
\[
J_{j}=\varepsilon^{-2}\delta^{4j}\left(  \varepsilon\right)  c_{j}\int_{0}%
^{T}dt\int_{0}^{t}ds\Theta^{\varepsilon}\left(  s,t\right)  ^{m-2j}.
\]
\vspace{0.1in}

\noindent\emph{Step 1: diagonal. }Let us deal first with the diagonal term. We
define the $\varepsilon$-diagonal $D_{\varepsilon}:=\left\{  0\leq
t-\varepsilon<s<t\leq T\right\}  $. Trivially using Cauchy -Schwarz's
inequality, we have
\[
\left\vert \Theta^{\varepsilon}\left(  s,t\right)  \right\vert \leq
\sqrt{Var\left[  X\left(  t+\varepsilon\right)  -X\left(  t\right)  \right]
Var\left[  X\left(  s+\varepsilon\right)  -X\left(  s\right)  \right]
}=\delta^{2}\left(  \varepsilon\right)  .
\]
Hence, according to Lemma \ref{lemma1}, the diagonal portion $\sum
_{j=0}^{\left(  m-1\right)  /2}J_{j,D_{\varepsilon}}$ of $\mathbf{E}\left[
\left(  [X,m]_{\varepsilon}\left(  T\right)  \right)  ^{2}\right]  $ can be
bounded above, in absolute value, as:%
\begin{align*}
\sum_{j=0}^{\left(  m-1\right)  /2}J_{j,D_{\varepsilon}}  &  :=\sum
_{j=0}^{\left(  m-1\right)  /2}\varepsilon^{-2}\delta^{4j}\left(
\varepsilon\right)  c_{j}\int_{\varepsilon}^{T}dt\int_{t-\varepsilon}%
^{t}ds\Theta^{\varepsilon}\left(  s,t\right)  ^{m-2j}.\\
&  \leq\frac{1}{\varepsilon^{2}}\sum_{j=0}^{\left(  m-1\right)  /2}c_{j}%
\int_{\varepsilon}^{T}dt\int_{t-\varepsilon}^{t}ds\delta^{2m}\left(
\varepsilon\right)  \leq c\cdot\varepsilon^{-1}\delta^{2m}\left(
\varepsilon\right)
\end{align*}
where $cst$ denotes a constant whose value may change in the remainder of the
article's proofs (here it depends only on $\delta$ and $m$). The hypothesis on
$\delta^{2}$ implies that the above converges to $0$ as $\varepsilon$ tends to
$0$.\vspace{0.1in}

\noindent\emph{Step 2: small }$t$\emph{ term . }The term for $t\in
\lbrack0,\varepsilon]$ and any $s\in\lbrack0,t]$ can be dealt with similarly,
and is of a smaller order than the one in Step 1. Specifically we have%
\[
\left\vert J_{j,S}\right\vert :=\varepsilon^{-2}\delta^{4j}\left(
\varepsilon\right)  c_{j}\left\vert \int_{0}^{\varepsilon}dt\int_{0}%
^{t}ds\Theta^{\varepsilon}\left(  s,t\right)  ^{m-2j}\right\vert
\leq\varepsilon^{-2}\delta^{4j}\left(  \varepsilon\right)  c_{j}%
\delta^{2\left(  m-2j\right)  }\left(  \varepsilon\right)  \varepsilon
^{2}=c_{j}\delta^{2m}\left(  \varepsilon\right)  ,
\]
which converges to $0$ like $o\left(  \varepsilon\right)  $.\vspace{0.1in}

\noindent\emph{Step 3: off-diagonal. }Because of the homogeneity hypothesis,
we can calculate from (\ref{seeplanar}) that for any $s,t$ in the
$\varepsilon$-off diagonal set $OD_{\varepsilon}:=\left\{  0\leq
s<t-\varepsilon<t\leq T\right\}  $%
\begin{align}
\Theta^{\varepsilon}\left(  s,t\right)   &  =\left(  \delta^{2}\left(
t-s+\varepsilon\right)  -\delta^{2}\left(  t-s\right)  \right)  -\left(
\delta^{2}\left(  t-s\right)  -\delta^{2}\left(  t-s-\varepsilon\right)
\right) \nonumber\\
&  =\int_{t-s}^{t-s+\varepsilon}d\delta^{2}\left(  r\right)  -\int
_{t-s-\varepsilon}^{t-s}d\delta^{2}\left(  r\right)  . \label{Thetadiffdelta}%
\end{align}
By the concavity hypothesis, we see that $\Theta^{\varepsilon}\left(
s,t\right)  $ is negative in this off-diagonal set $OD_{\varepsilon}$.
Unfortunately, using the notation in Lemma \ref{lemma1}, this negativity does
not help us because the off-diagonal portion $J_{j,OD}$ of $J_{j}$ also
involves the constant $c_{j}$, which could itself be negative. Hence we need
to estimate $J_{j,OD}$ more precisely.

The constancy of the sign of $\Theta^{\varepsilon}$ is still useful, because
it enables our first operation in this step, which is to reduce the estimation
of $\left\vert J_{j,OD}\right\vert $ to the case of $j=\left(  m-1\right)
/2$. Indeed, using Cauchy-Schwarz's inequality and the fact that $\left\vert
\Theta^{\varepsilon}\right\vert =-\Theta^{\varepsilon}$, we write%
\begin{align*}
\left\vert J_{j,OD}\right\vert  &  =\varepsilon^{-2}\delta^{4j}\left(
\varepsilon\right)  \left\vert c_{j}\right\vert \int_{\varepsilon}^{T}%
dt\int_{0}^{t-\varepsilon}ds\left\vert \Theta^{\varepsilon}\left(  s,t\right)
\right\vert ^{m-2j}\\
&  =-\varepsilon^{-2}\delta^{4j}\left(  \varepsilon\right)  \left\vert
c_{j}\right\vert \int_{\varepsilon}^{T}dt\int_{0}^{t-\varepsilon}%
ds\Theta^{\varepsilon}\left(  s,t\right)  \left\vert \Theta^{\varepsilon
}\left(  s,t\right)  \right\vert ^{m-2j-1}\\
&  \leq\varepsilon^{-2}\delta^{4j}\left(  \varepsilon\right)  \left\vert
c_{j}\right\vert \int_{\varepsilon}^{T}dt\int_{0}^{t-\varepsilon}ds\left(
-\Theta^{\varepsilon}\left(  s,t\right)  \right)  \left\vert \delta^{2}\left(
\varepsilon\right)  \right\vert ^{m-2j-1}\\
&  =\varepsilon^{-2}\delta^{2m-2}\left(  \varepsilon\right)  \left\vert
c_{j}\right\vert \int_{\varepsilon}^{T}dt\int_{0}^{t-\varepsilon}ds\left(
-\Theta^{\varepsilon}\left(  s,t\right)  \right)  .
\end{align*}
It is now sufficient to show that the estimate for the case $j=\left(
m-1\right)  /2$ holds, i.e. that
\begin{equation}
\int_{\varepsilon}^{T}dt\int_{0}^{t-\varepsilon}ds\left(  -\Theta
^{\varepsilon}\left(  s,t\right)  \right)  \leq cst\cdot\varepsilon\delta
^{2}\left(  2\varepsilon\right)  \label{claimstep3}%
\end{equation}

We rewrite the planar increments of $\delta^{2}$ as in (\ref{Thetadiffdelta})
to show what cancellations occur: with the notation $s^{\prime}=t-s$,%
\[
-\Theta^{\varepsilon}\left(  s,t\right)  =-\left(  \delta^{2}\left(
s^{\prime}+\varepsilon\right)  -\delta^{2}\left(  s^{\prime}\right)  \right)
+\left(  \delta^{2}\left(  s^{\prime}\right)  -\delta^{2}\left(  s^{\prime
}-\varepsilon\right)  \right)  =-\int_{s^{\prime}}^{s^{\prime}+\varepsilon
}d\delta^{2}\left(  r\right)  +\int_{s^{\prime}-\varepsilon}^{s^{\prime}%
}d\delta^{2}\left(  r\right)  .
\]
Therefore, using the change of variables from $s$ to $s^{\prime}$, and another
to change $\left[  s^{\prime}-\varepsilon,s^{\prime}\right]  $ to $\left[
s^{\prime},s^{\prime}+\varepsilon\right]  $,%
\begin{align}
\int_{\varepsilon}^{T}dt\int_{0}^{t-\varepsilon}ds\left(  -\Theta
^{\varepsilon}\left(  s,t\right)  \right)   &  =\int_{\varepsilon}%
^{T}dt\left[  \int_{\varepsilon}^{t}ds^{\prime}\int_{s^{\prime}-\varepsilon
}^{s^{\prime}}d\delta^{2}\left(  r\right)  -\int_{\varepsilon}^{t}ds^{\prime
}\int_{s^{\prime}}^{s^{\prime}+\varepsilon}d\delta^{2}\left(  r\right)
\right] \nonumber\\
&  =\int_{\varepsilon}^{T}dt\left[  \int_{\varepsilon}^{t}ds^{\prime}%
\int_{s^{\prime}-\varepsilon}^{s^{\prime}}d\delta^{2}\left(  r\right)
-\int_{\varepsilon}^{t}ds^{\prime}\int_{s^{\prime}}^{s^{\prime}+\varepsilon
}d\delta^{2}\left(  r\right)  \right] \nonumber\\
&  =\int_{\varepsilon}^{T}dt\left[  \int_{0}^{t-\varepsilon}ds^{\prime\prime
}\int_{s^{\prime\prime}}^{s^{\prime\prime}+\varepsilon}d\delta^{2}\left(
r\right)  -\int_{\varepsilon}^{t}ds^{\prime}\int_{s^{\prime}}^{s^{\prime
}+\varepsilon}d\delta^{2}\left(  r\right)  \right] \nonumber\\
&  =\int_{\varepsilon}^{T}dt\left[  \int_{0}^{\varepsilon}ds^{\prime\prime
}\int_{s^{\prime\prime}}^{s^{\prime\prime}+\varepsilon}d\delta^{2}\left(
r\right)  -\int_{t-\varepsilon}^{t}ds^{\prime}\int_{s^{\prime}}^{s^{\prime
}+\varepsilon}d\delta^{2}\left(  r\right)  \right]  \label{usein43}%
\end{align}
We may now invoke the positivity of $d\delta^{2}$, to obtain%
\begin{align*}
&  \int_{\varepsilon}^{T}dt\int_{0}^{t-\varepsilon}ds\left(  -\Theta
^{\varepsilon}\left(  s,t\right)  \right)  \leq\int_{\varepsilon}^{T}%
dt\int_{0}^{\varepsilon}ds^{\prime\prime}\int_{s^{\prime\prime}}%
^{s^{\prime\prime}+\varepsilon}d\delta^{2}\left(  r\right) \\
&  =\int_{\varepsilon}^{T}dt\int_{0}^{\varepsilon}ds^{\prime\prime}\left(
\delta^{2}\left(  s^{\prime\prime}+\varepsilon\right)  -\delta^{2}\left(
s^{\prime\prime}\right)  \right)  \leq\int_{\varepsilon}^{T}dt\ \varepsilon
\ \delta^{2}\left(  2\varepsilon\right)  \leq T\varepsilon\delta^{2}\left(
2\varepsilon\right)  .
\end{align*}
This is precisely the claim in (\ref{claimstep3}), which finishes the proof
that for all $j$, $\left\vert J_{j,OD}\right\vert \leq cst\cdot\varepsilon
^{-1}\delta^{2m}\left(  2\varepsilon\right)  $ for some constant $cst$.
Combining this with the results of Steps 1 and 2, we obtain that%
\[
\mathbf{E}\left[  \left(  [X,m]_{\varepsilon}\left(  T\right)  \right)
^{2}\right]  \leq cst\cdot\varepsilon^{-1}\delta^{2m}\left(  2\varepsilon
\right)
\]
which implies the sufficient condition in the theorem.\vspace{0.1in}

\noindent\emph{Step 4: necessary condition.} The proof of this part is more
delicate than the above: it requires an excellent control of the off-diagonal
term, since it is negative and turns out to be of the same order of magnitude
as the diagonal term. We spell out the proof here for $m=3$. The general case
is similar, and is left to the reader.\vspace{0.1in}

\noindent\emph{Step 4.1: positive representation.} The next lemma uses the
following chaos integral notation: for any $n\in\mathbf{N}$, for $g\in
L^{2}\left(  [0,T]^{n}\right)  $, $g$ symmetric in its $n$ variables, then
$I_{n}\left(  g\right)  $ is the multiple Wiener integral of $g$ over
$[0,T]^{n}$ with respect to $W$. This lemma's elementary proof is left to the reader.

\begin{lemma}
\label{cubechaos}Let $f\in L^{2}\left(  [0,T]\right)  $. Then $I_{1}\left(
f\right)  ^{3}=3\left\vert f\right\vert _{L^{2}\left(  [0,T]\right)  }%
^{2}I_{1}\left(  f\right)  +I_{3}\left(  f\otimes f\otimes f\right)  $
\end{lemma}

Using this lemma, as well as definitions (\ref{defX}) and (\ref{defXm}),
recalling the notation $\Delta G_{s}\left(  u\right)  :=G\left(
s+\varepsilon,u\right)  -G\left(  s,u\right)  $ already used in Lemma
\ref{X3eExact}, and exploiting the fact that the covariance of two multiple
Wiener integrals of different orders is $0$, we can write
\begin{align*}
&  \mathbf{E}\left[  \left(  [X,3]_{\varepsilon}\left(  T\right)  \right)
^{2}\right]  =\frac{1}{\varepsilon^{2}}\int_{0}^{T}ds\int_{0}^{T}%
dt\mathbf{E}\left[  \left(  X\left(  s+\varepsilon\right)  -X\left(  s\right)
\right)  ^{3}\left(  X\left(  t+\varepsilon\right)  -X\left(  t\right)
\right)  ^{3}\right] \\
&  =\frac{1}{\varepsilon^{2}}\int_{0}^{T}ds\int_{0}^{T}dt\mathbf{E}\left[
I_{1}\left(  \Delta G_{s}\right)  ^{3}I_{1}\left(  \Delta G_{t}\right)
^{3}\right] \\
&  =\frac{9}{\varepsilon^{2}}\int_{0}^{T}ds\int_{0}^{T}dt\mathbf{E}\left[
I_{1}\left(  \Delta G_{s}\right)  I_{1}\left(  \Delta G_{t}\right)  \right]
\left\vert \Delta G_{s}\right\vert _{L^{2}\left(  [0,T]\right)  }%
^{2}\left\vert \Delta G_{t}\right\vert _{L^{2}\left(  [0,T]\right)  }^{2}\\
&  +\frac{9}{\varepsilon^{2}}\int_{0}^{T}ds\int_{0}^{T}dt\mathbf{E}\left[
I_{3}\left(  \left(  \Delta G_{s}\right)  ^{\otimes3}\right)  I_{3}\left(
\left(  \Delta G_{t}\right)  ^{\otimes3}\right)  \right]  .
\end{align*}
Now we use the fact that $\mathbf{E}\left[  I_{3}\left(  f\right)
I_{3}\left(  g\right)  \right]  =\left\langle f,g\right\rangle _{L^{2}\left(
[0,T]^{3}\right)  }$, plus the fact that in our homogeneous situation
$\left\vert \Delta G_{s}\right\vert _{L^{2}\left(  [0,T]\right)  }^{2}%
=\delta^{2}\left(  \varepsilon\right)  $ for any $s$. Hence the above equals%
\begin{align*}
&  \frac{9\delta^{4}\left(  \varepsilon\right)  }{\varepsilon^{2}}\int_{0}%
^{T}ds\int_{0}^{T}dt\left\langle \Delta G_{s},\Delta G_{t}\right\rangle
_{L^{2}\left(  [0,T]\right)  }+\frac{9}{\varepsilon^{2}}\int_{0}^{T}ds\int
_{0}^{T}dt\left\langle \left(  \Delta G_{s}\right)  ^{\otimes3},\left(  \Delta
G_{t}\right)  ^{\otimes3}\right\rangle _{L^{2}\left(  [0,T]^{3}\right)  }\\
&  =\frac{9\delta^{4}\left(  \varepsilon\right)  }{\varepsilon^{2}}\int
_{0}^{T}ds\int_{0}^{T}dt\int_{0}^{T}du\Delta G_{s}\left(  u\right)  \Delta
G_{t}\left(  u\right)  +\frac{9}{\varepsilon^{2}}\int_{0}^{T}ds\int_{0}^{T}dt%
{\displaystyle\iiint\limits_{[0,T]^{3}}}
\prod_{i=1}^{3}\left(  du_{i}\Delta G_{s}\left(  u_{i}\right)  \Delta
G_{t}\left(  u_{i}\right)  \right) \\
&  =\frac{9\delta^{4}\left(  \varepsilon\right)  }{\varepsilon^{2}}\int
_{0}^{T}du\left\vert \int_{0}^{T}ds\Delta G_{s}\left(  u\right)  \right\vert
^{2}+\frac{9}{\varepsilon^{2}}%
{\displaystyle\iiint\limits_{[0,T]^{3}}}
du_{1}\ du_{2}\ du_{3}\left\vert \int_{0}^{T}ds\prod_{i=1}^{3}\left(  \Delta
G_{s}\left(  u_{i}\right)  \right)  \right\vert ^{2}.
\end{align*}
\vspace{0.1in}

\noindent\emph{Step 4.2: }$J_{1}$\emph{ as a lower bound}. The above
representation is extremely useful because it turns out, as one readily
checks, that of the two summands in the last expression above, the first is
what we called $J_{1}$ and the second is $J_{0}$, and we can now see that both
these terms are positive, which was not at all obvious before, since, as we
recall, the off-diagonal contribution to either term is negative by our
concavity assumption. Nevertheless, we may now have a lower bound on the
$\varepsilon$-variation by finding a lower bound for the term $J_{1}$ alone.

Reverting to our method of separating diagonal and off-diagonal terms, and
recalling by Step 2 that we can restrict $t\geq\varepsilon$, we have%
\begin{align*}
J_{1}  &  =\frac{9\delta^{4}\left(  \varepsilon\right)  }{\varepsilon^{2}%
}2\int_{\varepsilon}^{T}dt\int_{0}^{t}ds\int_{0}^{T}du\Delta G_{s}\left(
u\right)  \Delta G_{t}\left(  u\right) \\
&  =\frac{9\delta^{4}\left(  \varepsilon\right)  }{\varepsilon^{2}}%
2\int_{\varepsilon}^{T}dt\int_{0}^{t}ds\Theta_{\varepsilon}\left(  s,t\right)
\\
&  =\frac{9\delta^{4}\left(  \varepsilon\right)  }{\varepsilon^{2}}%
\int_{\varepsilon}^{T}dt\int_{0}^{t}ds\left(  \delta^{2}\left(
t-s+\varepsilon\right)  -\delta^{2}\left(  t-s\right)  -\left(  \delta
^{2}\left(  t-s\right)  -\delta^{2}\left(  \left\vert t-s-\varepsilon
\right\vert \right)  \right)  \right) \\
&  =J_{1,D}+J_{1,OD}%
\end{align*}
where, performing the change of variables $t-s\mapsto s$%
\begin{align*}
J_{1,D}  &  :=\frac{9\delta^{4}\left(  \varepsilon\right)  }{\varepsilon^{2}%
}\int_{\varepsilon}^{T}dt\int_{0}^{\varepsilon}ds\left(  \delta^{2}\left(
s+\varepsilon\right)  -\delta^{2}\left(  s\right)  -\left(  \delta^{2}\left(
s\right)  -\delta^{2}\left(  \varepsilon-s\right)  \right)  \right) \\
J_{1,OD}  &  :=\frac{9\delta^{4}\left(  \varepsilon\right)  }{\varepsilon^{2}%
}\int_{\varepsilon}^{T}dt\int_{\varepsilon}^{t}ds\left(  \delta^{2}\left(
s+\varepsilon\right)  -\delta^{2}\left(  s\right)  -\left(  \delta^{2}\left(
s\right)  -\delta^{2}\left(  s-\varepsilon\right)  \right)  \right)  .
\end{align*}
\vspace{0.1in}

\noindent\emph{Step 4.3: Upper bound on }$\left\vert J_{1,OD}\right\vert $.
Using the calculations performed in Step 3 (note here that $\left(
m-1\right)  /2=1$, in particular line (\ref{usein43}), we have
\begin{align*}
J_{1,OD}  &  =\frac{9\delta^{4}\left(  \varepsilon\right)  }{\varepsilon^{2}%
}\int_{\varepsilon}^{T}dt\left[  \int_{t-\varepsilon}^{t}ds\int_{s}%
^{s+\varepsilon}d\delta^{2}\left(  r\right)  -\int_{0}^{\varepsilon}ds\int
_{s}^{s+\varepsilon}d\delta^{2}\left(  r\right)  \right] \\
&  =:K_{1}+K_{2}.
\end{align*}
We can already see that $K_{1}\geq0$ and $K_{2}\leq0$, so it's only necessary
to find an upper bound on $\left\vert K_{2}\right\vert $; but in reality, the
reader will easily check that $\left\vert K_{1}\right\vert $ is of the order
$\delta^{6}\left(  \varepsilon\right)  $, and we will see that this is much
smaller than either $J_{1,D}$ or $\left\vert K_{2}\right\vert $. Performing a
Fubini on the variables $s$ and $r$, the integrand in $K_{2}$ is calculated as%
\begin{align*}
\int_{0}^{\varepsilon}ds\int_{s}^{s+\varepsilon}d\delta^{2}\left(  r\right)
&  =\int_{r=0}^{\varepsilon}d\delta^{2}\left(  r\right)  \int_{s=0}^{r}%
ds+\int_{r=\varepsilon}^{2\varepsilon}d\delta^{2}\left(  r\right)
\int_{s=r-\varepsilon}^{\varepsilon}ds\\
&  =\int_{r=0}^{\varepsilon}r\ d\delta^{2}\left(  r\right)  +\int
_{r=\varepsilon}^{2\varepsilon}\left(  2\varepsilon-r\right)  d\delta
^{2}\left(  r\right) \\
&  =\left[  r\delta^{2}\left(  r\right)  \right]  _{0}^{\varepsilon}-\left[
r\delta^{2}\left(  r\right)  \right]  _{\varepsilon}^{2\varepsilon}-\int
_{0}^{\varepsilon}\delta^{2}\left(  r\right)  dr+\int_{\varepsilon
}^{2\varepsilon}\delta^{2}\left(  r\right)  dr+2\varepsilon\left(  \delta
^{2}\left(  2\varepsilon\right)  -\delta^{2}\left(  \varepsilon\right)
\right) \\
&  =-\int_{0}^{\varepsilon}\delta^{2}\left(  r\right)  dr+\int_{\varepsilon
}^{2\varepsilon}\delta^{2}\left(  r\right)  dr.
\end{align*}
In particular, because $\left\vert K_{1}\right\vert \ll\left\vert
K_{2}\right\vert $ and $\delta^{2}$ is increasing, we get%
\begin{align}
\left\vert J_{1,OD}\right\vert  &  \leq\frac{9\delta^{4}\left(  \varepsilon
\right)  }{\varepsilon^{2}}\int_{\varepsilon}^{T}dt\left(  \int_{\varepsilon
}^{2\varepsilon}\delta^{2}\left(  r\right)  dr-\int_{0}^{\varepsilon}%
\delta^{2}\left(  r\right)  dr\right) \nonumber\\
&  =\frac{9\left(  T-\varepsilon\right)  \delta^{4}\left(  \varepsilon\right)
}{\varepsilon^{2}}\left(  \int_{\varepsilon}^{2\varepsilon}\delta^{2}\left(
r\right)  dr-\int_{0}^{\varepsilon}\delta^{2}\left(  r\right)  dr\right)  .
\label{J1ODlater}%
\end{align}
\vspace{0.1in}

\noindent\emph{Step 4.4: Lower bound on }$J_{1,D}$. Note first that
\[
\int_{0}^{\varepsilon}ds\left(  \delta^{2}\left(  s\right)  -\delta^{2}\left(
\varepsilon-s\right)  \right)  =\int_{0}^{\varepsilon}ds\ \delta^{2}\left(
s\right)  -\int_{0}^{\varepsilon}ds\ \delta^{2}\left(  \varepsilon-s\right)
=0.
\]
Therefore%
\begin{align*}
J_{1,D}  &  =\frac{9\delta^{4}\left(  \varepsilon\right)  }{\varepsilon^{2}%
}\int_{\varepsilon}^{T}dt\int_{0}^{\varepsilon}ds\left(  \delta^{2}\left(
s+\varepsilon\right)  -\delta^{2}\left(  s\right)  \right) \\
&  =\frac{9\delta^{4}\left(  \varepsilon\right)  }{\varepsilon^{2}}\left(
T-\varepsilon\right)  \int_{0}^{\varepsilon}ds\int_{s}^{s+\varepsilon}%
d\delta^{2}\left(  r\right)  .
\end{align*}
We can also perform a Fubini on the integral in $J_{1,D}$, obtaining%
\begin{align*}
\int_{0}^{\varepsilon}ds\int_{s}^{s+\varepsilon}d\delta^{2}\left(  r\right)
&  =\int_{0}^{\varepsilon}r\ d\delta^{2}\left(  r\right)  +\varepsilon
\int_{\varepsilon}^{2\varepsilon}d\delta^{2}\left(  r\right) \\
&  =\left[  r\delta^{2}\left(  r\right)  \right]  _{0}^{\varepsilon}-\int
_{0}^{\varepsilon}\delta^{2}\left(  r\right)  dr+\varepsilon\left(  \delta
^{2}\left(  2\varepsilon\right)  -\delta^{2}\left(  \varepsilon\right)
\right) \\
&  =\varepsilon\delta^{2}\left(  2\varepsilon\right)  -\int_{0}^{\varepsilon
}\delta^{2}\left(  r\right)  dr.
\end{align*}
\vspace{0.1in}In other words,%
\[
J_{1,D}=\frac{9\delta^{4}\left(  \varepsilon\right)  }{\varepsilon^{2}}\left(
T-\varepsilon\right)  \left(  \varepsilon\delta^{2}\left(  2\varepsilon
\right)  -\int_{0}^{\varepsilon}\delta^{2}\left(  r\right)  dr\right)  .
\]

\noindent\emph{Step 4.5: conclusion.} We may now compare $J_{1,D}$ and
$\left\vert J_{1,OD}\right\vert $: using the results of Steps 4.1 and 4.2,
\begin{align*}
J_{1}  &  =J_{1,D}-\left\vert J_{1,OD}\right\vert \geq\frac{9\delta^{4}\left(
\varepsilon\right)  }{\varepsilon^{2}}\left(  T-\varepsilon\right)  \left(
\varepsilon\delta^{2}\left(  2\varepsilon\right)  -\int_{0}^{\varepsilon
}\delta^{2}\left(  r\right)  dr\right) \\
&  -\frac{9\delta^{4}\left(  \varepsilon\right)  }{\varepsilon^{2}}\left(
T-\varepsilon\right)  \left(  \int_{\varepsilon}^{2\varepsilon}\delta
^{2}\left(  r\right)  dr-\int_{0}^{\varepsilon}\delta^{2}\left(  r\right)
dr\right) \\
&  =\frac{9\delta^{4}\left(  \varepsilon\right)  }{\varepsilon^{2}}\left(
T-\varepsilon\right)  \int_{\varepsilon}^{2\varepsilon}\left(  \delta
^{2}\left(  2\varepsilon\right)  -\delta^{2}\left(  r\right)  \right)  dr.
\end{align*}
When $\delta$ is in the H\"{o}lder scale $\delta\left(  r\right)  =r^{H}$, the
above quantity is obviously commensurate with $\delta^{6}\left(
\varepsilon\right)  /\varepsilon$, which implies the desired result, but in
order to be sure we are treating all cases, we now present a general proof
which only relies on the fact that $\delta^{2}$ is increasing and concave.

Below we use the notation $\left(  \delta^{2}\right)  ^{\prime}$ for the
density of $d\delta^{2}$, which exists a.e. since $\delta^{2}$ is concave. The
mean value theorem and the concavity of $\delta^{2}$ then imply that for any
$r\in\lbrack\varepsilon,2\varepsilon]$,
\[
\delta^{2}\left(  2\varepsilon\right)  -\delta^{2}\left(  r\right)
\geq\left(  2\varepsilon-r\right)  \inf_{[\varepsilon,2\varepsilon]}\left(
\delta^{2}\right)  ^{\prime}=\left(  2\varepsilon-r\right)  \left(  \delta
^{2}\right)  ^{\prime}\left(  2\varepsilon\right)  .
\]
Thus we can write%
\begin{align*}
J_{1}  &  \geq9(T-\varepsilon)\varepsilon^{-1}\delta^{4}\left(  \varepsilon
\right)  \left(  \delta^{2}\right)  ^{\prime}\left(  2\varepsilon\right)
\int_{\varepsilon}^{2\varepsilon}\left(  2\varepsilon-r\right)  dr\\
&  =9(T-\varepsilon)\varepsilon^{-1}\delta^{4}\left(  \varepsilon\right)
\left(  \delta^{2}\right)  ^{\prime}\left(  2\varepsilon\right)
\varepsilon^{2}/2\\
&  \geq cst\cdot\delta^{4}\left(  \varepsilon\right)  \cdot\left(  \delta
^{2}\right)  ^{\prime}\left(  2\varepsilon\right)  .
\end{align*}
Since $\delta^{2}$ is concave, and $\delta\left(  0\right)  =0$, we have
$\delta^{2}\left(  \varepsilon\right)  \geq\delta^{2}\left(  2\varepsilon
\right)  /2$. Hence, with the notation $f\left(  x\right)  =\delta^{2}\left(
2x\right)  $, we have%
\[
J_{1}\geq cst\cdot f^{2}\left(  \varepsilon\right)  f^{\prime}\left(
\varepsilon\right)  =cst\cdot\left(  f^{3}\right)  ^{\prime}\left(
\varepsilon\right)  .
\]
Therefore we have that $\lim_{\varepsilon\rightarrow0}\left(  f^{3}\right)
^{\prime}\left(  \varepsilon\right)  =0$. We prove this implies $\lim
_{\varepsilon\rightarrow0}\varepsilon^{-1}f^{3}\left(  \varepsilon\right)
=0$. Indeed, fix $\eta>0$; then there exists $\varepsilon_{\eta}>0$ such that
for all $\varepsilon\in(0,\varepsilon_{\eta}]$, $0\leq\left(  f^{3}\right)
^{\prime}\left(  \varepsilon\right)  \leq\eta$ (we used the positivity of
$\left(  \delta^{2}\right)  ^{\prime}$). Hence, also using $f\left(  0\right)
=0$, for any $\varepsilon\in(0,\varepsilon_{\eta}]$,
\[
0\leq\frac{f^{3}\left(  \varepsilon\right)  }{\varepsilon}=\frac
{1}{\varepsilon}\int_{0}^{\varepsilon}\left(  f^{3}\right)  ^{\prime}\left(
x\right)  dx\leq\frac{1}{\varepsilon}\int_{0}^{\varepsilon}\eta dx=\eta.
\]
This proves that $\lim_{\varepsilon\rightarrow0}\varepsilon^{-1}f^{3}\left(
\varepsilon\right)  =0$, which is equivalent to the announced necessary
condition, and finishes the proof of the theorem.
\end{proof}

\subsection{Non-homogeneous case\label{NonHomogGaussSect}}

The concavity and homogeneity assumptions were used heavily above for the
proof of the necessary condition in Theorem \ref{HomogGauss}. However, these
assumptions can be considerably weakened while still resulting in a sufficient
condition. We now show that a weak uniformity condition on the variances,
coupled with a natural bound on the second-derivative measure of $\delta^{2}$,
result in zero $m$-variation processes.

\begin{theorem}
\label{nonhomoggauss}Let $m>1$ be an odd integer. Let $X$ be a centered
Gaussian process on $[0,T]$ with canonical metric%
\[
\delta^{2}\left(  s,t\right)  :=\mathbf{E}\left[  \left(  X\left(  t\right)
-X\left(  s\right)  \right)  ^{2}\right]  .
\]
Define a univariate function on $[0,T]$, also denoted by $\delta^{2}$, via%
\[
\delta^{2}\left(  r\right)  :=\sup_{s\in\lbrack0,T]}\delta^{2}\left(
s,s+r\right)  ,
\]
and assume that for $r$ near $0$,%
\begin{equation}
\delta\left(  r\right)  =o\left(  r^{1/2m}\right)  . \label{nhdeltacond}%
\end{equation}
Assume that, in the sense of distributions, the derivative $\partial\delta
^{2}/\left(  \partial s\partial t\right)  $ is a finite signed $\sigma$ finite
measure $\mu$ on $[0,T]^{2} - \Delta$ where $\Delta$ is the diagonal $\{(s,s)
\vert s \in[0,T]\}$. Denote the off-diagonal simplex by $OD=\{\left(
s,t\right)  :0\leq s\leq t-\varepsilon\leq T\}$; assume $\mu$ satisfies, for
some constant $c$ and for all $\varepsilon$ small enough,%
\begin{equation}
\left\vert \mu\right\vert \left(  OD\right)  \leq c\varepsilon^{-(m-1)/m},
\label{nhmubirdiecond}%
\end{equation}
where $\left\vert \mu\right\vert $ is the total variation measure of $\mu$.
Then $X$ has zero $m$th variation.
\end{theorem}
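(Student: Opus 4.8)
The plan is to follow the same diagonal/off-diagonal decomposition used in Theorem~\ref{HomogGauss}, but now keeping track of the non-homogeneity through the supremum-variance function $\delta^2(r)$ and the planar increment measure $\mu$. From Lemma~\ref{lemma1} we have
\[
\mathbf{E}\left[\left([X,m]_\varepsilon(T)\right)^2\right]=\sum_{j=0}^{(m-1)/2}J_j,\qquad J_j=\frac{c_j}{\varepsilon^2}\int_0^T dt\int_0^t ds\,\Theta^\varepsilon(s,t)^{m-2j}\,V_t^j V_s^j,
\]
where $V_t:=\mathrm{Var}[X(t+\varepsilon)-X(t)]=\delta^2(t,t+\varepsilon)\le\delta^2(\varepsilon)$. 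The first step is the diagonal region $D_\varepsilon=\{0\le t-\varepsilon<s<t\le T\}$ (together with the small-$t$ region $t\le\varepsilon$): on it Schwarz gives $|\Theta^\varepsilon(s,t)|\le\sqrt{V_tV_s}\le\delta^2(\varepsilon)$, so each $J_{j,D}$ is bounded by $cst\cdot\varepsilon^{-2}\delta^{2m}(\varepsilon)\cdot(\text{area}\le T\varepsilon)=cst\cdot\varepsilon^{-1}\delta^{2m}(\varepsilon)$, which tends to $0$ by \eqref{nhdeltacond}. This part is essentially identical to Steps~1--2 of the previous proof and requires no new ideas.

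The second step is the off-diagonal region $OD_\varepsilon=\{0\le s<t-\varepsilon<t\le T\}$. As in Step~3, I would first use Schwarz on the factors $V_t^j V_s^j$ and bound $|\Theta^\varepsilon|^{m-2j-1}\le\delta^{2(m-2j-1)}(\varepsilon)$ to reduce, for every $j$, to controlling a single integral of $|\Theta^\varepsilon(s,t)|$ over $OD_\varepsilon$; explicitly,
\[
|J_{j,OD}|\le \frac{|c_j|}{\varepsilon^2}\,\delta^{2m-2}(\varepsilon)\int\!\!\int_{OD_\varepsilon}|\Theta^\varepsilon(s,t)|\,ds\,dt .
\]
Now the key point is the identity \eqref{defDelta}: $\Theta^\varepsilon(s,t)=-\tfrac12\Delta_{(s,t);(s+\varepsilon,t+\varepsilon)}\delta^2$ is (up to sign and the factor $\tfrac12$) the planar increment of $\delta^2$ over the square with corners $(s,t)$ and $(s+\varepsilon,t+\varepsilon)$, hence equals $-\tfrac12\mu([s,s+\varepsilon]\times[t,t+\varepsilon])$ where $\mu=\partial^2\delta^2/\partial s\partial t$. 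Therefore
\[
\int\!\!\int_{OD_\varepsilon}|\Theta^\varepsilon(s,t)|\,ds\,dt\le\frac12\int\!\!\int_{OD_\varepsilon}|\mu|\big([s,s+\varepsilon]\times[t,t+\varepsilon]\big)\,ds\,dt\le\frac{\varepsilon^2}{2}\,|\mu|(OD),
\]
the last inequality by Fubini/Tonelli since each point of a slightly enlarged $OD$ is covered by at most an $\varepsilon^2$-area set of $(s,t)$'s. Feeding in the hypothesis \eqref{nhmubirdiecond}, $|\mu|(OD)\le c\varepsilon^{-1+1/m}$, gives
\[
|J_{j,OD}|\le cst\cdot\delta^{2m-2}(\varepsilon)\cdot\varepsilon^{-1+1/m}=cst\cdot\big(\varepsilon^{-1/(2m)}\delta(\varepsilon)\big)^{2m-2}\cdot\varepsilon^{-1+1/m+(2m-2)/(2m)}=cst\cdot\big(\varepsilon^{-1/(2m)}\delta(\varepsilon)\big)^{2m-2},
\]
since the exponent of $\varepsilon$ collapses: $-1+\tfrac1m+\tfrac{m-1}{m}=0$. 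By \eqref{nhdeltacond} the bracket tends to $0$, so every $J_{j,OD}\to0$.

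Combining the two steps, $\mathbf{E}[([X,m]_\varepsilon(T))^2]\le cst\cdot\varepsilon^{-1}\delta^{2m}(\varepsilon)+cst\cdot(\varepsilon^{-1/(2m)}\delta(\varepsilon))^{2m-2}\to0$, which is the claim. I expect the only real obstacle to be the precise bookkeeping in the Fubini estimate that turns a double integral of $|\mu|$-masses of translated $\varepsilon\times\varepsilon$ squares into $\varepsilon^2|\mu|(OD)$, and making sure the domain $OD$ as defined (which depends on $\varepsilon$) is the right one so that all squares $[s,s+\varepsilon]\times[t,t+\varepsilon]$ with $(s,t)\in OD_\varepsilon$ stay inside a fixed enlargement of $OD$; the exponent arithmetic and the rest are routine. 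A secondary point to handle with care is the reduction step when $m-2j-1=0$ (i.e.\ $j=(m-1)/2$), where no power of $\delta^2(\varepsilon)$ is pulled out and one works directly with $\int\!\int_{OD_\varepsilon}|\Theta^\varepsilon|$ — but that is exactly the case the displayed bound covers.
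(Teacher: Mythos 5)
Your proposal is correct and follows essentially the same route as the paper: the same diagonal/off-diagonal split, the same Schwarz reduction of every $J_{j,OD}$ to a single integral of $\left\vert \Theta^{\varepsilon}\right\vert$ weighted by $\delta^{2m-2}\left(  \varepsilon\right)  $, and the same Fubini argument converting $\iint\left\vert \mu\right\vert \left(  [s,s+\varepsilon]\times\lbrack t,t+\varepsilon]\right)  dsdt$ into $\varepsilon^{2}\left\vert \mu\right\vert \left(  OD\right)  $ before invoking (\ref{nhmubirdiecond}). The one bookkeeping point you flag is resolved in the paper exactly as you anticipate, by taking the diagonal strip of width $2\varepsilon$ so that the translated squares project into the region $u\leq v-\varepsilon$, i.e. into $OD$ itself.
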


\begin{proof}
\noindent\emph{Step 0: setup.} Recall that by Lemma \ref{lemma1},%
\begin{align}
\mathbf{E}\left[  \left(  [X,m]_{\varepsilon}\left(  T\right)  \right)
^{2}\right]   &  =\frac{1}{\varepsilon^{2}}\sum_{j=0}^{\left(  m-1\right)
/2}c_{j}\int_{0}^{T}\int_{0}^{t}dtds\Theta^{\varepsilon}\left(  s,t\right)
^{m-2j}\ \delta^{2j}\left(  s,s+\varepsilon\right)  \ \delta^{2j}\left(
t,t+\varepsilon\right) \label{later4Ito}\\
&  :=\sum_{j=0}^{\left(  m-1\right)  /2}J_{j}\nonumber
\end{align}
and now we express%
\begin{equation}
\Theta^{\varepsilon}\left(  s,t\right)  =\mu\left(  \lbrack s,s+\varepsilon
]\times\lbrack t,t+\varepsilon)\right)  =\int_{s}^{s+\varepsilon}\int
_{t}^{t+\varepsilon}\mu\left(  dudv\right)  . \label{Thetamubirdie}%
\end{equation}
We again separate the diagonal term from the off-diagonal term, although this
time the diagonal is twice as wide: it is defined as $\{\left(  s,t\right)
:0\leq t-2\varepsilon\leq s\leq t\}$.\vspace{0.1in}

\noindent\emph{Step 1: diagonal.} Using Cauchy-Schwarz's inequality which
implies $\left\vert \Theta^{\varepsilon}\left(  s,t\right)  \right\vert
\leq\delta\left(  s,s+\varepsilon\right)  \delta\left(  t,t+\varepsilon
\right)  $, and bounding each term $\delta\left(  s,s+\varepsilon\right)  $ by
$\delta\left(  \varepsilon\right)  $, the diagonal portion of $\mathbf{E}%
\left[  \left(  [X,m]_{\varepsilon}\left(  T\right)  \right)  ^{2}\right]  $
can be bounded above, in absolute value, by%
\[
\frac{1}{\varepsilon^{2}}\sum_{j=0}^{\left(  m-1\right)  /2}c_{j}%
\int_{2\varepsilon}^{T}dt\int_{t-2\varepsilon}^{t}ds\delta^{2m}\left(
\varepsilon\right)  =cst\cdot\varepsilon^{-1}\delta^{2m}\left(  \varepsilon
\right)  .
\]
The hypothesis on the univariate $\delta^{2}$ implies that this converges to
$0$ as $\varepsilon$ tends to $0$. The case of $t\leq2\varepsilon$ works
equally easily.\vspace{0.1in}

\noindent\emph{Step 2: off diagonal.} The off-diagonal contribution is the sum
for $j=0,\cdots,\left(  m-1\right)  /2$ of the terms%
\begin{equation}
J_{j,OD}=\varepsilon^{-2}c_{j}\int_{2\varepsilon}^{T}dt\int_{0}%
^{t-2\varepsilon}ds\delta^{2j}\left(  s,s+\varepsilon\right)  \delta
^{2j}\left(  t,t+\varepsilon\right)  \Theta^{\varepsilon}\left(  s,t\right)
^{m-2j} \label{JjOD}%
\end{equation}
As we will prove below, the dominant term turns out to be $J_{\left(
m-1\right)  /2,OD}$; we deal with it now.\vspace{0.1in}

\noindent\emph{Step 2.1: term }$J_{\left(  m-1\right)  /2,OD}$. Denoting
$c=\left\vert c_{\left(  m-1\right)  /2}\right\vert $, we have%
\[
\left\vert J_{\left(  m-1\right)  /2,OD}\right\vert \leq\frac{c\delta
^{2m-2}\left(  \varepsilon\right)  }{\varepsilon^{2}}\int_{2\varepsilon}%
^{T}dt\int_{0}^{t-2\varepsilon}ds\left\vert \Theta^{\varepsilon}\left(
s,t\right)  \right\vert .
\]
We estimate the integral, using the formula (\ref{Thetamubirdie}) and Fubini's
theorem:%
\begin{align*}
\int_{2\varepsilon}^{T}dt\int_{0}^{t-2\varepsilon}ds\left\vert \Theta
^{\varepsilon}\left(  s,t\right)  \right\vert  &  =\int_{2\varepsilon}%
^{T}dt\int_{0}^{t-2\varepsilon}ds\left\vert \int_{s}^{s+\varepsilon}\int
_{t}^{t+\varepsilon}\mu\left(  dudv\right)  \right\vert \\
&  \leq\int_{2\varepsilon}^{T}dt\int_{0}^{t-2\varepsilon}ds\int_{s}%
^{s+\varepsilon}\int_{t}^{t+\varepsilon}\left\vert \mu\right\vert \left(
dudv\right) \\
&  =\int_{v=2\varepsilon}^{T+\varepsilon}\int_{u=0}^{\min(v,T)-\varepsilon
}\left\vert \mu\right\vert \left(  dudv\right)  \int_{t=\max(2\varepsilon
,v-\varepsilon,u+\varepsilon)}^{\min(v,T)}\int_{s=\max\left(  0,u-\varepsilon
\right)  }^{\min\left(  u,t-2\varepsilon\right)  }ds\ dt\\
&  \leq\int_{v=2\varepsilon}^{T+\varepsilon}\int_{u=0}^{v-\varepsilon
}\left\vert \mu\right\vert \left(  dudv\right)  \int_{t=v-\varepsilon}^{v}%
\int_{s=u-\varepsilon}^{u}ds\ dt\\
&  =\varepsilon^{2}\int_{v=2\varepsilon}^{T+\varepsilon}\int_{u=0}%
^{v-\varepsilon}\left\vert \mu\right\vert \left(  dudv\right)
\end{align*}
Hence we have%
\begin{align*}
J_{\left(  m-1\right)  /2,OD}  &  \leq c\delta^{2m-2}\left(  \varepsilon
\right)  \int_{v=2\varepsilon}^{T+\varepsilon}\int_{u=0}^{v-\varepsilon
}\left\vert \mu\right\vert \left(  dudv\right) \\
&  \leq c\delta^{2m-2}\left(  \varepsilon\right)  \left\vert \mu\right\vert
\left(  OD\right)  ,
\end{align*}
which again converges to $0$ by hypothesis as $\varepsilon$ goes to
$0$.\vspace{0.1in}

\noindent\emph{Step 2.2: other }$J_{j,OD}$ \emph{terms}. Let now $j<\left(
m-1\right)  /2$. Using Cauchy-Schwarz's inequality for all but one of the
$m-2j$ factors $\Theta$ in the expression (\ref{JjOD}) for $J_{j,OD}$, which
is allowed because $m-2j\geq1$ here, exploiting the bounds on the variance
terms via the univariate function $\delta$, we have%
\begin{align*}
\left\vert J_{j,OD}\right\vert  &  \leq\frac{\delta^{4j}\left(  \varepsilon
\right)  c_{j}}{\varepsilon^{2}}\int_{2\varepsilon}^{T}dt\int_{0}%
^{t-2\varepsilon}ds\left\vert \Theta^{\varepsilon}\left(  s,t\right)
\right\vert ^{m-2j-1}\left\vert \Theta^{\varepsilon}\left(  s,t\right)
\right\vert \\
&  \leq\delta^{2m-2}\left(  \varepsilon\right)  c_{j}\varepsilon^{-2}%
\int_{2\varepsilon}^{T}dt\int_{0}^{t-2\varepsilon}ds\left\vert \Theta
^{\varepsilon}\left(  s,t\right)  \right\vert ,
\end{align*}
which is the same term we estimated in Step 2.1. This finishes the proof of
the theorem.\bigskip
\end{proof}

\label{RL} A typical situation covered by the above theorem is that of the
Riemann-Liouville fractional Brownian motion. This is the process $B^{H,RL}$
defined by $B^{H,RL}\left(  t\right)  =\int_{0}^{t}\left(  t-s\right)
^{H-1/2}dW\left(  s\right)  $. Its canonical metric is not homogeneous, but we
do have, when $H\in\left(  0,1/2\right)  $,%
\begin{equation}
\label{ERL}\left\vert t-s\right\vert ^{H}\leq\delta\left(  s,t\right)
\leq2\left\vert t-s\right\vert ^{H},
\end{equation}
which implies, incidentally, that $B^{H,RL}$ has the same regularity
properties as fractional Brownian motion, see \cite{MV} for a proof of these
inequalities. To apply the theorem, we must choose $H>1/\left(  2m\right)  $
for the condition on the variances. For the other condition, we calculate that
$\mu\left(  dsdt\right)  =2H\left(  1-2H\right)  \left\vert t-s\right\vert
^{2H-2}dsdt$, and therefore%
\[
\mu\left(  OD\right)  =\left\vert \mu\right\vert \left(  OD\right)  =c_{H}%
\int_{0}^{T}\int_{\varepsilon}^{t}s^{2H-2}dsdt\leq c_{H}T\varepsilon^{2H-1}.
\]
This quantity is bounded above by $\varepsilon^{-1+1/m}$ as soon as
$H\geq1/\left(  2m\right)  $, of course, so the strict inequality is
sufficient to apply the theorem and conclude that $B^{H,RL}$ then has zero
$m$th variation.

One can generalize this example to any Gaussian process with a
Volterra-convolution kernel: let $\gamma^{2}$ be a univariate increasing
concave function, differentiable everywhere except possibly at $0$, and define%
\begin{equation}
X\left(  t\right)  =\int_{0}^{t}\left(  \frac{d\gamma^{2}}{dr}\right)
^{1/2}\left(  t-r\right)  dW\left(  r\right)  . \label{volterra}%
\end{equation}
Then one can show (see \cite{MV}) that the canonical metric $\delta^{2}\left(
s,t\right)  $ of $X$ is bounded above by $2\gamma^{2}\left(  \left\vert
t-s\right\vert \right)  $, so that we can use the univariate $\delta
^{2}=2\gamma^{2}$, and also $\delta^{2}\left(  s,t\right)  $ is bounded below
by $\gamma^{2}\left(  \left\vert t-s\right\vert \right)  $. Similar
calculations to the above then easily show that $X$ has zero $m$th variation
as soon as $\delta^{2}\left(  r\right)  =o\left(  r^{1/\left(  2m\right)
}\right)  $. Hence there are inhomogeneous processes that are more irregular
than fractional Brownian for any $H>1/\left(  2m\right)  $ which still have
zero $m$th variation: use for instance the $X$ above with $\gamma^{2}\left(
r\right)  =r^{1/\left(  2m\right)  }/\log\left(  1/r\right)  $. \newline

\subsection{Non-odd integer powers}

When $m\geq1$ is not an odd integer, recall that to define the $m$th odd
variation, we use the convention $((x))^{m}=\left\vert x\right\vert
^{m}\mbox{sgn}\left(  x\right)  $, which is an odd function. The idea here is
to use the Taylor expansion for this function up to order $\left[  m\right]
$, with a remainder of order $\left[  m\right]  +1$; it can be expressed as
the following elementary lemma, whose proof is omitted.

\begin{lemma}
\label{lemmaabm}Fix $m>1$ and two reals $a$ and $b$ such that $\left\vert
a\right\vert \geq\left\vert b\right\vert $. Let $\binom{m}{k}$ denote the
formal binomial coefficient $m\left(  m-1\right)  \cdots\left(  m-k+1\right)
/\left(  k\left(  k-1\right)  \cdots1\right)  $ and let $\left(  \left(
x\right)  \right)  :=\left\vert x\right\vert \mbox{sgn}\left(  x\right)  $.
Then, for all reals $a,b$,

\begin{enumerate}
\item if $\left\vert b/a\right\vert <1$,%
\[
\left(  \left(  a\right)  \right)  ^{m}\left(  \left(  a+b\right)  \right)
^{m}=\sum_{k=0}^{\left[  m\right]  -1}\binom{m}{k}\mbox{sgn}^{k}\left(
a\right)  \left\vert a\right\vert ^{2m-k}b^{k}+\left\vert a\right\vert
^{2m}f_{m-1}\left(  \frac{b}{a}\right)  ,
\]

\item and if $\left\vert a/b\right\vert <1$%
\[
\left(  \left(  a\right)  \right)  ^{m}\left(  \left(  a+b\right)  \right)
^{m}=\sum_{k=0}^{\left[  m\right]  }\binom{m}{k}\mbox{sgn}^{k+1}\left(
a\right)  \mbox{sgn}^{k+1}\left(  b\right)  \left\vert a\right\vert
^{m+k}\left\vert b\right\vert ^{m-k}+\left(  ab\right)  ^{m}f_{m}\left(
\frac{a}{b}\right)  ,
\]

\end{enumerate}

where for all $\left\vert x\right\vert <1$, $\left\vert f_{m}\left(  x\right)
\right\vert \leq c_{m}\left\vert x\right\vert ^{\left[  m\right]  +1}$ where
$c_{m}$ depends only on $m$. When $m$ is an integer, the above formulas have
null remainder terms $f$.
\end{lemma}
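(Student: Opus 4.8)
The plan is to reduce both identities to a single one-variable fact — the Taylor behaviour near $0$ of the ordinary power function $g(x):=(1+x)^{m}$, which is $C^{\infty}$ on $(-1,1)$ because there $1+x>0$, so no signs or absolute values intervene — and then to exploit the algebraic homogeneity of the map $(a,b)\mapsto(a)^{m}(a+b)^{m}$. Write $n:=[m]$. The two regimes $|b/a|<1$ and $|a/b|<1$ are exactly the complementary cases $|b|<|a|$ and $|a|<|b|$, so together they cover every situation except the boundary $|a|=|b|$. I would define the error functions directly by $f_{m-1}(x):=g(x)-\sum_{k=0}^{n-1}\binom{m}{k}x^{k}$ and $f_{m}(x):=g(x)-\sum_{k=0}^{n}\binom{m}{k}x^{k}$; the only substantive point is then the size estimate on these on $(-1,1)$.

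For part (1), assume $|b/a|<1$. Then $a\ne0$, $a+b$ has the same sign as $a$, and $a+b=a(1+b/a)$ with $1+b/a>0$, so $(a)^{m}(a+b)^{m}=|a|^{m}\mbox{sgn}(a)\,|a+b|^{m}\mbox{sgn}(a+b)=|a|^{2m}\,g(b/a)$ using $\mbox{sgn}(a)^{2}=1$. Expanding $g$ and multiplying through by $|a|^{2m}$, the identity follows from the observation $|a|^{2m}(b/a)^{k}=\mbox{sgn}^{k}(a)\,|a|^{2m-k}b^{k}$ (since $a^{-k}=|a|^{-k}\mbox{sgn}(a)^{-k}=|a|^{-k}\mbox{sgn}(a)^{k}$). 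The bound on $f_{m-1}$ is immediate from Taylor's theorem with Lagrange remainder: $f_{m-1}(x)=\binom{m}{n}(1+\theta x)^{m-n}x^{n}$ for some $\theta=\theta(x)\in(0,1)$, and since the exponent $m-n=m-[m]\in[0,1)$ is nonnegative, $0<(1+\theta x)^{m-n}\le2^{m-n}\le2$ for every $|x|<1$; hence $|f_{m-1}(x)|\le2\,|\binom{m}{n}|\,|x|^{n}=c_{m}|x|^{[m-1]+1}$.

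For part (2), assume $|a/b|<1$. Then $b\ne0$, $\mbox{sgn}(a+b)=\mbox{sgn}(b)$, and $a+b=b(1+a/b)$ with $1+a/b>0$, so $(a)^{m}(a+b)^{m}=|a|^{m}|b|^{m}\mbox{sgn}(a)\mbox{sgn}(b)\,g(a/b)=(ab)^{m}\,g(a/b)$. Expanding $g$ to degree $n$ and multiplying by $(ab)^{m}$, using $(ab)^{m}(a/b)^{k}=|a|^{m+k}|b|^{m-k}\mbox{sgn}^{k+1}(a)\mbox{sgn}^{k+1}(b)$, yields the stated formula with remainder $(ab)^{m}f_{m}(a/b)$. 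The one delicate point of the whole lemma is the bound on $f_{m}$, because the Lagrange remainder $f_{m}(x)=\binom{m}{n+1}(1+\theta x)^{m-n-1}x^{n+1}$ now carries the exponent $m-n-1=\{m\}-1\in[-1,0)$, which is unbounded as its argument tends to $-1$. I would handle this with a two-region argument on $(-1,1)$. On $|x|\le1/2$ the Lagrange formula still gives a clean bound, since $1+\theta x\in(1/2,3/2)$ forces $(1+\theta x)^{m-n-1}\le2^{1-\{m\}}\le2$, whence $|f_{m}(x)|\le2\,|\binom{m}{n+1}|\,|x|^{n+1}$. On $1/2<|x|<1$, instead use that $g$ extends continuously — hence boundedly, with $g(-1)=0$ because $m>1>0$ — to $[-1,1]$, so that $f_{m}$ is bounded there by some $C_{m}$ while $|x|^{n+1}\ge2^{-(n+1)}$; thus $|f_{m}(x)|\le C_{m}2^{n+1}|x|^{n+1}$. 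Taking $c_{m}$ to be the larger of the two constants gives $|f_{m}(x)|\le c_{m}|x|^{[m]+1}$ on all of $(-1,1)$. (When $m$ is an integer the coefficient $\binom{m}{n+1}$ vanishes and $f_{m}\equiv0$, so there is nothing to prove.) In short, the only real obstacle is this near-boundary behaviour of the remainder in part (2), dispatched by the region split above; the remainder of the argument is sign- and exponent-bookkeeping, which I would write out compactly.
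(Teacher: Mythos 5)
Your proposal is correct: the reduction of both identities to the Taylor expansion of $\left(1+x\right)^{m}$ on $\left(-1,1\right)$ is the natural (and surely the intended) route, the sign/exponent bookkeeping checks out, and you correctly isolate and resolve the one genuine difficulty, namely that the Lagrange remainder in part (2) carries the exponent $m-\left[m\right]-1<0$ and so must be controlled near $x=-1$ by the boundedness of $\left(1+x\right)^{m}$ rather than by the derivative bound. The paper relegates its own proof to an appendix rather than the main text, but your argument is complete and self-contained as written.
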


When we apply this lemma to the question of finding Gaussian fields with zero
odd $m$th variation, we are able to prove that the sufficient condition of
Theorem \ref{HomogGauss} still works. The result is the following.

\begin{theorem}
\label{nope}Let $X$ be as in Theorem \ref{HomogGauss} ($X$ with homogeneous
increments, with an increasing and concave $\delta^{2}$). Let $m$ be any real
number $>1$. Consider the condition

\begin{description}
\item[(S)] $\delta^{2}$ is twice differentiable, and for some $c<1$, the
function $r\mapsto\left\vert \left(  \delta^{2}\right)  ^{\prime\prime}\left(
r\right)  \right\vert $ is decreasing and bounded above by $cr^{-2}\delta
^{2}\left(  r\right)  $.
\end{description}

If $m$ \textbf{is not} an integer, then $X$ has zero odd $m$th variation as
soon as $\delta\left(  r\right)  =o\left(  r^{-1/(2m)}\right)  $ and condition
(S) holds.

If $m$ \textbf{is} an integer, the same is true without needing condition (S).
\end{theorem}

\begin{remark}
\label{noperem}The technical Condition (S) is not a restriction on the range
of regularity of $X$. Indeed, for all fBm's that are more irregular than
Brownian motion, we have $\left\vert \left(  \delta^{2}\right)  ^{\prime
\prime}\left(  r\right)  \right\vert =2H(1-2H)r^{-2}\delta^{2}\left(
r\right)  $ which is indeed decreasing, and the constant $c$ can be taken as
$1/4$ (this maximum is attained for $H=1/4$). For perturbations of the fBm
scale, where $\delta\left(  r\right)  $ is of the order $r^{2H}\log^{\beta
}\left(  1/r\right)  $ for some $\beta$, Condition (S) is also typically
satisfied. Beyond the H\"{o}lder scale, in cases where $\delta\left(
r\right)  $ is of the order $\log^{\beta}\left(  1/r\right)  $ with
$\beta<-1/2$, we will actually have a stronger upper-bound condition, of the
type $\left\vert \left(  \delta^{2}\right)  ^{\prime\prime}\left(  r\right)
\right\vert =o\left(  r^{-2}\delta^{2}\left(  r\right)  \right)  $. That
$\left\vert \left(  \delta^{2}\right)  ^{\prime\prime}\left(  r\right)
\right\vert $ be decreasing is typical of all Gaussian processes with
homogeneous increments, even those which are more regular than Brownian motion.
\end{remark}

\begin{proof}
[Proof of Theorem \ref{nope}]\noindent\emph{Step 0: setup.} Recall that with
$Y=X\left(  t+\varepsilon\right)  -X\left(  t\right)  $ and $Z=X\left(
s+\varepsilon\right)  -X\left(  s\right)  $, we have%
\[
\mathbf{E}\left[  \left(  [X,m]_{\varepsilon}\left(  T\right)  \right)
^{2}\right]  =\frac{2}{\varepsilon^{2}}\int_{0}^{T}\int_{0}^{t}dtds\mathbf{E}%
\left[  \left(  \left(  Y\right)  \right)  ^{m}\left(  \left(  Z\right)
\right)  ^{m}\right]  .
\]
Now introduce the shorthand notation $\sigma^{2}=Var\left[  Y\right]  $,
$\tau^{2}=Var\left[  Z\right]  $, and $\theta=\mathbf{E}\left[  YZ\right]
=\Theta^{\varepsilon}(s,t)$. Thus $Y=\sigma M$ where $M$ is a standard normal
r.v.. We can write the \textquotedblleft linear regression\textquotedblright%
\ expansion of $Z$ w.r.t. $Y$, using another standard normal r.v. $N$
independent of $M$:%
\[
Z=\frac{\theta}{\sigma}M+\rho\tau N
\]
where%
\[
\rho:=\left(  1-\left(  \frac{\theta}{\sigma\tau}\right)  ^{2}\right)
^{1/2}.
\]
Note that $\rho$ is always well-defined and positive by Cauchy-Schwarz's
inequality. Therefore%
\[
\left(  \left(  Y\right)  \right)  ^{m}\left(  \left(  Z\right)  \right)
^{m}=\sigma^{m}\left(  \left(  M\right)  \right)  ^{m}\left(  \left(
\frac{\theta}{\sigma}M+\rho\tau N\right)  \right)  ^{m}=\mbox{sgn}\left(
\theta\right)  \sigma^{2m}\left\vert \theta\right\vert ^{-m}\left(  \left(
a\right)  \right)  ^{m}\left(  \left(  a+b\right)  \right)  ^{m}%
\]
where%
\[
a:=\frac{\theta}{\sigma}M\qquad\mbox{and}\qquad b:=\rho\tau N.
\]
Applying Lemma \ref{lemmaabm}, we get that $\left(  \left(  Y\right)  \right)
^{m}\left(  \left(  Z\right)  \right)  ^{m}$ is the sum of the following four
expressions:%
\begin{align}
A  &  :=\mathbf{1}_{\left\vert \frac{\rho\tau\sigma N}{\theta M}\right\vert
<1}\sum_{k=0}^{\left[  m\right]  -1}\binom{m}{k}\mbox{sgn}^{k+1}\left(
\theta\right)  \left\vert \theta\right\vert ^{m-k}\sigma^{k}\tau^{k}\rho
^{k}\ \left\vert M\right\vert ^{2m-k}N^{k}\mbox{sgn}^{k}\left(  M\right)
\label{A}\\
A^{\prime}  &  :=\mbox{sgn}\left(  \theta\right)  \mathbf{1}_{\left\vert
\frac{\rho\tau\sigma N}{\theta M}\right\vert <1}\left\vert \theta\right\vert
^{m}\left\vert M\right\vert ^{2m}f_{m-1}\left(  \frac{\rho\tau\sigma N}{\theta
M}\right) \label{A'}\\
B  &  :=\mathbf{1}_{\left\vert \frac{\rho\tau\sigma N}{\theta M}\right\vert
>1}\sum_{k=0}^{\left[  m\right]  }\binom{m}{k}\mbox{sgn}^{k+2}\left(
\theta\right)  \left\vert \theta\right\vert ^{k}\sigma^{m-k}\tau^{m-k}%
\rho^{m-k}\mbox{sgn}^{k+1}\left(  NM\right)  \left\vert M\right\vert
^{m+k}\left\vert N\right\vert ^{m-k}\label{B}\\
B^{\prime}  &  :=\mbox{sgn}^{2}\left(  \theta\right)  \mathbf{1}_{\left\vert
\frac{\rho\tau\sigma N}{\theta M}\right\vert >1}\left(  \rho\sigma\tau\right)
^{m}\left(  MN\right)  ^{m}f_{m}\left(  \frac{\theta M}{\rho\tau\sigma
N}\right)  \label{B'}%
\end{align}

\noindent\emph{Step 1: cancellations in expectation calculation for }$A$\emph{
and }$B$. In evaluating the $\varepsilon$-$m$th variation $\mathbf{E}\left[
\left(  [X,m]_{\varepsilon}\left(  T\right)  \right)  ^{2}\right]  $, terms in
$A$ and $B$ containing odd powers of $M$ and $N$ will cancel, because of the
symmetry of the normal law, of the fact that the indicator functions in the
expressions for $A$ and $B$ above are even functions of $M$ and of $N$, and of
their independence. Hence we can perform the following calculations, where
$a_{m,k}:=\mathbf{E}\left[  \left\vert M\right\vert ^{2m-k}\left\vert
N\right\vert ^{k}\right]  $ and $b_{m,k}:=\mathbf{E}\left[  \left\vert
M\right\vert ^{m+k}\left\vert N\right\vert ^{m-k}\right]  $ are positive
constants depending only on $m$ and $k$.\vspace*{0.1in}

\noindent\emph{Step 1.1: expectation of }$A$. In this case, because of the
term $N^{k}$, the expectation of all the terms in (\ref{A}) with $k$ odd drop
out. We can expand the term $\rho^{k}$ using the binomial formula, and then
perform a change of variables. We then have, with $n=\left[  m\right]  -2$
when $\left[  m\right]  $ is even, or $n=\left[  m\right]  -1$ when $\left[
m\right]  $ is odd,%
\begin{align*}
\left\vert \mathbf{E}\left[  A\right]  \right\vert  &  \leq\sum
_{\substack{k=0\\k\ even}}^{\left[  m\right]  -1}\binom{m}{k}\left\vert
\theta\right\vert ^{m-k}\sigma^{k}\tau^{k}a_{m,k}\sum_{\ell=0}^{k/2}\binom
{k}{\ell}\left(  -1\right)  ^{\ell}\left(  \frac{\theta}{\sigma\tau}\right)
^{2\ell}\\
&  =\sum_{j=0}^{n/2}\left\vert \theta\right\vert ^{m-2j}\left(  \sigma
\tau\right)  ^{2j}\sum_{\substack{k=2j\\k\ even}}^{n}\binom{m}{k}\binom
{k}{k/2-j}\left(  -1\right)  ^{k/2-j}a_{m,k}\\
&  \leq\sum_{j=0}^{n/2}\left\vert \theta\right\vert ^{m-2j}\left(  \sigma
\tau\right)  ^{2j}c_{m,j}%
\end{align*}
where $c_{m,j}$ are positive constant depending only on $m$ and $j$. In all
cases, the portion of $\mathbf{E}\left[  \left(  [X,m]_{\varepsilon}\left(
T\right)  \right)  ^{2}\right]  $ corresponding to $A$ can be treated using
the same method as in the proof of Theorem \ref{HomogGauss}. More precisely,
after multiplying by $\varepsilon^{-2}$ and integrating over $s$ and $t$, as
we should, each term in the last sum above is of the same form as the term
$J_{j}$ in the proof of Theorem \ref{HomogGauss}, whose upper-estimation is
the subject of Steps 1, 2, and 3 in that proof. The lowest power is attained
when $j=n/2$, i.e., when $\left[  m\right]  $ is even we have $\left\vert
\theta\right\vert ^{2+m-\left[  m\right]  }$, and when $\left[  m\right]  $ is
odd, we have $\left\vert \theta\right\vert ^{1+m-\left[  m\right]  }$. In both
cases, the power is greater than $1$. All other values of $j$ correspond of
course to higher powers of $\left\vert \theta\right\vert $. This means we can
use Cauchy-Schwarz's inequality to get the bound, valid for all $j$,
\[
\left\vert \theta\right\vert ^{m-2j}\left(  \sigma\tau\right)  ^{2j}%
=\left\vert \theta\right\vert \left\vert \theta\right\vert ^{m-2j-1}\left(
\sigma\tau\right)  ^{2j}\leq\left\vert \theta\right\vert \left(  \sigma
\tau\right)  ^{m-1},
\]
and we are back to the situation solved in the proof of Theorem
\ref{HomogGauss}, which corresponded therein to the case $j=(m-1)/2$ when $m$
is was odd integer. Thus the portion of $\mathbf{E}\left[  \left(
[X,m]_{\varepsilon}\left(  T\right)  \right)  ^{2}\right]  $ corresponding to
$A$ tends to $0$ as $\varepsilon\rightarrow0$, as long as $\delta\left(
r\right)  =o\left(  r^{1/\left(  2m\right)  }\right)  $.\vspace*{0.1in}

\noindent\emph{Step 1.2: expectation of }$B$. This portion is dealt with
similarly. Because of the term $\mbox{sgn}^{k+1}\left(  N\right)  $, the
expectation of all the terms in (\ref{B}) with $k$ even drop out. Contrary to
the case of $A$, we do not need to expand $\rho^{m-k}$ in a binomial series.
Since $k$ is now $\geq1$, we simply use Cauchy-Schwarz's inequality to write
$\left\vert \theta\right\vert ^{k}\leq\left\vert \theta\right\vert \left(
\sigma\tau\right)  ^{k-1}$. Of course, we also have $\rho<1$. Hence
\begin{align}
\left\vert \mathbf{E}\left[  B\right]  \right\vert  &  =\left\vert
\mathbf{E}\left[  \mathbf{1}_{\left\vert \frac{\rho\tau\sigma N}{\theta
M}\right\vert >1}\sum_{\substack{k=1\\k\ odd}}^{\left[  m\right]  }\binom
{m}{k}\mbox{sgn}^{k+2}\left(  \theta\right)  \left\vert \theta\right\vert
^{k}\sigma^{m-k}\tau^{m-k}\rho^{m-k}\mbox{sgn}^{k+1}\left(  MN\right)
\left\vert M\right\vert ^{m+k}\left\vert N\right\vert ^{m-k}\right]
\right\vert \nonumber\\
&  \leq\left\vert \theta\right\vert \left(  \sigma\tau\right)  ^{m-1}%
\sum_{\substack{k=1\\k\ odd}}^{\left[  m\right]  }b_{m,k}\binom{m}{k}.
\label{goodineq}%
\end{align}
We see here in all cases that we are exactly in the same situation of the
proof of Theorem \ref{HomogGauss} (again, power of $\left\vert \theta
\right\vert $ is $\left\vert \theta\right\vert ^{1}$). Thus the portion of
$\mathbf{E}\left[  \left(  [X,m]_{\varepsilon}\left(  T\right)  \right)
^{2}\right]  $ corresponding to $B$ converges to $0$ as soon as $\delta
^{2}\left(  r\right)  =o\left(  r^{1/\left(  2m\right)  }\right)  $%
.\vspace*{0.1in}

\noindent\emph{Step 2. The error term }$A^{\prime}$. For $A^{\prime}$ given in
(\ref{A'}), we immediately have%
\begin{align*}
\left\vert \mathbf{E}\left[  A^{\prime}\right]  \right\vert  &  =\left\vert
\mathbf{E}\left[  \mathbf{1}_{\left\vert \frac{\rho\tau\sigma N}{\theta
M}\right\vert <1}\left\vert \theta\right\vert ^{m}\left\vert M\right\vert
^{2m}f_{m-1}\left(  \frac{\rho\tau\sigma N}{\theta M}\right)  \right]
\right\vert \\
&  \leq c_{m}\left\vert \theta\right\vert ^{m}\mathbf{E}\left[  \mathbf{1}%
_{\left\vert \frac{\rho\tau\sigma N}{\theta M}\right\vert <1}\left\vert
M\right\vert ^{2m}\left\vert \frac{\rho\tau\sigma N}{\theta M}\right\vert
^{\left[  m\right]  }\right] \\
&  =c_{m}\left\vert \theta\right\vert ^{m-\left[  m\right]  }\left(  \rho
\tau\sigma\right)  ^{\left[  m\right]  }\mathbf{E}\left[  \mathbf{1}%
_{\left\vert \frac{\rho\tau\sigma N}{\theta M}\right\vert <1}\left\vert
M\right\vert ^{2m-\left[  m\right]  }\left\vert N\right\vert ^{\left[
m\right]  }\right]  `.
\end{align*}
We see here that we cannot ignore the indicator function inside the
expectation, because if we did we would be left with $\left\vert
\theta\right\vert $ to the power $m-\left[  m\right]  $, which is less than
$1$, and therefore does not allow us to use the proof of Theorem
\ref{HomogGauss}.

To estimate the expectation, let $x=\frac{\rho\tau\sigma}{\left\vert
\theta\right\vert }$. We can use H\"{o}lder's inequality for a conjugate pair
$p,q$ with $p$ very large, to write
\[
\mathbf{E}\left[  \mathbf{1}_{\left\vert \frac{\rho\tau\sigma N}{\theta
M}\right\vert <1}\left\vert M\right\vert ^{2m-\left[  m\right]  }\left\vert
N\right\vert ^{\left[  m\right]  }\right]  \leq\mathbf{P}^{1/q}\left[
\left\vert xN\right\vert <\left\vert M\right\vert \right]  \ \mathbf{E}%
^{1/p}\left[  \left\vert M\right\vert ^{2mp-\left[  m\right]  p}\left\vert
N\right\vert ^{\left[  m\right]  p}\right]  .
\]
The second factor in the right-hand side above is a constant $c_{m,p}$
depending only on $m$ and $p$. For the first factor, we can use the following
standard estimate for all $y>0$: $\int_{y}^{\infty}e^{-z^{2}/2}dz\leq
\mathrm{cst} y^{-1}e^{-y^{2}/2}$. Therefore,
\begin{align*}
\mathbf{P}\left[  \left\vert xN\right\vert <\left\vert M\right\vert \right]
&  =2\int_{0}^{\infty}\frac{du}{\sqrt{2\pi}}e^{-u^{2}/2}\mathbf{P}\left[
\left\vert M\right\vert >xu\right]  \leq\sqrt{\frac{2}{\pi}}\int_{0}%
^{1/x}du\ e^{-u^{2}/2}+\sqrt{\frac{2}{\pi}}\int_{1/x}^{\infty}du\frac
{e^{-u^{2}/2}}{ux}e^{-u^{2}x^{2}/2}\\
&  \leq\sqrt{2/\pi}\frac{1}{x}+\sqrt{2/\pi}\int_{1/x}^{\infty}du\frac{1}%
{ux}e^{-u^{2}x^{2}/2}=\sqrt{2/\pi}\left(  \frac{1}{x}+\frac{1}{x}\int
_{1}^{\infty}dv\frac{1}{v}e^{-v^{2}/2}\right)  =\frac{c}{x}%
\end{align*}
where $c$ is a universal constant.

Now choose $q$ so that $m-\left[  m\right]  +1/q=1$, i.e. $q=\left(
1-m+\left[  m\right]  \right)  ^{-1}$, which exceeds $1$ as long as $m$ is not
an integer. Then we get%
\[
\left\vert \mathbf{E}\left[  A^{\prime}\right]  \right\vert \leq
c_{m}\left\vert \theta\right\vert ^{m-\left[  m\right]  }\left(  \rho
\tau\sigma\right)  ^{\left[  m\right]  }c_{m,p}\left(  \frac{c\left\vert
\theta\right\vert }{\rho\sigma\tau}\right)  ^{1/q}=c_{m}c_{m,p}c^{1/q}%
\left\vert \theta\right\vert \left(  \rho\tau\sigma\right)  ^{m-1},
\]
and we are again back to the usual computations. The case of $m$ integer is
dealt with in Step 4 below.\vspace*{0.1in}

\noindent\emph{Step 3. The error term }$B^{\prime}$. For $B^{\prime}$ in
(\ref{B'}), we have
\begin{align*}
\left\vert \mathbf{E}\left[  B^{\prime}\right]  \right\vert  &  \leq
c_{m}\sigma^{m}\tau^{m}\rho^{m}\mathbf{E}\left[  \left\vert MN\right\vert
^{m}\left\vert \frac{\theta M}{\rho\tau\sigma N}\right\vert ^{\left[
m\right]  +1}\right] \\
&  =c_{m}\left(  \rho\sigma\tau\right)  ^{m-\left[  m\right]  -1}\left\vert
\theta\right\vert ^{1+\left[  m\right]  }\mathbf{E}\left[  \left\vert
M\right\vert ^{m+\left[  m\right]  +1}\left\vert N\right\vert ^{-1+m-\left[
m\right]  }\right]  .
\end{align*}
The expectation above is a constant $c_{m}^{\prime}$ depending only on $m$ as
long as $m$ is not an integer. The case of $m$ integer is trivial since then
we have $B^{\prime}=0$. Now we can use Cauchy-Schwarz's inequality to say that
$\left\vert \theta\right\vert ^{\left[  m\right]  }\leq\left(  \sigma
\tau\right)  ^{\left[  m\right]  }$, yielding%
\[
\left\vert \mathbf{E}\left[  B^{\prime}\right]  \right\vert \leq c_{m}%
c_{m}^{\prime}\rho^{m-\left[  m\right]  -1}\left(  \sigma\tau\right)
^{m-1}\left\vert \theta\right\vert ^{1}.
\]
This is again identical to the terms we have already dealt with, but for the
presence of the negative power on $\rho$. We will handle this complication by
showing that $\rho$ can be bounded below by a universal constant.

First note that integration on the $\varepsilon$-diagonal can be handled by
using the same argument as in Steps 1 and 2 of the proof of Theorem
\ref{HomogGauss}. Thus we can assume that $t\geq s+2\varepsilon$. Now that we
are off the diagonal, note that using the mean value theorem on the expression
for $\theta$ in (\ref{seeplanar}), we can write that $\theta=\varepsilon
^{2}\left(  \delta^{2}\right)  ^{\prime\prime}\left(  \xi\right)  $ for some
$\xi$ in $[t-s-\varepsilon,t-s+\varepsilon]$. At this point, Condition (S)
allows us to say first that $\left\vert \left(  \delta^{2}\right)
^{\prime\prime}\left(  r\right)  \right\vert \leq cr^{-2}\delta^{2}\left(
r\right)  $. We now use the expression $\sigma\tau=\delta^{2}\left(
\varepsilon\right)  $, and the fact that off the diagonal, $\xi>\varepsilon$,
combined with the fact that $\left\vert \left(  \delta^{2}\right)
^{\prime\prime}\right\vert $ is decreasing according to Condition (S),\ to
write $\left\vert \theta/\left(  \sigma\tau\right)  \right\vert \leq
\varepsilon^{2}\left\vert \left(  \delta^{2}\right)  ^{\prime\prime}\left(
\varepsilon\right)  \right\vert \delta^{-2}\left(  \varepsilon\right)  \leq
c$. Recalling the definition of $\rho:=\left(  1-\theta^{2}\left(  \sigma
\tau\right)  ^{-2}\right)  ^{1/2}$, we have proved that $\rho$ is bounded
below uniformly (off the $\varepsilon$-diagonal) by the positive constant
$c^{\prime\prime}:=\left(  1-c^{2}\right)  ^{1/2}$. Hence, in the inequality
for $\left\vert \mathbf{E}\left[  B^{\prime}\right]  \right\vert $ above, the
term $\rho^{m-\left[  m\right]  -1}$ can be absorbed into the $m$-dependent
constants. In other words, we have proved the upper bound $\left\vert
\mathbf{E}\left[  B^{\prime}\right]  \right\vert \leq c_{m}c_{m}^{\prime
}\left(  c^{\prime\prime}\right)  ^{m-[m]-1}\left(  \sigma\tau\right)
^{m-1}\left\vert \theta\right\vert ^{1}$, and we are back once again to the
situation solved in the proof of Theorem \ref{HomogGauss}, proving the
corresponding contribution of $B^{\prime}$ to $\mathbf{E}\left[  \left(
[X,m]_{\varepsilon}\left(  T\right)  \right)  ^{2}\right]  $ converges to $0$
as soon as $\delta^{2}\left(  r\right)  =o\left(  r^{1/\left(  2m\right)
}\right)  $.\vspace*{0.1in}

\noindent\emph{Step 4. The case of }$m$\emph{ integer}. Of course, we already
proved the theorem in the case $m$ odd. Now assume $m$ is an even integer. In
this special case, we do not need to use a Taylor expansion, since the
binomial formula has no remainder. Moreover, on the event $\left\vert
\frac{\rho\tau\sigma N}{\theta M}\right\vert <1$, $\mbox{sgn}\left(
a+b\right)  =\mbox{sgn}\left(  a\right)  $. Thus $A^{\prime}=0$ with the
understanding that we must replace $A$ by the full sum for $k=0$ to $m$.
Recalculating this $A$ we get%
\begin{align*}
A  &  =\sigma^{m}\mbox{sgn}\left(  \theta\right)  \mbox{sgn}\left(  M\right)
\left\vert M\right\vert ^{m}\mathbf{1}_{\left\vert \frac{\rho\tau\sigma
N}{\theta M}\right\vert <1}\sum_{k=0}^{m}\binom{m}{k}M^{k}N^{m-k}\left[
\frac{\theta}{\sigma}\right]  ^{k}\left[  \rho\tau\right]  ^{m-k}\\
&  =\mbox{sgn}\left(  \theta\right)  \mathbf{1}_{\left\vert \frac{\rho
\tau\sigma N}{\theta M}\right\vert <1}\sum_{k=0}^{m}\binom{m}{k}%
\mbox{sgn}^{k+1}\left(  M\right)  \left\vert M\right\vert ^{m+k}N^{m-k}%
\theta^{k}\rho^{m-k}\sigma^{m-k}\tau^{m-k}.
\end{align*}
Here, when we take the expectation $\mathbf{E}$, all terms vanish since we
have odd functions of $M$ for $k$ even thanks to the term $\mbox{sgn}^{k+1}%
\left(  M\right)  $, and odd functions of $N$ for $k$ odd thanks to the term
$N^{m-k}$. I.e. the term corresponding to $A$ is entirely null when $m$ is
even. The term $B^{\prime}$ is null since we have no error terms in the Taylor
expansion. The estimation of the term $B$ in Step 1.2 above applied when $m$
is an integer. The proof of the theorem is finished.
\end{proof}

\section{Non-Gaussian case\label{NGC}}

Now assume that $X$ is given by (\ref{defX}) and $M$ is a square-integrable
(non-Gaussian) martingale, $m$ is an odd integer, and define a positive
non-random measure $\mu$ for $\bar{s}=\left(  s_{1},s_{2},\cdots,s_{m}\right)
\in\lbrack0,T]^{m}$ by%
\begin{equation}
\mu\left(  d\bar{s}\right)  =\mu\left(  ds_{1}ds_{2}\cdots ds_{m}\right)
=\mathbf{E}\left[  d\left[  M\right]  \left(  s_{1}\right)  d\left[  M\right]
\left(  s_{2}\right)  \cdots d\left[  M\right]  \left(  s_{m}\right)  \right]
, \label{mu}%
\end{equation}
where $\left[  M\right]  $ is the quadratic variation process of $M$. We make
the following assumption on $\mu$.

\begin{description}
\item[(A)] The non-negative measure $\mu$ is absolutely continuous with
respect to the Lebesgue measure $d\bar{s}$ on $[0,T]^{m}$ and $K\left(
\bar{s}\right)  :=d\mu/d\bar{s}$ is bounded by a tensor-power function: $0\leq
K\left(  s_{1},s_{2},\cdots,s_{m}\right)  \leq\Gamma^{2}\left(  s_{1}\right)
\Gamma^{2}\left(  s_{2}\right)  \cdots\Gamma^{2}\left(  s_{m}\right)  $ for
some non-negative function $\Gamma$ on $[0,T]$.
\end{description}

A large class of processes satisfying (A) is the case where $M\left(
t\right)  =\int_{0}^{t}H\left(  s\right)  dW\left(  s\right)  $ where $H\in
L^{2}\left(  [0,T]\times\Omega\right)  $ and $W$ is a standard Wiener process,
and we assume $\mathbf{E}\left[  H^{2m}\left(  t\right)  \right]  $ is finite
for all $t\in\lbrack0,T]$. Indeed then by H\"{o}lder's inequality, since we
can take $K\left(  \bar{s}\right)  =\mathbf{E}\left[  H^{2}\left(
s_{1}\right)  H^{2}\left(  s_{2}\right)  \cdots H^{2}\left(  s_{m}\right)
\right]  $, we see that $\Gamma\left(  s\right)  =$ $\left(  \mathbf{E}\left[
H^{2m}\left(  t\right)  \right]  \right)  ^{1/\left(  2m\right)  }$ works.

We will show that the sufficient conditions for zero odd variation in the
Gaussian cases generalize to the case of condition (A), by associating $X$
with a Gaussian process. We let%
\[
\tilde{G}\left(  t,s\right)  =\Gamma\left(  s\right)  G\left(  t,s\right)
\]
and
\begin{equation}
Z\left(  t\right)  :=\int_{0}^{T}\tilde{G}\left(  t,s\right)  dW\left(
s\right)  . \label{Zee}%
\end{equation}
We have the following.

\begin{theorem}
\label{MartThm}Let $m$ be an odd integer $\geq3$. Let $X$ and $Z$ be as
defined in (\ref{defX}) and (\ref{Zee}). Assume $M$ satisfies condition
\emph{(A)} and $Z$ is well-defined and satisfies the hypotheses of Theorem
\ref{HomogGauss} or Theorem \ref{nonhomoggauss} relative to a univariate
function $\delta$. Assume that for some constant $c>0$, and every small
$\varepsilon>0$,%
\begin{equation}
\int_{t=2\varepsilon}^{T}dt\int_{s=0}^{t-2\varepsilon}ds\int_{u=0}%
^{T}\left\vert \Delta\tilde{G}_{t}\left(  u\right)  \right\vert \left\vert
\Delta\tilde{G}_{s}\left(  u\right)  \right\vert du\leq c\varepsilon\delta
^{2}\left(  2\varepsilon\right)  , \label{additional}%
\end{equation}
where we use the notation $\Delta\tilde{G}_{t}\left(  u\right)  =\tilde
{G}\left(  t+\varepsilon,u\right)  -\tilde{G}\left(  t,u\right)  $. Then $X$
has zero $m$th variation.
\end{theorem}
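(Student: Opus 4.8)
The plan is to estimate the second moment $\mathbf{E}\big[\big([X,m]_\varepsilon(T)\big)^2\big]$ and show it tends to $0$ as $\varepsilon\to0$. Since $m$ is odd we have $(x)^m=x^m$, so writing $\Delta G_s(u)=G(s+\varepsilon,u)-G(s,u)$ gives $[X,m]_\varepsilon(T)=\varepsilon^{-1}\int_0^T\big(\int_0^T\Delta G_s(u)\,dM(u)\big)^m\,ds$, and everything reduces to controlling the joint moment $\mathbf{E}\big[\big(\int_0^T\Delta G_s\,dM\big)^m\big(\int_0^T\Delta G_t\,dM\big)^m\big]$ integrated against $\varepsilon^{-2}\,ds\,dt$ over $\{0\le s\le t\le T\}$.

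First I would establish a ``Step 0'' expansion, the martingale analogue of Lemma \ref{lemma1} (and of Lemma \ref{cubechaos}): iterate It\^{o}'s formula on $x\mapsto x^m$ and on products of such powers, using $d[N_s]=\Delta G_s^2\,d[M]$ and $d[N_s,N_t]=\Delta G_s\Delta G_t\,d[M]$ for $N_s(r)=\int_0^r\Delta G_s\,dM$, then take expectations, the surviving genuinely-martingale pieces being absorbed by the Burkh\"{o}lder--Davis--Gundy inequalities. The expected outcome is that $\mathbf{E}\big[\big([X,m]_\varepsilon(T)\big)^2\big]$ equals, up to constants, $\varepsilon^{-2}\sum_{j=0}^{(m-1)/2}c_j\int_0^T\!\int_0^t ds\,dt\int_{[0,T]^m}(\prod_{i=1}^{j}\Delta G_s(u_i)^2)(\prod_{i=j+1}^{2j}\Delta G_t(u_i)^2)(\prod_{i=2j+1}^{m}\Delta G_s(u_i)\Delta G_t(u_i))\,\mu(d\bar{u})$, with $\mu$ the measure in \eqref{mu} and the $c_j$ exactly those of Lemma \ref{lemma1} (specializing to $M=W$, $G\mapsto\tilde{G}$, for which $\mu(d\bar{u})=\prod_i du_i$, recovers Lemma \ref{lemma1} for $Z$). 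This Step 0 is where I expect the main obstacle: the argument is elementary but combinatorially heavy, and the non-Gaussian feature, that $d[M]$ is itself random so joint moments of $M$ are not Wick products of its bracket, means the bookkeeping of which differentials contract and which survive must be carried out carefully, with BDG controlling the surviving-martingale contributions.

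Given Step 0, the rest is robust. Condition (M) gives $0\le K(\bar{u})\le\prod_i\Gamma^2(u_i)$, and since $\Gamma(u)\Delta G_s(u)=\Delta\tilde{G}_s(u)$, taking absolute values inside the $[0,T]^m$ integral (which factors) bounds the $j$th term by $|c_j|\,\tilde\delta^{2j}(s,s+\varepsilon)\,\tilde\delta^{2j}(t,t+\varepsilon)\,\widehat\Theta^\varepsilon(s,t)^{m-2j}$, where $\tilde\delta^2(s,s+\varepsilon):=\int_0^T|\Delta\tilde{G}_s(u)|^2\,du$ is the squared canonical metric of $Z$, bounded by the univariate $\delta^2(\varepsilon)$ of the hypothesis (which satisfies $\delta(r)=o(r^{1/(2m)})$ since $Z$ satisfies Theorem \ref{HomogGauss} or \ref{nonhomoggauss}), and $\widehat\Theta^\varepsilon(s,t):=\int_0^T|\Delta\tilde{G}_s(u)\,\Delta\tilde{G}_t(u)|\,du\ge0$. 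Thus $\mathbf{E}\big[\big([X,m]_\varepsilon(T)\big)^2\big]$ is dominated by precisely the expression appearing in the proofs of those two theorems for $Z$, except that the nonnegative $\widehat\Theta^\varepsilon$ replaces the signed planar increment $\Theta^\varepsilon$; this is exactly why one re-runs those proofs with absolute values rather than quoting their conclusions, and why hypothesis \eqref{additional} is assumed.

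Finally I would split into the diagonal $\{0\le t-2\varepsilon\le s\le t\le T\}$ (plus the trivial piece $t\le2\varepsilon$) and the off-diagonal $OD=\{0\le s\le t-2\varepsilon\le T\}$, exactly as in those proofs. On the diagonal, Schwartz's inequality gives $\widehat\Theta^\varepsilon(s,t)\le\tilde\delta(s,s+\varepsilon)\tilde\delta(t,t+\varepsilon)\le\delta^2(\varepsilon)$, so that contribution is at most $cst\cdot\varepsilon^{-1}\delta^{2m}(\varepsilon)\to0$. On $OD$, since $\widehat\Theta^\varepsilon\ge0$ there is no sign obstruction: bounding $m-2j-1\ge0$ of the factors $\widehat\Theta^\varepsilon$ and both variance factors by $\delta^2(\varepsilon)$ reduces every $J_{j,OD}$ to $cst\cdot\delta^{2m-2}(\varepsilon)\,\varepsilon^{-2}\int_{2\varepsilon}^T dt\int_0^{t-2\varepsilon}\widehat\Theta^\varepsilon(s,t)\,ds$, and hypothesis \eqref{additional} bounds the double integral by $c\,\varepsilon\,\delta^2(2\varepsilon)$; by monotonicity of $\delta$ the whole off-diagonal contribution is then at most $cst\cdot\varepsilon^{-1}\delta^{2m-2}(\varepsilon)\delta^2(2\varepsilon)\le cst\cdot\varepsilon^{-1}\delta^{2m}(2\varepsilon)\to0$. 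Combining, $\mathbf{E}\big[\big([X,m]_\varepsilon(T)\big)^2\big]\le cst\cdot\varepsilon^{-1}\delta^{2m}(2\varepsilon)\to0$, which is the claim.
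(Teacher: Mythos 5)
Your overall route is the paper's route: expand the $m$th power of the martingale increment via iterated It\^{o} formula, use Condition (M) to dominate the resulting moment measure by the tensor power $\prod_i\Gamma^2(u_i)\,du_i$ so that everything reduces to the Gaussian-type kernel $\tilde G$, then split diagonal/off-diagonal and close the off-diagonal with Schwartz on all but one factor plus hypothesis (\ref{additional}). Your Steps 2--3 (the reduction to $\tilde\delta$, $\widehat\Theta^\varepsilon$, and the final bound $cst\cdot\varepsilon^{-1}\delta^{2m}(2\varepsilon)$) match the paper's Steps 1--4 and are fine.

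The genuine gap is the exact identity you assert in Step 0. For a general square-integrable martingale $M$, $\mathbf{E}\big[\big(\int\Delta G_s\,dM\big)^m\big(\int\Delta G_t\,dM\big)^m\big]$ is \emph{not} equal to $\sum_j c_j\int_{[0,T]^m}(\cdots)\,\mu(d\bar u)$ with the Gaussian constants $c_j$ of Lemma \ref{lemma1}. The martingale power expansion writes each factor as a sum over $k$ of iterated integrals containing $k$ bracket differentials $d[M]$ and $m-2k$ martingale differentials $dM$; your formula keeps only the ``diagonal'' products $k_1=k_2=j$. The cross terms $k_1\neq k_2$ do not vanish in expectation, precisely because $d[M]$ is random and correlated with $M$: already for $m=2$ one has $\mathbf{E}\big[\int f^2\,d[M]\cdot\int\big(\int g\,dM\big)g\,dM\big]\neq 0$ in general (take $M=\int H\,dW$ with $H$ adapted and non-deterministic). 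These cross terms vanish only when $[M]$ is deterministic --- that is the Wiener chaos orthogonality you are implicitly importing from the Gaussian case --- and BDG gives bounds, not cancellation, so it cannot restore the identity. The paper sidesteps this entirely: it never computes the second moment exactly, but writes $[X,m]_\varepsilon(T)$ as a finite sum of iterated-integral terms indexed by $(k,\sigma)$ and bounds the expected square by the sum of the expected squares of the individual terms (Cauchy--Schwarz across the expansion, applied \emph{before} any expectation of a cross product is evaluated); each such expected square is then of the form (\ref{crazycat}), which is exactly the ``diagonal'' expression your formula produces. With that repair your remaining estimates go through unchanged, so the gap is reparable, but as written your Step 0 identity is false and the rest of the argument is built on it.
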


\begin{proof}

\noindent\emph{Step 0: setup.} We use an expansion for powers of martingales
written explicitly at Corollary 2.18 of \cite{RE}. For any integer
$k\in\lbrack0,\left[  m/2\right]  ]$, let $\Sigma_{m}^{k}$ be the set of
permutations $\sigma$ of $m-k$ defined as those for which the first $k$ terms
$\sigma^{-1}\left(  1\right)  ,\sigma^{-1}\left(  2\right)  ,\cdots
,\sigma^{-1}\left(  k\right)  $ are chosen arbitrarily and the next $m-2k$
terms are chosen arbitrarily among the remaining integers $\left\{
1,2,\cdots,m-k\right\}  \setminus\left\{  \sigma^{-1}\left(  1\right)
,\sigma^{-1}\left(  2\right)  ,\cdots,\sigma^{-1}\left(  k\right)  \right\}
$. Let $Y$ be a fixed square-integrable martingale. We define the process
$Y_{\sigma,\ell}$ (denoted in the above reference by $\sigma_{Y}^{\ell}$) by
setting, for each $\sigma\in\Sigma_{m}^{k}$ and each $\ell=1,2,\cdots,m-k$,%
\[
Y_{\sigma,\ell}\left(  t\right)  =\left\{
\begin{array}
[c]{c}%
\left[  Y\right]  \left(  t\right)  \ \mbox{ if }\ \sigma\left(  \ell\right)
\in\left\{  1,2,\cdots,k\right\} \\
Y\left(  t\right)  \ \mbox{ if }\ \sigma\left(  \ell\right)  \in\left\{
k+1,\cdots,m-k\right\}  .
\end{array}
\right.
\]
From Corollary 2.18 of \cite{RE}, we then have for all $t\in\lbrack0,T]$%
\[
\left(  Y_{t}\right)  ^{m}=\sum_{k=0}^{[m/2]}\frac{m!}{2^{k}}\sum_{\sigma
\in\Sigma_{m}^{k}}\int_{0}^{t}\int_{0}^{u_{m-k}}\cdots\int_{0}^{u_{2}%
}dY_{\sigma,1}\left(  u_{1}\right)  \ dY_{\sigma,2}\left(  u_{2}\right)
\cdots dY_{\sigma,m-k}\left(  u_{m-k}\right)  .
\]

We use this formula to evaluate%
\[
\lbrack X,m]_{\varepsilon}\left(  T\right)  =\frac{1}{\varepsilon}\int_{0}%
^{T}ds\left(  X\left(  s+\varepsilon\right)  -X\left(  s\right)  \right)  ^{m}%
\]
by noting that the increment $X\left(  s+\varepsilon\right)  -X\left(
s\right)  $ is the value at time $T$ of the martingale $Y_{t}:=\int_{0}%
^{t}\Delta G_{s}\left(  u\right)  dM\left(  u\right)  $ where we set%
\[
\Delta G_{s}\left(  u\right)  :=G\left(  s+\varepsilon,u\right)  -G\left(
s,u\right)  .
\]
Hence%
\begin{align*}
&  \left(  X\left(  s+\varepsilon\right)  -X\left(  s\right)  \right)  ^{m}\\
&  =\sum_{k=0}^{[m/2]}\frac{m!}{2^{k}}\sum_{\sigma\in\Sigma_{m}^{k}}\int
_{0}^{T}\int_{0}^{u_{m-k}}\cdots\int_{0}^{u_{2}}d\left[  M\right]  \left(
u_{\sigma\left(  1\right)  }\right)  \left\vert \Delta G_{s}\left(
u_{\sigma\left(  1\right)  }\right)  \right\vert ^{2}\cdots d\left[  M\right]
\left(  u_{\sigma\left(  k\right)  }\right)  \left\vert \Delta G_{s}\left(
u_{\sigma\left(  k\right)  }\right)  \right\vert ^{2}\\
&  dM\left(  u_{\sigma\left(  k+1\right)  }\right)  \Delta G_{s}\left(
u_{\sigma\left(  k+1\right)  }\right)  \cdots dM\left(  u_{\sigma\left(
m-k\right)  }\right)  \Delta G_{s}\left(  u_{\sigma\left(  m-k\right)
}\right)  .
\end{align*}
Therefore we can write%
\begin{align*}
&  [X,m]_{\varepsilon}\left(  T\right) \\
&  =\frac{1}{\varepsilon}\sum_{k=0}^{[m/2]}\frac{m!}{2^{k}}\sum_{\sigma
\in\Sigma_{m}^{k}}\int_{0}^{T}\int_{0}^{u_{m-k}}\cdots\int_{0}^{u_{2}}d\left[
M\right]  \left(  u_{\sigma\left(  1\right)  }\right)  \cdots d\left[
M\right]  \left(  u_{\sigma\left(  k\right)  }\right)  dM\left(
u_{\sigma\left(  k+1\right)  }\right)  \cdots dM\left(  u_{\sigma\left(
m-k\right)  }\right) \\
&  \left[  \Delta G_{\cdot}\left(  u_{\sigma\left(  k+1\right)  }\right)
;\cdots;\Delta G_{\cdot}\left(  u_{\sigma\left(  m-k\right)  }\right)  ;\Delta
G_{\cdot}\left(  u_{\sigma\left(  1\right)  }\right)  ;\Delta G_{\cdot}\left(
u_{\sigma\left(  1\right)  }\right)  ;\cdots;\Delta G_{\cdot}\left(
u_{\sigma\left(  k\right)  }\right)  ;\Delta G_{\cdot}\left(  u_{\sigma\left(
k\right)  }\right)  \right]  ,
\end{align*}
where we have used the notation
\[
\left[  f_{1},f_{2},\cdots,f_{m}\right]  :=\int_{0}^{T}f_{1}\left(  s\right)
f_{2}\left(  s\right)  \cdots f_{m}\left(  s\right)  ds.
\]
To calculate the expected square of the above, we will bound it above by the
sum over $k$ and $\sigma$ of the expected square of each term. Writing squares
of Lebesgue integrals as double integrals, and using It\^{o}'s formula, each
term's expected square is thus, up to $\left(  m,k\right)  $-dependent
multiplicative constants, equal to the expression%
\begin{align}
K  &  =\frac{1}{\varepsilon^{2}}\int_{u_{m-k}=0}^{T}\int_{u_{m-k}^{\prime}%
=0}^{T}\int_{u_{m-k-1}=0}^{u_{m-k}}\int_{u_{m-k-1}^{\prime}=0}^{u_{m-k}}%
\cdots\int_{u_{1}=0}^{u_{2}}\int_{u_{1}^{\prime}=0}^{u_{2}}\nonumber\\
&  \mathbf{E}\left[  d\left[  M\right]  ^{\otimes k}\left(  u_{\sigma\left(
1\right)  },\cdots,u_{\sigma\left(  k\right)  }\right)  d\left[  M\right]
^{\otimes k}\left(  u_{\sigma\left(  1\right)  }^{\prime},\cdots
,u_{\sigma\left(  k\right)  }^{\prime}\right)  d\left[  M\right]
^{\otimes\left(  m-2k\right)  }\left(  u_{\sigma\left(  k+1\right)  }%
,\cdots,u_{\sigma\left(  m-k\right)  }\right)  \right] \nonumber\\
&  \cdot\left[  \Delta G_{\cdot}\left(  u_{\sigma\left(  k+1\right)  }\right)
;\cdots;\Delta G_{\cdot}\left(  u_{\sigma\left(  m-k\right)  }\right)  ;\Delta
G_{\cdot}\left(  u_{\sigma\left(  1\right)  }\right)  ;\Delta G_{\cdot}\left(
u_{\sigma\left(  1\right)  }\right)  ;\cdots;\Delta G_{\cdot}\left(
u_{\sigma\left(  k\right)  }\right)  ;\Delta G_{\cdot}\left(  u_{\sigma\left(
k\right)  }\right)  \right] \nonumber\\
&  \cdot\left[  \Delta G_{\cdot}\left(  u_{\sigma\left(  k+1\right)  }\right)
;\cdots;\Delta G_{\cdot}\left(  u_{\sigma\left(  m-k\right)  }\right)  ;\Delta
G_{\cdot}\left(  u_{\sigma\left(  1\right)  }^{\prime}\right)  ;\Delta
G_{\cdot}\left(  u_{\sigma\left(  1\right)  }^{\prime}\right)  ;\cdots;\Delta
G_{\cdot}\left(  u_{\sigma\left(  k\right)  }^{\prime}\right)  ;\Delta
G_{\cdot}\left(  u_{\sigma\left(  k\right)  }^{\prime}\right)  \right]  ,
\label{crazycat}%
\end{align}
modulo the fact that one may remove the integrals with respect to those
$u_{j}^{\prime}$'s that are not represented among $\{u_{\sigma\left(
1\right)  }^{\prime},\cdots,u_{\sigma\left(  k\right)  }^{\prime}\}$. The
theorem will now be proved if we can show that for all $k\in\left\{
0,1,2,\cdots,\left[  m/2\right]  \right\}  $ and all $\sigma\in\Sigma_{m}^{k}%
$, the above expression $K=K_{m,k,\sigma}$tends to $0$ as $\varepsilon$ tends
to $0$.

A final note about notation. The bracket notation in the last two lines of the
expression (\ref{crazycat}) above means that we have the product of two
separate Riemann integrals over $s\in\lbrack0,T]$. Below we will denote these
integrals as being with respect to $s\in\lbrack0,T]$ and $t\in\lbrack
0,T]$.\bigskip

\noindent\emph{Step 1: diagonal.} As in Steps 1 of the proofs of Theorems
\ref{HomogGauss} and \ref{nonhomoggauss}, we can use brutal applications of
Cauchy-Schwarz's inequality to deal with the portion of $K_{m,k,\sigma}$ in
(\ref{crazycat}) where $\left\vert s-t\right\vert \leq2\varepsilon$. The
details are omitted.\bigskip

\noindent\emph{Step 2: term for }$k=0$. When $k=0$, there is only one
permutation $\sigma=Id$, and we have, using hypothesis (A)%
\begin{align*}
K_{m,0,Id}  &  =\frac{1}{\varepsilon^{2}}\int_{u_{m}=0}^{T}\int_{u_{m-1}%
=0}^{u_{m}}\cdots\int_{u_{1}=0}^{u_{2}}\mathbf{E}\left[  d\left[  M\right]
^{\otimes m}\left(  u_{1},\cdots,u_{m}\right)  \right]  \cdot\left[  \Delta
G_{\cdot}\left(  u_{1}\right)  ;\cdots;\Delta G_{\cdot}\left(  u_{m}\right)
\right]  ^{2}\\
&  \leq\frac{1}{\varepsilon^{2}}\int_{u_{m-k}=0}^{T}\int_{u_{m-k-1}%
=0}^{u_{m-k}}\cdots\int_{u_{1}=0}^{u_{2}}\Gamma^{2}\left(  u_{1}\right)
\Gamma^{2}\left(  u_{2}\right)  \cdots\Gamma^{2}\left(  u_{m}\right)  \left[
\Delta G_{\cdot}\left(  u_{1}\right)  ;\cdots;\Delta G_{\cdot}\left(
u_{m}\right)  \right]  ^{2}du_{1}du_{2}\cdots du_{m}\\
&  =\frac{1}{\varepsilon^{2}}\int_{u_{m-k}=0}^{T}\int_{u_{m-k-1}=0}^{u_{m-k}%
}\cdots\int_{u_{1}=0}^{u_{2}}\left[  \Delta\tilde{G}_{\cdot}\left(
u_{1}\right)  ;\cdots;\Delta\tilde{G}_{\cdot}\left(  u_{m}\right)  \right]
^{2}du_{1}du_{2}\cdots du_{m}.
\end{align*}
This is precisely the expression one gets for the term corresponding to $k=0$
when $M=W$, i.e. when $X$ is the Gaussian process $Z$ with kernel $\tilde{G}$.
Hence our hypotheses from the previous two theorems guarantee that this
expression tends to $0$.\bigskip

\noindent\emph{Step 3: term for }$k=1$. Again, in this case, there is only one
possible permutation, $\sigma=Id$, and we thus have, using hypothesis (A),
\begin{align*}
&  K_{m,1,Id}=\frac{1}{\varepsilon^{2}}\int_{u_{m-1}=0}^{T}\int_{u_{m-2}%
=0}^{u_{m-1}}\cdots\int_{u_{1}=0}^{u_{2}}\int_{u_{1}^{\prime}=0}^{u_{2}%
}\mathbf{E}\left[  d\left[  M\right]  \left(  u_{1}\right)  d\left[  M\right]
\left(  u_{1}^{\prime}\right)  d\left[  M\right]  ^{\otimes\left(  m-2\right)
}\left(  u_{2},\cdots,u_{m-1}\right)  \right] \\
&  \cdot\left[  \Delta G_{\cdot}\left(  u_{2}\right)  ;\cdots;\Delta G_{\cdot
}\left(  u_{m-1}\right)  ;\Delta G_{\cdot}\left(  u_{1}\right)  ;\Delta
G_{\cdot}\left(  u_{1}\right)  \right]  \cdot\left[  \Delta G_{\cdot}\left(
u_{2}\right)  ;\cdots;\Delta G_{\cdot}\left(  u_{m-1}\right)  ;\Delta
G_{\cdot}\left(  u_{1}^{\prime}\right)  ;\Delta G_{\cdot}\left(  u_{1}%
^{\prime}\right)  \right] \\
&  \leq\frac{1}{\varepsilon^{2}}\int_{u_{m-1}=0}^{T}\int_{u_{m-2}=0}^{u_{m-1}%
}\cdots\int_{u_{1}=0}^{u_{2}}\int_{u_{1}^{\prime}=0}^{u_{2}}du_{1}%
du_{1}^{\prime}du_{2}\cdots du_{m-1}\Gamma^{2}\left(  u_{1}\right)  \Gamma
^{2}\left(  u_{1}^{\prime}\right)  \Gamma^{2}\left(  u_{2}\right)
\cdots\Gamma^{2}\left(  u_{m}\right) \\
&  \cdot\left[  \left\vert \Delta G\right\vert _{\cdot}\left(  u_{2}\right)
;\cdots;\left\vert \Delta G\right\vert _{\cdot}\left(  u_{m-1}\right)
;\left\vert \Delta G\right\vert _{\cdot}\left(  u_{1}\right)  ;\left\vert
\Delta G\right\vert _{\cdot}\left(  u_{1}\right)  \right]  \cdot\left[
\left\vert \Delta G\right\vert _{\cdot}\left(  u_{2}\right)  ;\cdots
;\left\vert \Delta G\right\vert _{\cdot}\left(  u_{m-1}\right)  ;\left\vert
\Delta G\right\vert _{\cdot}\left(  u_{1}^{\prime}\right)  ;\left\vert \Delta
G\right\vert _{\cdot}\left(  u_{1}^{\prime}\right)  \right] \\
&  =\frac{1}{\varepsilon^{2}}\int_{u_{m-1}=0}^{T}\int_{u_{m-2}=0}^{u_{m-1}%
}\cdots\int_{u_{1}=0}^{u_{2}}\int_{u_{1}^{\prime}=0}^{u_{2}}du_{1}%
du_{1}^{\prime}du_{2}\cdots du_{m-1}\left[  \left\vert \Delta\tilde
{G}\right\vert _{\cdot}\left(  u_{2}\right)  ;\cdots;\left\vert \Delta
\tilde{G}\right\vert _{\cdot}\left(  u_{m-1}\right)  ;\left\vert \Delta
\tilde{G}\right\vert _{\cdot}\left(  u_{1}\right)  ;\left\vert \Delta\tilde
{G}\right\vert _{\cdot}\left(  u_{1}\right)  \right] \\
&  \cdot\left[  \left\vert \Delta\tilde{G}\right\vert _{\cdot}\left(
u_{2}\right)  ;\cdots;\left\vert \Delta\tilde{G}\right\vert _{\cdot}\left(
u_{m-1}\right)  ;\left\vert \Delta\tilde{G}\right\vert _{\cdot}\left(
u_{1}^{\prime}\right)  ;\left\vert \Delta\tilde{G}\right\vert _{\cdot}\left(
u_{1}^{\prime}\right)  \right]
\end{align*}
Note now that the product of two bracket operators $\left[  \cdots\right]
\left[  \cdots\right]  $ means we integrate over $0\leq s\leq t-2\varepsilon$
and $2\varepsilon\leq t\leq T$, and get an additional factor of $2$, since the
diagonal term was dealt with in Step 1.

In order to exploit the additional hypothesis (\ref{additional}) in our
theorem, our first move is to use Fubini by bringing the integrals over
$u_{1}$ all the way inside. We get%
\begin{align*}
K_{m,1,Id}  &  \leq\frac{2}{\varepsilon^{2}}\int_{u_{m-1}=0}^{T}\int
_{u_{m-2}=0}^{u_{m-1}}\cdots\int_{u_{2}=0}^{u_{3}}du_{2}\cdots du_{m-1}\\
&  \int_{t=2\varepsilon}^{T}\int_{s=0}^{t-2\varepsilon}ds\ dt\left\vert
\Delta\tilde{G}_{s}\left(  u_{2}\right)  \right\vert \cdots\left\vert
\Delta\tilde{G}_{s}\left(  u_{m-1}\right)  \right\vert \left\vert \Delta
\tilde{G}_{t}\left(  u_{2}\right)  \right\vert \cdots\left\vert \Delta
\tilde{G}_{t}\left(  u_{m-1}\right)  \right\vert \\
&  \int_{u_{1}=0}^{u_{2}}\int_{u_{1}^{\prime}=0}^{u_{2}}du_{1}du_{1}^{\prime
}\left(  \Delta\tilde{G}_{s}\left(  u_{1}\right)  \right)  ^{2}\left(
\Delta\tilde{G}_{t}\left(  u_{1}^{\prime}\right)  \right)  ^{2}.
\end{align*}
The term in the last line above is trivially bounded above by%
\[
\int_{u_{1}=0}^{T}\int_{u_{1}^{\prime}=0}^{T}du_{1}du_{1}^{\prime}\left(
\Delta\tilde{G}_{s}\left(  u_{1}\right)  \right)  ^{2}\left(  \Delta\tilde
{G}_{t}\left(  u_{1}^{\prime}\right)  \right)  ^{2}%
\]
precisely equal to $Var\left[  Z\left(  s+\varepsilon\right)  -Z\left(
s\right)  \right]  \ Var\left[  Z\left(  t+\varepsilon\right)  -Z\left(
t\right)  \right]  $, which by hypothesis is bounded above by $\delta
^{4}\left(  \varepsilon\right)  $. Consequently, we get%
\begin{align*}
K_{m,1,Id}  &  \leq2\frac{\delta^{4}\left(  \varepsilon\right)  }%
{\varepsilon^{2}}\int_{u_{m-1}=0}^{T}\int_{u_{m-2}=0}^{u_{m-1}}\cdots
\int_{u_{2}=0}^{u_{3}}du_{2}\cdots du_{m-1}\\
&  \int_{t=2\varepsilon}^{T}\int_{s=0}^{t-2\varepsilon}ds\ dt\left\vert
\Delta\tilde{G}_{s}\left(  u_{2}\right)  \right\vert \cdots\left\vert
\Delta\tilde{G}_{s}\left(  u_{m-1}\right)  \right\vert \left\vert \Delta
\tilde{G}_{t}\left(  u_{2}\right)  \right\vert \cdots\left\vert \Delta
\tilde{G}_{t}\left(  u_{m-1}\right)  \right\vert .
\end{align*}
We get an upper bound by integrating all the $u_{j}$'s over their entire range
$[0,T]$. I.e. we have,%
\begin{align*}
&  K_{m,1,Id}\leq\frac{\delta^{4}\left(  \varepsilon\right)  }{\varepsilon
^{2}}\int_{t=2\varepsilon}^{T}dt\int_{s=0}^{t-2\varepsilon}ds\\
&  \int_{0}^{T}\int_{0}^{T}\cdots\int_{0}^{T}du_{3}\cdots du_{m-1}\left\vert
\Delta\tilde{G}_{s}\left(  u_{3}\right)  \right\vert \cdots\left\vert
\Delta\tilde{G}_{s}\left(  u_{m-1}\right)  \right\vert \left\vert \Delta
\tilde{G}_{t}\left(  u_{3}\right)  \right\vert \cdots\left\vert \Delta
\tilde{G}_{t}\left(  u_{m-1}\right)  \right\vert \\
&  \cdot\int_{u_{2}=0}^{T}\left\vert \Delta\tilde{G}_{t}\left(  u_{2}\right)
\right\vert \left\vert \Delta\tilde{G}_{s}\left(  u_{2}\right)  \right\vert
du_{2}\\
&  =2\frac{\delta^{4}\left(  \varepsilon\right)  }{\varepsilon^{2}}%
\int_{t=2\varepsilon}^{T}dt\int_{s=0}^{t-2\varepsilon}ds\left(  \int_{0}%
^{T}du\left\vert \Delta\tilde{G}_{s}\left(  u\right)  \right\vert \left\vert
\Delta\tilde{G}_{t}\left(  u\right)  \right\vert \right)  ^{m-3}\cdot
\int_{u_{2}=0}^{u_{3}}\left\vert \Delta\tilde{G}_{t}\left(  u_{2}\right)
\right\vert \left\vert \Delta\tilde{G}_{s}\left(  u_{2}\right)  \right\vert
du_{2}..
\end{align*}
Now we use a simple Cauchy-Schwarz inequality for the integral over $u$, but
not for $u_{2}$. Recognizing that $\int_{0}^{T}\left\vert \Delta\tilde{G}%
_{s}\left(  u\right)  \right\vert ^{2}du$ is the variance $Var\left[  Z\left(
s+\varepsilon\right)  -Z\left(  s\right)  \right]  \leq\delta^{2}\left(
\varepsilon\right)  $, we have%
\begin{align*}
K_{m,1,Id}  &  \leq2\frac{\delta^{4}\left(  \varepsilon\right)  }%
{\varepsilon^{2}}\int_{t=2\varepsilon}^{T}dt\int_{s=0}^{t-2\varepsilon
}ds\left(  \int_{0}^{T}du\left\vert \Delta\tilde{G}_{s}\left(  u\right)
\right\vert ^{2}\right)  ^{m-3}\cdot\int_{u_{2}=0}^{u_{3}}\left\vert
\Delta\tilde{G}_{t}\left(  u_{2}\right)  \right\vert \left\vert \Delta
\tilde{G}_{s}\left(  u_{2}\right)  \right\vert du_{2}.\\
&  \leq2\frac{\delta^{4+2m-6}\left(  \varepsilon\right)  }{\varepsilon^{2}%
}\int_{t=2\varepsilon}^{T}dt\int_{s=0}^{t-2\varepsilon}ds\int_{u_{2}=0}%
^{T}\left\vert \Delta\tilde{G}_{t}\left(  u_{2}\right)  \right\vert \left\vert
\Delta\tilde{G}_{s}\left(  u_{2}\right)  \right\vert du_{2}.
\end{align*}
Condition (\ref{additional}) implies immediately $K_{m,1,Id}\leq\delta
^{2m}\left(  2\varepsilon\right)  \varepsilon^{-1}$ which tends to $0$ with
$\varepsilon$ by hypothesis.\bigskip

\noindent\emph{Step 4: }$k\geq2$. This step proceeds using the same technique
as Step 3. Fix $k\geq2$. Now for each given permutation $\sigma$, there are
$k$ pairs of parameters of the type $\left(  u,u^{\prime}\right)  $. Each of
these contributes precisely a term $\delta^{4}\left(  \varepsilon\right)  $,
as in the previous step, i.e. $\delta^{4k}\left(  \varepsilon\right)  $
altogether. In other words, for every $\sigma\in\Sigma_{m}^{k}$, and deleting
the diagonal term, we have
\begin{align*}
&  K_{m,k,\sigma}\\
&  \leq2\frac{\delta^{4k}\left(  \varepsilon\right)  }{\varepsilon^{2}}%
\int_{t=2\varepsilon}^{T}dt\int_{s=0}^{t-2\varepsilon}ds\int_{0}^{T}\int
_{0}^{u_{m-k}}\cdots\int_{0}^{u_{k+2}}du_{k+1}\cdots du_{m-k}\left[  \int
_{0}^{T}ds\left\vert \Delta\tilde{G}_{s}\left(  u_{k+1}\right)  \right\vert
\cdots\left\vert \Delta\tilde{G}_{s}\left(  u_{m-k}\right)  \right\vert
\right]  ^{2}.
\end{align*}
Since $k\leq\left(  m-1\right)  /2$, there is at least one integral, the one
in $u_{k+1}$, above. We treat all the remaining integrals, if any, over
$u_{k+2},\cdots,u_{m-k}$ with Cauchy-Schwarz's inequality as in Step 3,
yielding a contribution $\delta^{2\left(  m-2k-1\right)  }\left(
\varepsilon\right)  $. The remaining integral over $u_{k+1}$ yields, by
Condition (\ref{additional}), a contribution of $\delta^{2}\left(
2\varepsilon\right)  \varepsilon$. Hence the contribution of $K_{m,k,\sigma}$
is again $\delta^{2m}\left(  2\varepsilon\right)  \varepsilon^{-1}$, which
tends to $0$ with $\varepsilon$ by hypothesis, concluding the proof of the Theorem.
\end{proof}

\bigskip

We state and prove the next proposition, in order to illustrate the range of
applicability of Theorem \ref{MartThm}. It provides a class of
martingale-based processes $X$ which can be associated to non-homogeneous
Gaussian processes $Z$ satisfying the assumptions of Theorem
\ref{nonhomoggauss} and the additional assumption (\ref{additional}).

\begin{proposition}
\label{Ex}Let $X$ be defined by (\ref{defX}) via the kernel $G$ and the
martingale $M$. Assume $m$ is an odd integer $\geq3$ and condition \emph{(A)}
holds. Assume that $\tilde{G}\left(  t,s\right)  :=$ $\Gamma\left(  s\right)
G\left(  t,s\right)  $ can be bounded above as follows: for all $s,t$,%
\[
\tilde{G}\left(  t,s\right)  =\mathbf{1}_{s\leq t}\ g\left(  t,s\right)
=\mathbf{1}_{s\leq t}\left\vert t-s\right\vert ^{1/\left(  2m\right)
-1/2}f\left(  t,s\right)
\]
in which the bivariate function $f\left(  t,s\right)  $ is positive and
bounded as%
\[
\left\vert f\left(  t,s\right)  \right\vert \leq f\left(  \left\vert
t-s\right\vert \right)
\]
where the univariate function $f\left(  r\right)  $ is increasing, and concave
on $\mathbf{R}_{+}$, with $\lim_{r\rightarrow0}f\left(  r\right)  =0$, and
where $g$ has a second mixed derivative such that%
\begin{align*}
\left\vert \frac{\partial g}{\partial t}\left(  t,s\right)  \right\vert
+\left\vert \frac{\partial g}{\partial s}\left(  t,s\right)  \right\vert  &
\leq c\left\vert t-s\right\vert ^{1/\left(  2m\right)  -3/2};\\
\left\vert \frac{\partial^{2}g}{\partial s\partial t}\left(  t,s\right)
\right\vert  &  \leq c\left\vert t-s\right\vert ^{1/\left(  2m\right)  -5/2}.
\end{align*}
Also assume that $g$ is decreasing in $t$ and the bivariate $f$ is increasing
in $t$. Then $X$ has zero $m$-variation.
\end{proposition}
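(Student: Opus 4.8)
The plan is to derive the proposition from Theorem \ref{MartThm}, applied to $X$ with the auxiliary Gaussian process $Z=\tilde{X}$ of (\ref{Zee}), whose kernel is the $\tilde{G}(t,s)=\mathbf{1}_{s\le t}\,g(t,s)$ described in the hypotheses. Theorem \ref{MartThm} asks for three things: that $Z$ satisfy Condition (M), that $Z$ satisfy the hypotheses of Theorem \ref{HomogGauss} or Theorem \ref{nonhomoggauss} relative to some univariate $\delta$, and that the extra integral bound (\ref{additional}) hold. The first is immediate, since $Z$ is a Wiener integral, so the measure $\mu$ of (\ref{mu}) attached to $Z$ is Lebesgue measure, for which Condition (M) holds with $\Gamma\equiv1$. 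Because the indicator $\mathbf{1}_{s\le t}$ and the genuine two-variable dependence of $f$ both break homogeneity, the relevant Gaussian result for $Z$ is Theorem \ref{nonhomoggauss}, applied with the univariate function $\delta^{2}(r):=\sup_s\delta_Z^{2}(s,s+r)$, where $\delta_Z$ is the canonical metric of $Z$.

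The heart of the proof is a family of kernel estimates of a single flavour. For the bound $\delta(r)=o(r^{1/(2m)})$ of (\ref{nhdeltacond}) I would write $\delta_Z^{2}(s,s+r)=\int_0^T|\tilde{G}(s+r,u)-\tilde{G}(s,u)|^{2}\,du$ and, for small $r$, cut the $u$-integral into the ``bulge'' $[s,s+r]$, the ``near'' part $[(s-r)^{+},s]$, and the ``far'' part $[0,(s-r)^{+}]$. On the bulge and the near part the integrand is controlled directly by a constant times $(s+r-u)^{1/m-1}f^{2}(s+r-u)$ (and its analogue at $s$); integrating gives a term of the order of $\int_0^{cr}v^{1/m-1}f^{2}(v)\,dv\le c\,r^{1/m}f^{2}(cr)$, which is $o(r^{1/m})$ because $f$ is increasing, concave and $f(0^{+})=0$. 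On the far part neither bound works alone --- the pointwise bound keeps the decaying factor $f$ but is not integrable near $u=s$, while the mean-value bound $|\tilde{G}(s+r,u)-\tilde{G}(s,u)|^{2}=r^{2}|\partial_t g(\xi,u)|^{2}\le c\,r^{2}(s-u)^{1/m-3}$ (derivative in the first variable) is integrable but has lost $f$ --- so the trick is to interpolate: use the pointwise bound for $s-u$ below the crossover value $v_{*}$ where the two estimates agree, and the mean-value bound above it. Both resulting pieces are of order $r^{1/m}f(v_{*})^{2-1/m}$, and since $v_{*}\to0$ as $r\to0$ one has $f(v_{*})\to0$, so the far part is $o(r^{1/m})$ as well. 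This establishes (\ref{nhdeltacond}).

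The two remaining conditions --- the bound (\ref{nhmubirdiecond}) on $\mu_Z=\partial^{2}\delta_Z^{2}/\partial s\partial t$ and the extra bound (\ref{additional}) --- are both estimates of off-diagonal planar increments of $\tilde{G}$, and I would treat them in tandem, since $\Theta_Z^{\varepsilon}(s,t)=\int_0^T\Delta\tilde{G}_t(u)\,\Delta\tilde{G}_s(u)\,du$ gives $|\Theta_Z^{\varepsilon}(s,t)|\le\int_0^T|\Delta\tilde{G}_t(u)|\,|\Delta\tilde{G}_s(u)|\,du$, so (\ref{additional}) controls precisely the quantity $\varepsilon^{-2}\int_{2\varepsilon}^T\!\int_0^{t-2\varepsilon}|\Theta_Z^{\varepsilon}(s,t)|\,ds\,dt$ that drives the off-diagonal part of the proof of Theorem \ref{nonhomoggauss}. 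On the off-diagonal set $t\ge s+2\varepsilon$ the kernel is smooth in its first variable, so $|\Delta\tilde{G}_t(u)|=\varepsilon|\partial_t g(\eta,u)|\le c\,\varepsilon\,(t-u)^{1/(2m)-3/2}$ uniformly in $u\le s+\varepsilon$, while $\int_0^T|\Delta\tilde{G}_s(u)|\,du$ is estimated by the same bulge/near/far split as above. Carrying out the remaining $ds\,dt$ integration by Fubini --- and using that $g$ is decreasing in $t$ and the bivariate $f$ increasing in $t$ to keep the inevitable case analysis finite --- reproduces, with the decaying factor $f$ built in, the threshold Riemann--Liouville computation performed right after Theorem \ref{nonhomoggauss}, and yields both (\ref{additional}) and $|\mu_Z|(OD)\le c\,\varepsilon^{-1+1/m}$. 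With all three hypotheses verified, Theorem \ref{MartThm} gives that $X$ has zero $m$th variation, which for $m=3$ is the cubic variation.

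The main obstacle is exactly the one flagged in the remarks preceding the proposition: the kernel $\tilde{G}$ has a singularity on the diagonal $\{s=t\}$ that no single elementary bound handles --- one must interpolate between the pointwise bound, which alone sees the gain $f(0^{+})=0$, and the mean-value bound, which alone supplies the needed cancellation --- and the adaptedness indicator $\mathbf{1}_{s\le t}$ makes $\tilde{G}$ non-monotone, so the integration domains in the estimates for $\delta^{2}$, for $\mu_Z$ and for (\ref{additional}) must be split into several regions according to the relative sizes of $\varepsilon$, $t-s$, $s-u$ and $t-u$. The concavity of $f$, the limit $f(0^{+})=0$, and the monotonicity of $g$ and of the bivariate $f$ in the variable $t$ are precisely the hypotheses that make each of these regional estimates close.
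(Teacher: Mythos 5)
Your overall route is the paper's: reduce to Theorem \ref{MartThm} applied with $Z=\tilde{X}$, and verify the hypotheses of Theorem \ref{nonhomoggauss} plus the extra integral condition by kernel estimates built on a bulge/near/far decomposition of the $u$-integral. Within that frame you diverge in two ways, both defensible. For the variance bound (\ref{nhdeltacond}), the paper handles the far region by the mean-value theorem combined with the concavity of the univariate $f$ at the point $2\varepsilon$, namely $f\left(\varepsilon+r\right)\le f\left(2\varepsilon\right)\left(\varepsilon+r\right)/\left(2\varepsilon\right)$, trading a power of $r/\varepsilon$ for the small factor $f\left(2\varepsilon\right)$; your crossover interpolation between the pointwise bound (which retains $f$) and the mean-value bound (which retains the cancellation), matched at the scale $v_{*}$ with $v_{*}f\left(v_{*}\right)=r$, achieves the same $o\left(r^{1/m}\right)$ using only monotonicity and $f\left(0^{+}\right)=0$, and is if anything cleaner. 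For the off-diagonal conditions, the paper verifies (\ref{nhmubirdiecond}) literally, by computing $\partial^{2}\tilde{\delta}^{2}/\partial s\partial t$ and estimating each resulting term with the first- and second-derivative hypotheses on $g$; you instead bound $\iint\left\vert\Theta^{\varepsilon}\right\vert$ directly by the triple integral of (\ref{additional}). That is legitimate, because the proof of Theorem \ref{nonhomoggauss} uses $\left\vert\mu\right\vert\left(OD\right)$ only through $\iint_{OD}\left\vert\Theta^{\varepsilon}\right\vert\le\varepsilon^{2}\left\vert\mu\right\vert\left(OD\right)$, but you should say explicitly that you are re-running that proof with a direct bound on $\iint\left\vert\Theta^{\varepsilon}\right\vert$ rather than verifying the stated hypothesis: your tandem argument never uses the bound on $\partial^{2}g/\partial s\partial t$ and does not actually produce a total-variation estimate for the mixed-derivative measure, so the claim that it "yields $\left\vert\mu_{Z}\right\vert\left(OD\right)\le c\varepsilon^{-1+1/m}$" should be dropped.

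The one place your plan stalls as written is the assertion that you obtain (\ref{additional}) itself. Its right-hand side is $c\varepsilon\delta^{2}\left(2\varepsilon\right)$, which by your own Step-1 estimate is $o\left(\varepsilon^{1+1/m}\right)$ at a rate tied to $f$ and to the choice of dominating univariate $\delta^{2}$, whereas the triple integral is only $O\left(\varepsilon^{1+1/m}\right)$ from the available estimates: the factor $\left\vert\Delta\tilde{G}_{t}\left(u\right)\right\vert$ off the diagonal is controlled via $\partial g/\partial t$, whose hypothesis carries no factor of $f$, so there is no mechanism to beat $\varepsilon^{1+1/m}$ down to $\varepsilon\delta^{2}\left(2\varepsilon\right)$ for an arbitrary admissible $\delta^{2}$. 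The paper's resolution is exactly to prove only the weaker bound $c\varepsilon^{2+2\alpha}=c\varepsilon^{1+1/m}$ (its condition (\ref{additional2})) and then to re-open Steps 3 and 4 of the proof of Theorem \ref{MartThm} to check that this weaker input still yields a contribution of order $\delta^{2m-2}\left(\varepsilon\right)\varepsilon^{-2}\varepsilon^{1+1/m}=o\left(1\right)$ thanks to $\delta\left(\varepsilon\right)=o\left(\varepsilon^{1/\left(2m\right)}\right)$. You need the same repair; with it, your argument closes.
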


The presence of the indicator function $\mathbf{1}_{s\leq t}$ in the
expression for $\tilde{G}$ above is typical of most models, since it coincides
with asking that $Z$ be adapted to the filtrations of $W$, which is equivalent
to $X$ being adapted to the filtration of $M$. In the case of irregular
processes, which is the focus of this paper, the presence of the indicator
function makes $\tilde{G}$ non-monotone in both $s$ and $t$, which creates
technical difficulties. Examples of non-adapted irregular processes are easier
to treat, since it is possible to require that $\tilde{G}$ be monotone. We do
not consider such non-adapted processes further. Specific examples of adapted
processes which fall in the class defined in the above proposition are given
below, after the proposition's proof.

\begin{proof}
[Proof of Proposition \ref{Ex}]Below the value $1/\left(  2m\right)  -1/2$ is
denoted by $\alpha$. We now show that we can apply Theorem \ref{nonhomoggauss}
directly to the Gaussian process $Z$ given in (\ref{Zee}), which, by Theorem
\ref{MartThm}, is sufficient, together with Condition (\ref{additional}), to
obtain our desired conclusion. Note the assumption about $\tilde{G}$ implies
that $s\mapsto\tilde{G}\left(  t,s\right)  $ is square-integrable, and
therefore $Z$ is well-defined. We will prove Condition (\ref{nhdeltacond})
holds in Step 1; Step 2 will show Condition (\ref{nhmubirdiecond}) holds;
Condition (\ref{additional}) will be established in Step 3.\vspace{0.1in}

\noindent\emph{Step 1. Variance calculation. }We need only to show
$\tilde{\delta}^{2}\left(  s,s+\varepsilon\right)  =o\left(  \varepsilon
^{1/m}\right)  $ uniformly in $s$. We have, for given $s$ and $t=s+\varepsilon
$%
\begin{align}
\tilde{\delta}^{2}\left(  s,s+\varepsilon\right)   &  =\int_{0}^{t}\left\vert
\tilde{G}\left(  t,r\right)  -\tilde{G}\left(  s,r\right)  \right\vert
^{2}dr\nonumber\\
&  =\int_{0}^{s}\left\vert \left(  s+\varepsilon-r\right)  ^{\alpha}f\left(
s+\varepsilon,r\right)  -\left(  s-r\right)  ^{\alpha}f\left(  s,r\right)
\right\vert ^{2}dr\nonumber\\
&  +\int_{s}^{s+\varepsilon}\left\vert s+\varepsilon-r\right\vert ^{2\alpha
}f^{2}\left(  s+\varepsilon,r\right)  dr\label{delta2withf}\\
&  =:A+B.\nonumber
\end{align}
Since $f^{2}\left(  s+\varepsilon,r\right)  \leq f\left(  s+\varepsilon
-r\right)  $ and the univariate $f$ increases, in $B$ we can bound this last
quantity by $f\left(  \varepsilon\right)  $, and we get
\[
B\leq f^{2}\left(  \varepsilon\right)  \int_{0}^{\varepsilon}r^{2\alpha
}dr=3f^{2}\left(  \varepsilon\right)  \varepsilon^{2\alpha+1}=o\left(
\varepsilon^{1/m}\right)  .
\]

The term $A$ is slightly more delicate to estimate. By the fact that $f$ is
increasing and $g$ is decreasing in $t$,%
\begin{align*}
A  &  \leq\int_{0}^{s}f^{2}\left(  s+\varepsilon,r\right)  \left\vert \left(
s+\varepsilon-r\right)  ^{\alpha}-\left(  s-r\right)  ^{\alpha}\right\vert
^{2}dr=\int_{0}^{s}f^{2}\left(  \varepsilon+r\right)  \left\vert r^{\alpha
}-\left(  r+\varepsilon\right)  ^{\alpha}\right\vert ^{2}dr\\
&  =\int_{0}^{\varepsilon}f^{2}\left(  \varepsilon+r\right)  \left\vert
r^{\alpha}-\left(  r+\varepsilon\right)  ^{\alpha}\right\vert ^{2}%
dr+\int_{\varepsilon}^{s}f^{2}\left(  \varepsilon+r\right)  \left\vert
r^{\alpha}-\left(  r+\varepsilon\right)  ^{\alpha}\right\vert ^{2}dr\\
&  =:A_{1}+A_{2}.
\end{align*}
We have, again from the univariate $f$'s increasingness, and the limit
$\lim_{r\rightarrow0}f\left(  r\right)  =0$,%
\[
A_{1}\leq f^{2}\left(  2\varepsilon\right)  \int_{0}^{\varepsilon}\left\vert
r^{\alpha}-\left(  r+\varepsilon\right)  ^{\alpha}\right\vert ^{2}dr=cst\cdot
f^{2}\left(  2\varepsilon\right)  \varepsilon^{2\alpha+1}=o\left(
\varepsilon^{1/m}\right)  .
\]
For the other part of $A$, we need to use $f$'s concavity at the point
$2\varepsilon$ in the interval $[0,\varepsilon+r]$ (since $\varepsilon
+r>2\varepsilon$ in this case), which implies $f\left(  \varepsilon+r\right)
<f\left(  2\varepsilon\right)  \left(  \varepsilon+r\right)  /\left(
2\varepsilon\right)  $. Also using the mean-value theorem for the difference
of negative cubes, we get%
\begin{align*}
A_{2}  &  \leq cst\cdot\varepsilon^{2}\int_{\varepsilon}^{s}f^{2}\left(
\varepsilon+r\right)  r^{2\alpha-2}dr\leq cst\cdot\varepsilon f\left(
2\varepsilon\right)  \int_{\varepsilon}^{s}\left(  \varepsilon+r\right)
r^{2\alpha-2}dr\\
&  \leq cst\cdot\varepsilon f\left(  2\varepsilon\right)  \int_{\varepsilon
}^{s}r^{2\alpha-1}=cst\cdot\varepsilon^{2\alpha+1}f\left(  2\varepsilon
\right)  =o\left(  \varepsilon^{1/3}\right)  .
\end{align*}
This finishes the proof of Condition (\ref{nhdeltacond}).\vspace{0.1in}

\noindent\emph{Step 2. Covariance calculation}. We first calculate the second
mixed derivative $\partial^{2}\tilde{\delta}^{2}/\partial s\partial t$, where
$\tilde{\delta}$ is the canonical metric of $Z$, because we must show
$\left\vert \mu\right\vert \left(  OD\right)  \leq\varepsilon^{2\alpha}$,
which is condition (\ref{nhmubirdiecond}), and $\mu\left(  dsdt\right)
=ds\ dt\ \partial^{2}\tilde{\delta}^{2}/\partial s\partial t$. We have, for
$0\leq s\leq t-\varepsilon$,%
\begin{align*}
\tilde{\delta}^{2}\left(  s,t\right)   &  =\int_{0}^{s}\left(  g\left(
t,s-r\right)  -g\left(  s,s-r\right)  \right)  ^{2}dr+\int_{s}^{t}g^{2}\left(
t,r\right)  dr\\
&  =:A+B.
\end{align*}
We calculate%
\begin{align*}
\frac{\partial^{2}A}{\partial s\partial t}\left(  t,s\right)   &
=2\frac{\partial g}{\partial t}\left(  t,0\right)  \left(  g\left(
t,0\right)  -g\left(  s,0\right)  \right) \\
&  +\int_{0}^{s}2\frac{\partial g}{\partial t}\left(  t,s-r\right)  \left(
\frac{\partial g}{\partial s}\left(  t,s-r\right)  -\frac{\partial g}{\partial
t}\left(  s,s-r\right)  -\frac{\partial g}{\partial s}\left(  s,s-r\right)
\right) \\
&  +\int_{0}^{s}2\left(  g\left(  t,s-r\right)  -g\left(  s,s-r\right)
\right)  \frac{\partial^{2}g}{\partial s\partial t}\left(  t,s-r\right)  dr.\\
&  =A_{1}+A_{2}+A_{3},
\end{align*}
and
\[
\frac{\partial^{2}B}{\partial s\partial t}\left(  t,s\right)  =-2g\left(
t,s\right)  \frac{\partial g}{\partial t}\left(  t,s\right)  .
\]

Next, we immediately get, for the portion of $\left\vert \mu\right\vert
\left(  OD\right)  $ corresponding to $B$, using the hypotheses of our
proposition,%
\begin{align*}
\int_{\varepsilon}^{T}dt\int_{0}^{t-\varepsilon}ds\left\vert \frac
{\partial^{2}B}{\partial s\partial t}\left(  t,s\right)  \right\vert  &
\leq2c\int_{\varepsilon}^{T}dt\int_{0}^{t-\varepsilon}dsf\left(  \left\vert
t-s\right\vert \right)  \left\vert t-s\right\vert ^{\alpha}\left\vert
t-s\right\vert ^{\alpha-1}\\
&  \leq2c\left\Vert f\right\Vert _{\infty}\int_{\varepsilon}^{T}%
dt\ \varepsilon^{2\alpha}=cst\cdot\varepsilon^{2\alpha},
\end{align*}
which is of the correct order for Condition (\ref{nhmubirdiecond}). For the
term corresponding to $A_{1}$, using our hypotheses, we have%
\[
\int_{\varepsilon}^{T}dt\int_{0}^{t-\varepsilon}ds\left\vert A_{1}\right\vert
\leq2\int_{\varepsilon}^{T}dt\int_{0}^{t-\varepsilon}ds\ t^{\alpha}\left\vert
\frac{\partial g}{\partial t}\left(  \xi_{t,s},0\right)  \right\vert
\left\vert t-s\right\vert
\]
where $\xi_{t,s}$ is in the interval $\left(  s,t\right)  $. Our hypothesis
thus implies $\left\vert \frac{\partial g}{\partial t}\left(  \xi
_{t,s},0\right)  \right\vert \leq s^{\alpha}$, and hence%
\[
\int_{\varepsilon}^{T}dt\int_{0}^{t-\varepsilon}ds\left\vert A_{1}\right\vert
\leq2T\int_{\varepsilon}^{T}dt\int_{0}^{t-\varepsilon}ds\ s^{\alpha
-1}t^{\alpha-1}=2T\alpha^{-1}\int_{\varepsilon}^{T}dt\ t^{\alpha-1}\left(
t-\varepsilon\right)  ^{\alpha}\leq\alpha^{-2}T^{1+2\alpha}.
\]
This is much smaller than the right-hand side $\varepsilon^{2\alpha}$ of
Condition (\ref{nhmubirdiecond}), since $2\alpha=1/m-1<0$. The terms $A_{2}$
and $A_{3}$ are treated similarly, thanks to our hypotheses.\vspace{0.1in}

\noindent\emph{Step 3: proving Condition (\ref{additional}).} In fact, we
modify the proof of Theorem \ref{MartThm}, in particular Steps 3 and 4, so
that we only need to prove%
\begin{equation}
\int_{t=2\varepsilon}^{T}dt\int_{s=0}^{t-2\varepsilon}ds\int_{u=0}%
^{T}\left\vert \Delta\tilde{G}_{t}\left(  u\right)  \right\vert \left\vert
\Delta\tilde{G}_{s}\left(  u\right)  \right\vert du\leq c\varepsilon
^{2+2\alpha}=c\varepsilon^{1/m+1}, \label{additional2}%
\end{equation}
instead of Condition (\ref{additional}). Indeed, for instance in Step 3, this
new condition yields a final contribution of order $\delta^{2m-2}\left(
\varepsilon\right)  \varepsilon^{-2}\varepsilon^{1/m+1}$. With the assumption
on $\delta$ that we have, $\delta\left(  \varepsilon\right)  =o\left(
\varepsilon^{1/\left(  2m\right)  }\right)  $, and hence the final
contribution is of order $o\left(  \varepsilon^{\left(  2m-2\right)  /\left(
2m\right)  -1+1/m}\right)  =o\left(  1\right)  $. This proves that the
conclusion of Theorem \ref{MartThm} holds if we assume (\ref{additional2})
instead of Condition (\ref{additional}).

We now prove (\ref{additional2}). We can write%
\begin{align*}
&  \int_{t=2\varepsilon}^{T}dt\int_{s=0}^{t-2\varepsilon}ds\int_{u=0}%
^{T}\left\vert \Delta\tilde{G}_{t}\left(  u\right)  \right\vert \left\vert
\Delta\tilde{G}_{s}\left(  u\right)  \right\vert du\\
&  =\int_{t=2\varepsilon}^{T}dt\int_{s=0}^{t-2\varepsilon}ds\int_{0}%
^{s}\left\vert g\left(  t+\varepsilon,u\right)  -g\left(  t,u\right)
\right\vert \left\vert g\left(  s+\varepsilon,u\right)  -g\left(  s,u\right)
\right\vert du\\
&  +\int_{t=2\varepsilon}^{T}dt\int_{s=0}^{t-2\varepsilon}ds\int
_{s}^{s+\varepsilon}\left\vert g\left(  t+\varepsilon,u\right)  -g\left(
t,u\right)  \right\vert \left\vert g\left(  s+\varepsilon,u\right)
\right\vert du\\
&  =:A+B.
\end{align*}

For $A$, we use the hypotheses of this proposition: for the last factor in
$A$, we exploit the fact that $g$ is decreasing in $t$ while $f$ is increasing
in $t$; for the other factor in\ $A$, use the bound on $\partial g/\partial
t$; thus we have%
\[
A\leq\int_{t=2\varepsilon}^{T}dt\int_{s=0}^{t-2\varepsilon}\varepsilon
\left\vert t-s\right\vert ^{\alpha-1}ds\int_{0}^{s}f\left(  s+\varepsilon
,u\right)  \left(  \left(  s-u\right)  ^{\alpha}-\left(  s+\varepsilon
-u\right)  ^{\alpha}\right)  du.
\]
We separate the integral in $u$ into two pieces, for $u\in\lbrack
0,s-\varepsilon]$ and $u\in\lbrack s-\varepsilon,s]$. For the first integral
in $u$, since $f$ is bounded, we have%
\[
\int_{0}^{s-\varepsilon}f\left(  s+\varepsilon,u\right)  \left(  \left(
s-u\right)  ^{\alpha}-\left(  s+\varepsilon-u\right)  ^{\alpha}\right)
du\leq\left\Vert f\right\Vert _{\infty}\varepsilon\int_{0}^{s-\varepsilon
}\left(  s-u\right)  ^{\alpha-1}du\leq\left\Vert f\right\Vert _{\infty
}c_{\alpha}\varepsilon^{1+\alpha}.
\]
For the second integral in $u$, we use the fact that $s-u+\varepsilon
>\varepsilon$ and $s-u<\varepsilon$ implies $s-u+\varepsilon>2\left(
s-u\right)  $, so that the negative part of the integral can be ignored, and
thus%
\[
\int_{s-\varepsilon}^{s}f\left(  s+\varepsilon,u\right)  \left(  \left(
s-u\right)  ^{\alpha}-\left(  s+\varepsilon-u\right)  ^{\alpha}\right)
du\leq\left\Vert f\right\Vert _{\infty}\int_{s-\varepsilon}^{s}\left(
s-u\right)  ^{\alpha}du=\left\Vert f\right\Vert _{\infty}c_{\alpha}%
\varepsilon^{1+\alpha},
\]
which is the same upper bound as for the other part of the integral in $u$.
Thus%
\[
A\leq cst\cdot\varepsilon^{2+\alpha}\int_{t=2\varepsilon}^{T}dt\int
_{s=0}^{t-2\varepsilon}\left\vert t-s\right\vert ^{\alpha-1}ds\leq
cst\cdot\varepsilon^{2+\alpha}\int_{t=2\varepsilon}^{T}dt\ \varepsilon
^{\alpha}\leq cst\cdot\varepsilon^{2+2\alpha}=cst\cdot\varepsilon^{1/m+1},
\]
which is the conclusion we needed at least for $A$.

Lastly, we estimate $B$. We use the fact that $f$ is bounded, and thus
$\left\vert g\left(  s+\varepsilon,u\right)  \right\vert \leq\left\Vert
f\right\Vert _{\infty}\left\vert s+\varepsilon-u\right\vert ^{\alpha}$, as
well as the estimate on the derivative of $g$ as we did in the calculation of
$A$, yielding
\begin{align*}
B  &  \leq\left\Vert f\right\Vert _{\infty}\varepsilon\int_{t=2\varepsilon
}^{T}dt\int_{s=0}^{t-2\varepsilon}ds\left\vert t-s-\varepsilon\right\vert
^{\alpha-1}\int_{s}^{s+\varepsilon}\left\vert s+\varepsilon-u\right\vert
^{\alpha}du\\
&  =cst\cdot\varepsilon^{\alpha+2}\int_{t=2\varepsilon}^{T}dt\int
_{s=0}^{t-2\varepsilon}ds\left\vert t-s-\varepsilon\right\vert ^{\alpha-1}\\
&  \leq2^{1+\left\vert \alpha\right\vert }cst\cdot\varepsilon^{\alpha+2}%
\int_{t=2\varepsilon}^{T}dt\int_{s=0}^{t-2\varepsilon}ds\left\vert
t-s\right\vert ^{\alpha-1}\leq cst\cdot\varepsilon^{2\alpha+2}=cst\cdot
\varepsilon^{1/m+1}.
\end{align*}
This is the conclusion we needed for $B,$which finishes the proof of the proposition.
\end{proof}

\vspace{0.1in}

\label{pex}The above proposition covers a wide variety of martingale-based
models, which can be quite far from Gaussian models in the sense that they can
have only a few moments. We describe one easily constructed class. Assume that
$M$ is a martingale such that $\mathbf{E}\left[  \left\vert d\left[  M\right]
/dt\right\vert ^{2m}\right]  $ is bounded above by a constant $c^{2m}$
uniformly in $t\leq T$. This uniform boundedness assumption implies that we
can take $\Gamma\equiv c$ in Condition (A). In particular, $\tilde{G}$ can be
chosen to be proportional to $G$. Let $G\left(  t,s\right)  =G_{RLfBm}\left(
t,s\right)  :=\mathbf{1}_{s\leq t}\left\vert t-s\right\vert ^{1/\left(
2m\right)  -1/2+\alpha}$ for some $\alpha>0$; in other words, $G$ is the
Brownian representation kernel of the Riemann-Liouville fractional Brownian
motion with parameter $H=1/\left(  2m\right)  -\alpha>1/\left(  2m\right)  $.
It is immediate to check that the assumptions of Proposition \ref{Ex} are
satisfied for this class of martingale-based models, which implies that the
corresponding $X$ defined by (\ref{defX}) have zero $m$th variation.

More generally, assume that $G$ is bounded above by a multiple of $G_{RLfBm}$,
and assume the two partial derivatives of $G$, and the mixed second order
derivative of $G$, are bounded by the corresponding (multiples of) derivatives
of $G_{RLfBm}$; one can check that the standard fBm's kernel is in this class,
and that the martingale-based models of this class also satisfy the
assumptions of Proposition \ref{Ex}, resulting again zero $m$th variations for
the corresponding $X$ defined in (\ref{defX}). For the sake of conciseness, we
will omit the details, which are tedious and straightforward.\bigskip

The most quantitatively significant condition in Theorem \ref{MartThm}, that
the univariate function $\delta\left(  \varepsilon\right)  $ corresponding to
$\tilde{G}$ be equal to $o\left(  \varepsilon^{1/\left(  2m\right)  }\right)
$, can be interpreted as a regularity condition. In the Gaussian case, it
means that there is a function $f\left(  \varepsilon\right)  =o\left(
\varepsilon^{1/\left(  2m\right)  }\log^{1/2}\left(  1/\varepsilon\right)
\right)  $ such that $f$ is an almost-sure uniform modulus of continuity for
$X$. In non-Gaussian cases, similar interpretations can be given for the
regularity of $X$ itself, provided enough moments of $X$ exist. If $X$ has
fractional exponential moments, in the sense that for some constants
$c>0,0<\beta\leq2$, $\mathbf{E}\left[  \exp\left(  c\left\vert X\left(
t\right)  -X\left(  s\right)  \right\vert ^{\beta}\right)  \right]  $ is
finite for all $s,t$, then the function $f$ above will also serve as an
almost-sure uniform modulus of continuity for $X$, provided the logarithmic
correction term in $f$ is raised to the power $1/\beta$ rather than $1/2$.
Details of how this can be established are in the non-Gaussian regularity
theory in \cite{VV}. If $X$ has standard moments of all orders, then one can
replace $f\left(  \varepsilon\right)  $ by $\varepsilon^{1/\left(  2m\right)
-\alpha}$ for any $\alpha>0$. This is easily achieved using Kolmogorov's
continuity criterion. If $X$ only has finitely many moments, Kolmogorov's
continuity criterion can only guarantee that one may take $\alpha$ greater
than some $\alpha_{0}>0$. We do not delve into the details of these regularity
issues in the non-Gaussian martingale case.

\section{Stochastic calculus\label{STOCH}}

In this section, we investigate the possibility of defining the so-called
symmetric stochastic integral and its associated It\^{o} formula for processes
which are not fractional Brownian motion; fBm was treated in \cite{GNRV}. We
concentrate on Gaussian processes under hypotheses similar to those used in
Section \ref{NonHomogGaussSect} (Theorem \ref{nonhomoggauss}).

The basic strategy is to use the results of \cite{GNRV}. Let $X$ be a
stochastic process on $[0,1]$. According to Sections 3 and 4 in \cite{GNRV}
(specifically, according to the proof of part 1 of Theorem 4.4 therein), if
for every bounded measurable function $g$ on $\mathbf{R}$, the limit%
\begin{equation}
\lim_{\varepsilon\rightarrow0}\frac{1}{\varepsilon}\int_{0}^{1}du\left(
X_{u+\varepsilon}-X_{u}\right)  ^{m}g\left(  \frac{X_{u+\varepsilon}+X_{u}}%
{2}\right)  =0\label{ForIto}%
\end{equation}
holds in probability, for both $m=3$ and $m=5$, then for every $t\in
\lbrack0,1]$ and every $f\in C^{6}\left(  \mathbf{R}\right)  $, the
\emph{symmetric} (\textquotedblleft generalized Stratonovich\textquotedblright%
) stochastic integral%
\begin{equation}
\int_{0}^{t}f^{\prime}\left(  X_{u}\right)  d^{\circ}X_u 
=:\lim_{\varepsilon\rightarrow0}\frac{1}{\varepsilon}\int_{0}^{t}du\left(
X_{u+\varepsilon}-X_{u}\right)  \frac{1}{2}\left(  f^{\prime}\left(
X_{u+\varepsilon}\right)  +f^{\prime}\left(  X_{u}\right)  \right)
\label{Stratoint}%
\end{equation}
exists and we have the It\^{o} formula%
\begin{equation}
f\left(  X_{t}\right)  =f\left(  X_{0}\right)  +\int_{0}^{t}f^{\prime}\left(
X_{u}\right)  d^{\circ}X_u.\label{Ito}%
\end{equation}
Our goal is thus to prove (\ref{ForIto}) for a wide class of Gaussian
processes $X$, which will in turn imply the existence of (\ref{Stratoint}) and
the It\^{o} formula (\ref{Ito}).

If $X$ has homogeneous increments in the sense of Section \ref{HomogGaussSect}%
, meaning that $\mathbf{E}\left[  \left(  X_{s}-X_{t}\right)  ^{2}\right]  $
$=$ $\delta^{2}\left(  t-s\right)  $ for some univariate canonical metric
function $\delta$, then by using $g\equiv\mathbf{1}$ and our Theorem
\ref{HomogGauss}, we see that for (\ref{ForIto}) to hold, we must have
$\delta\left(  r\right)  =o\left(  r^{1/6}\right)  $. If one wishes to treat
non-homogeneous cases, we notice that (\ref{ForIto}) for $g\equiv1$ is the
result of our non-homogeneous Theorem \ref{nonhomoggauss}, so it is necessary
to use that theorem's hypotheses, which include the non-homogeneous version of
$\delta\left(  r\right)  =o\left(  r^{1/6}\right)  $. But we will also need
some non-degeneracy conditions in order to apply the quartic linear regression
method of \cite{GNRV}. These are Conditions (i) and (ii) in the next Theorem.
Condition (iii) therein is essentially a consequence of the condition that
$\delta^{2}$ be increasing and concave. These conditions are all further
discussed after the statement of the next theorem and its corollary.

\begin{theorem}
\label{ForItoThm}Let $m\geq3$ be an odd integer. Let $X$ be a Gaussian process
on $[0,1]$ satisfying the hypotheses of Theorem \ref{nonhomoggauss}. This
means in particular that we denote as usual its canonical metric by
$\delta^{2}\left(  s,t\right)  $, and that there exists a univariate
increasing and concave function $\delta^{2}$ such that $\delta\left(
r\right)  =o\left(  r^{1/(2m)}\right)  $ and $\delta^{2}\left(  s,t\right)
\leq\delta^{2}\left(  \left\vert t-s\right\vert \right)  $. Assume that for
$u<v$, the functions $u\mapsto Var\left[  X_{u}\right]  =:Q_{u}$,
$v\mapsto\delta^{2}\left(  u,v\right)  $, and $u\mapsto-\delta^{2}\left(
u,v\right)  $ are increasing and concave. Assume there exist positive
constants $a>1$, $b<1/2$, $c>1/4$, and $c^{\prime}>0$ such that for all
$\varepsilon<u<v\leq1$,

\begin{description}
\item[(i)] $c\delta^{2}\left(  u\right)  \leq Q_{u},$

\item[(ii)] $c^{\prime}\delta^{2}\left(  u\right)  \delta^{2}\left(
v-u\right)  \leq Q_{u}Q_{v}-Q^{2}\left(  u,v\right)  ,$

\item[(iii)]
\begin{equation}
\frac{\delta\left(  au\right)  -\delta\left(  u\right)  }{\left(  a-1\right)
u}<b\frac{\delta\left(  u\right)  }{u}. \label{concaviii}%
\end{equation}

\end{description}

Then for every bounded measurable function $g$ on $\mathbf{R}$,
\[
\lim_{\varepsilon\rightarrow0}\frac{1}{\varepsilon^{2}}\mathbf{E}\left[
\left(  \int_{0}^{1}du\left(  X_{u+\varepsilon}-X_{u}\right)  ^{m}g\left(
\frac{X_{u+\varepsilon}+X_{u}}{2}\right)  \right)  ^{2}\right]  =0.
\]

\end{theorem}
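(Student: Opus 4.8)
The plan is to mimic the proof of Theorem \ref{nonhomoggauss}, inserting a linear‑regression (conditioning) device to absorb the bounded weight $g$, following the strategy of \cite{GNRV}. Fix $\varepsilon>0$; for $u,v\in[0,1]$ set $Y:=X_{u+\varepsilon}-X_u$, $Z:=X_{v+\varepsilon}-X_v$, $P:=\tfrac12(X_{u+\varepsilon}+X_u)$ and $R:=\tfrac12(X_{v+\varepsilon}+X_v)$, a centred jointly Gaussian quadruple. Since $m$ is an odd integer, $\left(x\right)^{m}=\left\vert x\right\vert^{m}\mbox{sgn}(x)=x^{m}$ is a genuine monomial, so expanding the square gives
\[
\mathbf{E}\left[\left(\int_0^1\left(X_{u+\varepsilon}-X_u\right)^mg\left(\tfrac{X_{u+\varepsilon}+X_u}{2}\right)du\right)^2\right]=\int_0^1\int_0^1 du\,dv\ \mathbf{E}\left[Y^mZ^mg(P)g(R)\right].
\]
Write the linear regressions $Y=\hat{Y}+L_Y$ and $Z=\hat{Z}+L_Z$, where $L_Y$ and $L_Z$ are the $L^{2}$‑projections of $Y$ and $Z$ onto the span of $P$ and $R$ (hence linear forms in $(P,R)$), so that the centred Gaussian pair $(\hat{Y},\hat{Z})$ is independent of $(P,R)$. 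Expanding each factor by the binomial formula and using this independence,
\[
\mathbf{E}\left[Y^mZ^mg(P)g(R)\right]=\sum_{p=0}^{m}\sum_{q=0}^{m}\binom{m}{p}\binom{m}{q}\mathbf{E}\left[\hat{Y}^{m-p}\hat{Z}^{m-q}\right]\ \mathbf{E}\left[g(P)g(R)L_Y^{p}L_Z^{q}\right].
\]
The goal is to show that each summand, integrated over $[0,1]^{2}$ and divided by $\varepsilon^{2}$, tends to $0$, essentially reducing matters to the estimates of Theorem \ref{nonhomoggauss}.

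Exactly as in Step~1 of that theorem, the ``diagonal'' part, i.e.\ the integral over $\left\{\left\vert u-v\right\vert\leq2\varepsilon\right\}\cup\left\{u\leq2\varepsilon\right\}\cup\left\{v\leq2\varepsilon\right\}$, is handled by the brutal bound $\left\vert g(P)g(R)\right\vert\leq\left\Vert g\right\Vert_{\infty}^{2}$ together with Cauchy--Schwarz and $Var[Y],Var[Z]\leq\delta^{2}(\varepsilon)$, giving $\left\vert\mathbf{E}[Y^mZ^mg(P)g(R)]\right\vert\leq c_m\left\Vert g\right\Vert_{\infty}^{2}\delta^{2m}(\varepsilon)$; since this region has area $O(\varepsilon)$, its contribution is $O(\varepsilon^{-1}\delta^{2m}(\varepsilon))\to0$ by (\ref{nhdeltacond}). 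Off the diagonal, the main term is $p=q=0$: here one has $\mathbf{E}[\hat{Y}^m\hat{Z}^m]\,\mathbf{E}[g(P)g(R)]$ with $\left\vert\mathbf{E}[g(P)g(R)]\right\vert\leq\left\Vert g\right\Vert_{\infty}^{2}$, and the Gaussian product formula underlying Lemma \ref{lemma1} gives $\mathbf{E}[\hat{Y}^m\hat{Z}^m]=\sum_j c_j\,\mathbf{E}[\hat{Y}\hat{Z}]^{m-2j}Var(\hat{Y})^{j}Var(\hat{Z})^{j}$. Since $Var(\hat{Y}),Var(\hat{Z})\leq\delta^{2}(\varepsilon)$ and the residual covariance satisfies $\left\vert\mathbf{E}[\hat{Y}\hat{Z}]\right\vert\leq\left\vert\Theta^{\varepsilon}(u,v)\right\vert+\left\vert E(u,v)\right\vert$, where $E(u,v)$ is the regression correction term, the part of this expression built only from $\Theta^{\varepsilon}$ is precisely the integrand already controlled in Theorem \ref{nonhomoggauss} via (\ref{Thetamubirdie}) and (\ref{nhmubirdiecond}), while the part involving $E(u,v)$ is disposed of as in the next paragraph.

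Every remaining contribution carries either a positive power of $L_Y$ or $L_Z$, or the correction $E(u,v)$, and both kinds are estimated in the same spirit: by Hölder's inequality $\left\vert\mathbf{E}[g(P)g(R)L_Y^{p}L_Z^{q}]\right\vert\leq\left\Vert g\right\Vert_{\infty}^{2}\mathbf{E}[\left\vert L_Y\right\vert^{p+q}]^{p/(p+q)}\mathbf{E}[\left\vert L_Z\right\vert^{p+q}]^{q/(p+q)}$, while $\left\vert\mathbf{E}[\hat{Y}^{m-p}\hat{Z}^{m-q}]\right\vert\leq c_m\,\delta^{m-p}(\varepsilon)\delta^{m-q}(\varepsilon)$ by Cauchy--Schwarz. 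The moments of the linear forms $L_Y,L_Z$, and the size of $E(u,v)$, are governed by the sizes of the regression coefficients and by the smallest eigenvalue of the covariance matrix of $(P,R)$; this is exactly where Conditions (i), (ii), (iii) enter. Condition (i) bounds $Var(P)\asymp Q_u$ below by $c'\delta^{2}(u)$; Condition (ii) is the non‑degeneracy requirement that keeps the relevant regression determinant bounded away from zero, as needed to run the linear‑regression argument of \cite{GNRV}; Condition (iii) is the quantitative concavity estimate that controls $Cov(Y,P)=\tfrac12(Q_{u+\varepsilon}-Q_u)$ and the mixed covariances $Cov(Y,R)$, $Cov(Z,P)$ relative to $\delta^{2}$. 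Granting these bounds, each such contribution is smaller than the diagonal‑type estimate of the previous paragraph by a strictly positive power of $\varepsilon$, uniformly in $u,v$, hence vanishes after integration and division by $\varepsilon^{2}$.

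The hard part is this last step: obtaining the bounds on the regression coefficients, on $E(u,v)$, and on the moments of $L_Y,L_Z$ \emph{uniformly} over the off‑diagonal region --- in particular near the origin and near the edge $v\approx u+2\varepsilon$, where $Var(P)$ or the smallest eigenvalue of the covariance matrix of $(P,R)$ collapses to order $\delta^{2}(\varepsilon)$ (because $X_u$ and $X_v$ become nearly collinear there) and the regression threatens to degenerate. This is precisely the situation for which the ``quartic linear regression'' non‑degeneracy of \cite{GNRV} was designed, and Conditions (i), (ii), (iii) are tailored to supply it; once they are in force, everything else reduces to the estimates already carried out in the proof of Theorem \ref{nonhomoggauss}.
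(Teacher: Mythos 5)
Your outline matches the paper's strategy — both proofs run the linear--regression (``quartic linear regression'') argument of \cite{GNRV}, splitting the increments into a part independent of the midpoints plus a linear form in the midpoints, killing the diagonal by brute force, and reducing the independent part to the estimates of Theorem \ref{nonhomoggauss}. But the proposal stops exactly where the proof begins. The entire technical content of the paper's argument consists of the analogues of Lemmas 5.3 and 5.4 of \cite{GNRV} (Lemmas \ref{Lemma53} and \ref{Lemma54} here): the bound $\iint_{D_{\varepsilon}}\mathbf{E}\left[\left\vert \Gamma_{\ell}\right\vert ^{k}\right]dudv\leq cst\cdot\varepsilon\delta^{k}\left(\varepsilon\right)$ on the regression parts (your $L_Y,L_Z$), and the corresponding bound on $\iint_{D_\varepsilon}\left\vert\mathbf{E}\left[Z_3Z_4\right]\right\vert^{m-2j}$. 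Proving the first requires (a) deriving from (i) and (ii) the two-sided bound $K\left(u,u\right)\asymp\delta^{2}\left(u\right)$ and the determinant lower bound $\Delta\left(u,v\right)\geq c_{1}\delta^{2}\left(u\right)\delta^{2}\left(v-u\right)$, (b) estimating each entry $\Lambda_{21}[ij]$ via the monotonicity and concavity of $Q_u$ and $\delta^{2}\left(u,v\right)$, yielding pointwise bounds of the type $\left\vert r_{11}\right\vert\leq cst\,\varepsilon\,\delta\left(u\right)/u$, and (c) a careful splitting of the $u$-integral at $v/2$ for $r_{12}$ and $r_{22}$. You acknowledge this is ``the hard part'' and then simply assert it; that is the gap.

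Moreover, the one quantitative claim you do make about this step is the wrong mechanism. You assert that each regression contribution is smaller than the main term ``by a strictly positive power of $\varepsilon$, uniformly in $u,v$.'' The pointwise bounds actually degenerate near the lower edge of the off-diagonal region: $\left\vert r_{11}\right\vert\leq cst\,\varepsilon\,\delta\left(u\right)/u$ is of order $\delta\left(\varepsilon\right)$ when $u$ is near $\varepsilon$, so a uniform gain does not deliver the required factor $\varepsilon$ in $\iint_{D_{\varepsilon}}\left\vert r_{ij}\right\vert^{k}dudv\leq cst\cdot\varepsilon\delta^{k}\left(\varepsilon\right)$ (recall $\rho$ must be chosen small to handle the diagonal, so one cannot compensate by shrinking $D_\varepsilon$). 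The gain is an \emph{integrated} one: it comes from Lemma \ref{deltauuk}, namely $\int_{\varepsilon}^{1}\left\vert\delta\left(u\right)/u\right\vert^{k}du\leq c_{k}\varepsilon\left\vert\delta\left(\varepsilon\right)/\varepsilon\right\vert^{k}$, whose proof is a geometric-series argument over dyadic-type blocks $[\varepsilon a^{j},\varepsilon a^{j+1}]$ and is precisely where the strict inequality $b<1/2$ in hypothesis (iii) is used (to make $\left\vert K_{a,b}\right\vert^{k}a<1$ for some $a>1$). None of this appears in your proposal, so the role of (iii) — and with it the decisive estimate — is left unestablished.
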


When we apply this theorem to the case $m=3$, the assumption depending on $m$,
namely $\delta\left(  r\right)  =o\left(  r^{1/(2m)}\right)  $ is satisfied a
fortiori for $m=5$ as well, which means that under the assumption
$\delta\left(  r\right)  =o\left(  r^{1/6}\right)  $, the theorem's conclusion
holds for $m=3$ and $m=5$. Therefore, as mentioned in the strategy above, we
immediately get the following.

\begin{corollary}
\label{coroll}Assume the hypotheses of Theorem \ref{ForItoThm} with $m=3$. We
have existence of the symmetric integral in (\ref{Stratoint}), and its It\^{o}
formula (\ref{Ito}), for every $f\in C^{6}\left(  \mathbf{R}\right)  $ and
$t\in\lbrack0,1]$.
\end{corollary}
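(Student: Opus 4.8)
The plan is to deduce the corollary directly from Theorem \ref{ForItoThm} and the criterion from \cite{GNRV} recalled at the opening of this section, with essentially no new computation. That criterion requires, for every bounded measurable $g$ and \emph{every} integer $n\geq 1$, that the $\varepsilon^{-1}$-normalized quantity in (\ref{ForIto}) tend to zero in probability. Theorem \ref{ForItoThm} applied with $m=2n+1$ asserts exactly that $\varepsilon^{-2}\mathbf{E}[(\int_0^1 du\,(X_{u+\varepsilon}-X_u)^{m}g((X_{u+\varepsilon}+X_u)/2))^2]\to 0$; since this is precisely the squared $L^2(\Omega)$-norm of the expression appearing in (\ref{ForIto}), it gives convergence in $L^2(\Omega)$ and hence in probability. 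So the deduction hinges on a single bookkeeping point: that being granted the hypotheses of Theorem \ref{ForItoThm} for $m=3$ automatically supplies those hypotheses for \emph{all} odd $m=2n+1\geq 3$ at once.

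The key observation is that the only two $m$-dependent hypotheses both weaken as $m$ increases. First, for $r\in(0,1)$ and any odd $m'>3$ one has $1/(2m')<1/6$, so $r^{1/6}=o(r^{1/(2m')})$; combined with $\delta(r)=o(r^{1/6})$ this yields $\delta(r)=o(r^{1/(2m')})$, so condition (\ref{nhdeltacond}) for $m=3$ forces (\ref{nhdeltacond}) for every larger odd $m$. Second, the exponent in (\ref{nhmubirdiecond}) satisfies $-1+1/3>-1+1/m'$ for $m'>3$, whence $\varepsilon^{-2/3}\leq\varepsilon^{-1+1/m'}$ for $\varepsilon$ small, so the $m=3$ bound $|\mu|(OD)\leq c\varepsilon^{-2/3}$ implies $|\mu|(OD)\leq c\varepsilon^{-1+1/m'}$ with the same constant. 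All remaining hypotheses --- the finiteness of the signed measure $\mu$, the monotonicity and concavity assumptions inherited from Theorem \ref{nonhomoggauss}, and conditions (i), (ii), (iii) of Theorem \ref{ForItoThm} --- make no reference to $m$ and so hold verbatim for every odd $m\geq 3$.

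With this in hand the argument closes: for each $n\geq 1$, Theorem \ref{ForItoThm} with $m=2n+1$ yields (\ref{ForIto}) in $L^2(\Omega)$, hence in probability, for every bounded measurable $g$; feeding this into the \cite{GNRV} criterion produces the existence of the symmetric integral (\ref{Stratoint}) and the It\^{o} formula (\ref{Ito}) for every $f\in C^6(\mathbf{R})$ and every $t\in[0,1]$, which is exactly the corollary. I expect the only delicate point to be the exponent bookkeeping of the previous paragraph, since it is precisely what bridges the single-$m$ form of Theorem \ref{ForItoThm} and the ``for all $n$'' demand of the \cite{GNRV} criterion; everything else is a direct citation.
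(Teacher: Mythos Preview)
Your proposal is correct and matches the paper's intent: the paper states the corollary ``immediately'' follows from Theorem \ref{ForItoThm} and the \cite{GNRV} criterion without writing out a proof, and your bookkeeping --- that the $m$-dependent hypotheses (\ref{nhdeltacond}) and (\ref{nhmubirdiecond}) only weaken as $m$ increases, while conditions (i)--(iii) are $m$-free --- is exactly the bridge the word ``immediately'' is hiding. There is nothing to add.
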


Before proceeding to the proof of this theorem, we discuss its hypotheses. We
refer to the description at the end of Section \ref{NonHomogGaussSect} for
examples satisfying the hypotheses of Theorem \ref{nonhomoggauss}; these
examples also satisfy the monotonicity and convexity conditions in the above theorem.

Condition (i) is a type of coercivity assumption on the non-degeneracy of
$X$'s variances in comparison to its increments' variances. The hypotheses of
Theorem \ref{nonhomoggauss} imply that $Q_{u}\leq\delta^{2}\left(  u\right)
$, and Condition (i) simply adds that these two quantities should be
commensurate, with a lower bound that it not too small. The "Volterra
convolution"-type class of processes (\ref{volterra}) given at the end of
Section \ref{NonHomogGaussSect}, which includes the Riemann-Liouville fBm's,
satisfies Condition (i) with $c=1/2$. In the homogeneous case, (i) is
trivially satisfied since $Q_{u}\equiv\delta^{2}\left(  u\right)  $.

Condition (ii) is also a type of coercivity condition. It too is satisfied in
the homogeneous case. We prove this claim, since it is not immediately
obvious. In the homogeneous case, since $\delta^{2}\left(  u,v\right)
=\delta^{2}\left(  v-u\right)  =Q_{v-u}$, we calculate%
\[
Q_{u}Q_{v}-Q^{2}\left(  u,v\right)  =Q_{u}Q_{v}-4^{-1}\left(  Q_{u}%
+Q_{v}-Q_{v-u}\right)  ^{2}%
\]
and after rearranging some terms we obtain%
\[
Q_{u}Q_{v}-Q^{2}\left(  u,v\right)  =2^{-1}Q_{v-u}\left(  Q_{u}+Q_{v}\right)
-4^{-1}\left(  Q_{v}-Q_{u}\right)  ^{2}-4^{-1}Q_{v-u}^{2}.
\]
We note first that by the concavity of $Q$, we have $Q_{v}-Q_{u}<Q_{v-u}$, and
consequently, $\left(  Q_{v}-Q_{u}\right)  ^{2}\leq\left(  Q_{v}-Q_{u}\right)
Q_{v-u}\leq Q_{v}Q_{v-u}$. This implies%
\[
Q_{u}Q_{v}-Q^{2}\left(  u,v\right)  \geq2^{-1}Q_{v-u}Q_{u}+4^{-1}\left(
Q_{v-u}Q_{v}-Q_{v-u}^{2}\right)  .
\]
Now by monotonicity of $Q$, we can write $Q_{v-u}Q_{v}\geq Q_{v-u}^{2}$. This,
together with Condition (i), yield Condition (ii) since we now have%
\[
Q_{u}Q_{v}-Q^{2}\left(  u,v\right)  \geq2^{-1}Q_{v-u}Q_{u}\geq2^{-1}%
c^{2}\delta^{2}\left(  v-u\right)  \delta^{2}\left(  u\right)  .
\]

Lastly, Condition (iii) represents a strengthened concavity condition on the
univariate function $\delta$. Indeed, the left-hand side in (\ref{concaviii})
is the slope of the secant of the graph of $\delta$ between the points $u$ and
$au$, while the right-hand side is $b$ times the slope of the secant from $0$
to $u$. If $b$ were allowed to be $1$, (iii) would simply be a consequence of
convexity. Here taking $b\leq1/2$ means that we are exploiting the concavity
of $\delta^{2}$; the fact that condition (iii) requires slightly more, namely
$b$ strictly less than $1/2$, allows us to work similarly to the scale
$\delta\left(  r\right)  =r^{H}$ with $H<1/2$, as opposed to simply asking
$H\leq1/2$. Since the point of the Theorem is to allow continuity moduli which
are arbitrarily close to $r^{1/6}$, Condition (iii) is hardly a
restriction.\bigskip

\textbf{Proof of Theorem \ref{ForItoThm}.}

\bigskip

\noindent\emph{Step 0: setup.} The expectation to be evaluated is written, as
usual, as a double integral over $\left(  u,v\right)  \in\lbrack0,1]^{2}$. For
$\varepsilon>0$ fixed, we define the \textquotedblleft
off-diagonal\textquotedblright\ set
\[
D_{\varepsilon}=\left\{  \left(  u,v\right)  \in\lbrack0,1]^{2}:\varepsilon
^{1-\rho}\leq u\leq v-\varepsilon^{1-\rho}<v\leq1\right\}
\]
where $\rho\in(0,1)$ is fixed. Using the boundedness of $g$ and
Cauchy-Schwarz's inequality, thanks to the hypothesis $\delta\left(  r\right)
=o\left(  r^{1/\left(  2m\right)  }\right)  $, the term corresponding to the
diagonal part (integral over $D_{\varepsilon}^{c}$) can be treated identically
to what was done in \cite{GNRV} in dealing with their term $\mathcal{J}%
^{\prime}\left(  \varepsilon\right)  $ following the statement of their Lemma
5.1, by choosing $\rho$ small enough. It is thus sufficient to prove that
\[
\mathcal{J}\left(  \varepsilon\right)  :=\frac{1}{\varepsilon^{2}}%
\mathbf{E}\left[  \iint_{D_{\varepsilon}}dudv\left(  X_{u+\varepsilon}%
-X_{u}\right)  ^{m}\left(  X_{v+\varepsilon}-X_{v}\right)  ^{m}g\left(
\frac{X_{u+\varepsilon}+X_{u}}{2}\right)  g\left(  \frac{X_{v+\varepsilon
}+X_{v}}{2}\right)  \right]
\]
tends to $0$ as $\varepsilon$ tends to $0$. We now use the same method and
notation as in Step 3 of the proof of Theorem 4.1 in \cite{GNRV}. In order to
avoid repeating arguments from that proof, we only state and prove the new
lemmas which are required.\vspace{0.15in}

\noindent\emph{Step 1: translating Lemma 5.3 from \cite{GNRV}.} Using the fact
that $\mathbf{E}\left[  Z_{\ell}^{2}\right]  \leq\mathbf{E}\left[  G_{\ell
}^{2}\right]  \leq\delta^{2}\left(  \varepsilon\right)  $, this lemma
translates as:

\begin{lemma}
\label{Lemma53}Let $k\geq2$ be an integer. Then
\[
\iint_{D_{\varepsilon}}\mathbf{E}\left[  \left\vert \Gamma_{\ell}\right\vert
^{k}\right]  dudv\leq cst\cdot\varepsilon\delta^{k}\left(  \varepsilon\right)
.
\]

\end{lemma}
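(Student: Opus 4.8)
The plan is to follow the proof of Lemma 5.3 in \cite{GNRV} essentially line by line, replacing every occurrence of the fractional Brownian covariance $r^{H}$ and of its increments by the general univariate canonical metric $\delta$, and compensating for the loss of self-similarity (which \cite{GNRV} exploits by rescaling $\varepsilon$ to $1$) with the concavity of $\delta^{2}$ together with the off-diagonal integral estimates already established in Sections \ref{HomogGaussSect}--\ref{NonHomogGaussSect}. Recall that in \cite{GNRV} the $\Gamma_{\ell}$ are the off-diagonal quantities produced by the quartic linear regression of the pair of increments $X_{u+\varepsilon}-X_{u}$, $X_{v+\varepsilon}-X_{v}$ against the two midpoint variables $\tfrac{1}{2}(X_{u+\varepsilon}+X_{u})$, $\tfrac{1}{2}(X_{v+\varepsilon}+X_{v})$; after orthogonalization, $\mathbf{E}\left[|\Gamma_{\ell}|^{k}\right]$ is, up to an $\varepsilon$-independent constant, the $k$-th power of a deterministic covariance-scale quantity $\gamma_{\ell}(u,v)$ which is either a planar increment of $\delta^{2}$ over the rectangle $[u,u+\varepsilon]\times[v,v+\varepsilon]$ --- i.e.\ a $\Theta^{\varepsilon}(u,v)$ of Lemma \ref{lemma1} --- or a single forward increment $\delta^{2}(|v-u|+\varepsilon)-\delta^{2}(|v-u|)$, or a variance drift $Q_{v+\varepsilon}-Q_{v}$.

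The argument then rests on two elementary bounds for $\gamma_{\ell}$ on the off-diagonal set $D_{\varepsilon}$ of Step 0 (which is contained in the off-diagonal simplices handled in Theorems \ref{HomogGauss} and \ref{nonhomoggauss}, so their estimates apply verbatim). First, a \emph{pointwise} bound $\gamma_{\ell}(u,v)\le\delta^{2}(\varepsilon)$: for the genuine covariance it is Cauchy--Schwarz, $\gamma_{\ell}$ being a product of two standard deviations of regressed increments, each at most $\delta(\varepsilon)$ by $\mathbf{E}[Z_{\ell}^{2}]\le\mathbf{E}[G_{\ell}^{2}]\le\delta^{2}(\varepsilon)$; for the forward increment it is the fact that the increments of the concave function $\delta^{2}$ are non-increasing together with $\delta^{2}(0)=0$, whence $\delta^{2}(w+\varepsilon)-\delta^{2}(w)\le\delta^{2}(\varepsilon)$; and for the variance drift it is the same concavity argument applied to $Q$, using $Q_{\varepsilon}=\delta^{2}(0,\varepsilon)\le\delta^{2}(\varepsilon)$. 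Second, an \emph{integral} bound $\iint_{D_{\varepsilon}}\gamma_{\ell}(u,v)\,du\,dv\le c\varepsilon$: for the planar increment this is exactly the telescoping-and-Fubini computation of Step 3 of the proof of Theorem \ref{HomogGauss}, culminating in (\ref{usein43})--(\ref{claimstep3}), and the analogous Step 2.1 of the proof of Theorem \ref{nonhomoggauss}; for the forward increment one has $\int_{\varepsilon}^{T}\left(\delta^{2}(w+\varepsilon)-\delta^{2}(w)\right)dw\le\varepsilon\,\delta^{2}(T+\varepsilon)$, and for the drift $\int_{\varepsilon}^{T}\left(Q_{v+\varepsilon}-Q_{v}\right)dv\le\varepsilon\,Q_{T+\varepsilon}$, both $\le c\varepsilon$ since $\delta^{2}$ and $Q$ are bounded on $[0,T]$.

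Combining the two, peel off $k-1$ of the factors of $\gamma_{\ell}$ by the pointwise bound and integrate the last one:
\[
\iint_{D_{\varepsilon}}\mathbf{E}\!\left[|\Gamma_{\ell}|^{k}\right]du\,dv\ \le\ c_{k}\,\delta^{2(k-1)}(\varepsilon)\iint_{D_{\varepsilon}}\gamma_{\ell}(u,v)\,du\,dv\ \le\ c_{k}'\,\varepsilon\,\delta^{2(k-1)}(\varepsilon).
\]
Since $k\ge2$ we have $2(k-1)\ge k$, and since $\delta(\varepsilon)\to0$ we have $\delta(\varepsilon)\le1$ for $\varepsilon$ small, so $\delta^{2(k-1)}(\varepsilon)\le\delta^{k}(\varepsilon)$ and the right-hand side is at most $c_{k}'\,\varepsilon\,\delta^{k}(\varepsilon)$; this last step is precisely where the hypothesis $k\ge2$ enters. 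I expect the one genuine difficulty to be the bookkeeping of the translation: one must check that \emph{every} $\Gamma_{\ell}$ arising in the \cite{GNRV} orthogonalization is of one of the three elementary types above with an $\varepsilon$-independent multiplicative constant, i.e.\ that the regression coefficients stay bounded on $D_{\varepsilon}$. This is exactly the purpose of the non-degeneracy hypotheses (i) and (ii) and of the monotonicity and concavity assumptions of Theorem \ref{ForItoThm}: they keep the Gram matrix of the increment-and-midpoint vector uniformly (in $(u,v)\in D_{\varepsilon}$ and $\varepsilon$ small) non-degenerate, so that orthogonalization does not amplify the small quantities $Z_{\ell}$ and $\gamma_{\ell}$.
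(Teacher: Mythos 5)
Your proposal has a genuine gap: it mis-identifies what $\mathbf{E}\left[\left\vert\Gamma_{\ell}\right\vert^{k}\right]$ is made of. The quantities $\Gamma_{\ell}$ are built from the \emph{normalized} regression coefficients $r_{ij}$, e.g. $r_{11}=\Lambda_{21}[11]/\sqrt{K\left(u,u\right)}$ and $r_{12},r_{22}$ carry the factors $K\left(u,v\right)/\sqrt{K\left(u,u\right)\Delta\left(u,v\right)}$ and $\sqrt{K\left(u,u\right)}/\sqrt{\Delta\left(u,v\right)}$, not the raw covariances $\Lambda_{21}[ij]$ alone. Your two "elementary bounds" (pointwise $\le\delta^{2}\left(\varepsilon\right)$, integral $\le c\varepsilon$) are correct for the unnormalized quantities, but the normalizing denominators are only bounded \emph{below} by $c\,\delta\left(u\right)$ and $c\,\delta\left(u\right)\delta\left(v-u\right)$ (items 1 and 3 in the paper, i.e. hypotheses (i)--(ii)), and on $D_{\varepsilon}$ these can be as small as $\delta\left(\varepsilon^{1-\rho}\right)$ and $\delta^{2}\left(\varepsilon^{1-\rho}\right)$; they are \emph{not} uniformly bounded away from zero, contrary to your closing claim that the regression coefficients "stay bounded". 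Tracking this loss through your scheme, the best pointwise bound on $\left\vert r_{11}\right\vert$ becomes $\delta^{2}\left(\varepsilon\right)/\delta\left(u\right)\le\delta\left(\varepsilon\right)$ rather than $\delta^{2}\left(\varepsilon\right)$, and "peel off $k-1$ factors and integrate the last" yields at best $cst\cdot\varepsilon\,\delta^{k-1}\left(\varepsilon\right)$ --- short of the target $\varepsilon\,\delta^{k}\left(\varepsilon\right)$ by a full power of $\delta\left(\varepsilon\right)$. For the cross terms $r_{12},r_{22}$ the situation is worse still, since the denominator $\delta\left(u\right)\delta\left(v-u\right)$ couples $u$ and $v-u$ and produces the genuinely singular integrand $\left\vert\delta\left(u\right)\delta\left(v\right)/\left(u\,\delta\left(v-u\right)\right)\right\vert^{k}$, which your three "elementary types" do not capture.

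The paper's actual mechanism is different in structure: it bounds each $\left\vert r_{ij}\right\vert$ pointwise by $\varepsilon$ times a singular function (e.g. $\varepsilon\,\delta\left(u\right)/u$), keeps the \emph{full} $k$-th power, and integrates it via the dedicated Lemma \ref{deltauuk}, which shows $\int_{\varepsilon}^{1}\left\vert\delta\left(u\right)/u\right\vert^{k}du\le c_{k}\varepsilon\left\vert\delta\left(\varepsilon\right)/\varepsilon\right\vert^{k}$; the $\varepsilon^{k}$ from the $k$ pointwise factors then exactly cancels the $\varepsilon^{-k}$ in that estimate to give $\varepsilon\,\delta^{k}\left(\varepsilon\right)$. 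Discarding $k-1$ of those $\varepsilon$'s, as your scheme does, loses this cancellation irretrievably. Note also that hypothesis (iii) (the strengthened concavity with $b<1/2$) is used precisely to sum the geometric series in the proof of Lemma \ref{deltauuk} --- i.e. to integrate $\left\vert\delta\left(u\right)/u\right\vert^{k}$ near $u=\varepsilon$ --- and not, as you suggest, to keep the Gram matrix non-degenerate; that role belongs to (i) and (ii), and even they only control the \emph{rate} at which the normalization degenerates, not its boundedness.
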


This step and the next 4 steps are devoted to the \textbf{Proof of lemma
\ref{Lemma53}. }We only need to show that for all $i,j\in\left\{  1,2\right\}
$,%
\begin{equation}
\iint_{D_{\varepsilon}}\left\vert r_{ij}\right\vert ^{k}dudv\leq
cst\cdot\varepsilon\delta^{k}\left(  \varepsilon\right)  . \label{intrijk}%
\end{equation}
Recall the function $K$ defined in \cite{GNRV}
\begin{align*}
K\left(  u,v\right)   &  :=\mathbf{E}\left[  \left(  X_{u+\varepsilon}%
+X_{u}\right)  \left(  X_{v+\varepsilon}+X_{v}\right)  \right] \\
&  =Q\left(  u+\varepsilon,v+\varepsilon\right)  +Q\left(  u,v+\varepsilon
\right)  +Q\left(  u+\varepsilon,v\right)  +Q\left(  u,v\right)  .
\end{align*}
This is not to be confused with the usage of the letter $K$ in previous
sections, to which there will be made no reference in this proof; the same
remark hold for the notation $\Delta$ borrowed again from \cite{GNRV}, and
used below.

To follow the proof in \cite{GNRV}, we need to prove the following items for
some constants $c_{1}$ and $c_{2}$:

\begin{enumerate}
\item $c_{1}\delta^{2}\left(  u\right)  \leq K\left(  u,u\right)  \leq
c_{2}\delta^{2}\left(  u\right)  ;$

\item $K\left(  u,v\right)  \leq c_{2}\delta\left(  u\right)  \delta\left(
v\right)  ;$

\item $\Delta\left(  u,v\right)  :=K\left(  u,u\right)  K\left(  v,v\right)
-K\left(  u,v\right)  ^{2}\geq c_{1}\delta^{2}\left(  u\right)  \delta
^{2}\left(  v-u\right)  .$
\end{enumerate}

By the Theorem's upper bound assumption on the bivariate $\delta^{2}$
(borrowed from Theorem \ref{nonhomoggauss}), its assumptions on the
monotonicity of $Q$ and the univariate $\delta$, and finally using the
coercivity assumption (i), we have%
\begin{align*}
K\left(  u,u\right)   &  =Q_{u}+Q_{u+\varepsilon}+2Q\left(  u,u+\varepsilon
\right)  =2\left(  Q_{u}+Q_{u+\varepsilon}\right)  -\delta^{2}\left(
u,u+\varepsilon\right) \\
&  \geq2\left(  Q_{u}+Q_{u+\varepsilon}\right)  -\delta^{2}\left(
\varepsilon\right) \\
&  \geq4Q_{u}-\delta^{2}\left(  \varepsilon\right) \\
&  \geq\left(  4-c^{-1}\right)  Q_{u}.
\end{align*}
This proves the lower bound in Item 1 above. The upper bound in Item 1 is a
special case of Item 2, which we now prove. Again, the assumption borrowed
from Theorem \ref{nonhomoggauss}, which says that $\delta^{2}\left(
s,t\right)  \leq\delta^{2}\left(  \left\vert t-s\right\vert \right)  $, now
implies, for $s=0$, that%
\begin{equation}
\delta^{2}\left(  0,u\right)  =Q_{u}\leq\delta^{2}\left(  u\right)  .
\label{Quudelta}%
\end{equation}
We write, via Cauchy-Schwarz's inequality and the fact that $\delta^{2}$ is
increasing, and thanks to (\ref{Quudelta}),%
\[
K\left(  u,v\right)  \leq4\delta\left(  u+\varepsilon\right)  \delta\left(
v+\varepsilon\right)  .
\]
However, since $\delta^{2}$ is concave with $\delta\left(  0\right)  =0$, we
have $\delta^{2}\left(  2u\right)  /2u\leq\delta^{2}\left(  u\right)  /u$.
Also, since we are in the set $D_{\varepsilon}$, $u+\varepsilon\leq2u$ and
$v+\varepsilon\leq2v$. Hence%
\begin{align*}
K\left(  u,v\right)   &  \leq4\delta\left(  2u\right)  \delta\left(  2v\right)
\\
&  \leq8\delta\left(  u\right)  \delta\left(  v\right)  ,
\end{align*}
which is Item 2.

We now verify Item 3 for all $u,v\in D_{\varepsilon}$ , assuming in addition
that $v$ is not too small, specifically $v>\varepsilon^{\rho/2}$. One can
estimate the integral in Lemma \ref{Lemma53} restricted to those values where
$v\leq\varepsilon^{\rho/2}$ using coarser tools than we use below; we omit the
corresponding calculations. From the definition of $K$ above, using the fact
that, by our concavity assumptions, $Q$ is, in both variables, a sum of
Lipschitz functions, we have, for small $\varepsilon$,%
\[
K\left(  u,v\right)  =4Q\left(  u,v\right)  +O\left(  \varepsilon\right)  .
\]
Therefore,
\[
\Delta=16\left(  Q_{u}Q_{v}-Q^{2}\left(  u,v\right)  \right)  +O\left(
\varepsilon\right)  .
\]
Assumption (ii) in the Theorem now implies%
\[
\Delta\geq16c^{\prime}\delta^{2}\left(  u\right)  \delta^{2}\left(
v-u\right)  +O\left(  \varepsilon\right)  .
\]
The concavity of $Q$ and Assumption (i) imply $\delta^{2}\left(  r\right)
\geq Q_{r}\geq cst\cdot r$. Moreover, because of the restriction on $v$,
either $v-u>cst\cdot\varepsilon^{\rho/2}$ or $u>cst\cdot\varepsilon^{\rho/2}$.
Therefore $\delta^{2}\left(  u\right)  \delta^{2}\left(  v-u\right)  \geq
cst\cdot\varepsilon^{1-\rho}\varepsilon^{\rho/2}\gg\varepsilon$. Therefore,
for $\varepsilon$ small enough, $\Delta\geq8c^{\prime}\delta^{2}\left(
u\right)  \delta^{2}\left(  v-u\right)  $, proving Item 3.\bigskip

It will now be necessary to reestimate the components of the matrix
$\Lambda_{21}$ where we recall%
\begin{align*}
\Lambda_{21}[11]  &  :=\mathbf{E}\left[  \left(  X_{u+\varepsilon}%
+X_{u}\right)  \left(  X_{u+\varepsilon}-X_{u}\right)  \right]  ,\\
\Lambda_{21}[12]  &  :=\mathbf{E}\left[  \left(  X_{v+\varepsilon}%
+X_{v}\right)  \left(  X_{u+\varepsilon}-X_{u}\right)  \right]  ,\\
\Lambda_{21}[21]  &  :=\mathbf{E}\left[  \left(  X_{u+\varepsilon}%
+X_{u}\right)  \left(  X_{v+\varepsilon}-X_{v}\right)  \right]  ,\\
\Lambda_{21}[22]  &  :=\mathbf{E}\left[  \left(  X_{v+\varepsilon}%
+X_{v}\right)  \left(  X_{v+\varepsilon}-X_{v}\right)  \right]  .
\end{align*}

\emph{Step 2: the term }$r_{11}$. We have by the lower bound of item 1 above
on $K\left(  u,u\right)  $,%
\[
\left\vert r_{11}\right\vert =\left\vert \frac{1}{\sqrt{K\left(  u,u\right)
}}\Lambda_{21}[11]\right\vert \leq\frac{cst}{\delta\left(  u\right)
}\left\vert \Lambda_{21}[11]\right\vert .
\]
To bound $\left\vert \Lambda_{21}[11]\right\vert $ above, we write%
\begin{align*}
\left\vert \Lambda_{21}[11]\right\vert  &  =\left\vert \mathbf{E}\left[
\left(  X_{u+\varepsilon}+X_{u}\right)  \left(  X_{u+\varepsilon}%
-X_{u}\right)  \right]  \right\vert \\
&  =Q_{u+\varepsilon}-Q_{u}\\
&  \leq\varepsilon Q\left(  u\right)  /u\\
&  \leq\varepsilon\delta^{2}\left(  u\right)  /u
\end{align*}
where we used the facts that $Q_{u}$ is increasing and concave, and that
$Q_{u}\leq\delta^{2}\left(  u\right)  $. Thus we have%
\[
\left\vert r_{11}\right\vert \leq\varepsilon~cst\frac{\delta\left(  u\right)
}{u}.
\]
The result (\ref{intrijk}) for $i=j=1$ now follows by the next lemma.

\begin{lemma}
\label{deltauuk}For every $k\geq2$, there exists $c_{k}>0$ such that for every
$\varepsilon\in(0,1)$,%
\[
\int_{\varepsilon}^{1}\left\vert \frac{\delta\left(  u\right)  }{u}\right\vert
^{k}du\leq c_{k}\varepsilon\left\vert \frac{\delta\left(  \varepsilon\right)
}{\varepsilon}\right\vert ^{k}.
\]

\end{lemma}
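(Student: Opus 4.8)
The plan is to reduce the lemma to an elementary estimate on concave functions, using hypothesis (iii) of the theorem to control the borderline behaviour. First I would collect the two structural facts about $\delta$ that I need: since $\delta^2$ is increasing and concave with $\delta^2(0)=0$, the quotient $u\mapsto\delta^2(u)/u$ is non-increasing (so $\delta(u)\le\delta(\varepsilon)\sqrt{u/\varepsilon}$ on $[\varepsilon,1]$), and $\delta=\sqrt{\delta^2}$ is itself concave with $\delta(0)=0$ (composition of the increasing concave map $\sqrt{\cdot}$ with the concave $\delta^2$), so $u\mapsto\delta(u)/u$ is non-increasing as well.

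The crucial step is to upgrade this to a strict power bound on $[\varepsilon,1]$. Setting $\rho:=1+b(a-1)$, condition (iii) reads $\delta(au)<\rho\,\delta(u)$ for $u\in[\varepsilon,1]$; iterating along the geometric sequence $\varepsilon,a\varepsilon,a^2\varepsilon,\dots$ (stopping once the argument exceeds $1$) and interpolating by monotonicity of $\delta$ yields
\[
\delta(u)\ \le\ \rho\,(u/\varepsilon)^{\beta}\,\delta(\varepsilon)\qquad(\varepsilon\le u\le 1),\qquad \beta:=\frac{\log\rho}{\log a}.
\]
The role of the strict inequality $b<1/2$ in (iii) is exactly to force $\beta<1/2$, i.e. $\rho<\sqrt a$; this holds once $a$ is chosen close enough to $1$, which is harmless since (iii) is an existence statement (compare the benchmark $\delta(r)=r^H$, $H<1/2$, discussed after the theorem). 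Granting the displayed bound, the conclusion follows by a direct computation: $\int_\varepsilon^1|\delta(u)/u|^k\,du\le\rho^k\,\delta(\varepsilon)^k\,\varepsilon^{-k\beta}\int_\varepsilon^1 u^{k(\beta-1)}\,du$, and since $\beta<1/2\le 1-1/k$ the exponent $k(\beta-1)$ is $<-1$, so the $u$-integral is bounded by a multiple of $\varepsilon^{k\beta-k+1}$; multiplying out gives exactly $c_k\,\varepsilon\,|\delta(\varepsilon)/\varepsilon|^k$ with $c_k=\rho^k/(k(1-\beta)-1)$.

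I expect the case $k=2$ to be the only genuine difficulty, and it is precisely where hypothesis (iii) is indispensable. For $k\ge 3$ the crude bound $\delta(u)\le\delta(\varepsilon)\sqrt{u/\varepsilon}$ coming from concavity of $\delta^2$ already suffices, because $\int_\varepsilon^1 u^{-k/2}\,du$ is of order $\varepsilon^{1-k/2}$; but for $k=2$ that same bound only yields $\varepsilon^{-1}\delta(\varepsilon)^2\log(1/\varepsilon)$, which is off by the logarithmic factor. Removing that logarithm requires the strengthened concavity in (iii), i.e. the strict inequality $\beta<1/2$, so in writing up the proof I would be careful to check that the iteration above really produces an exponent strictly below $1/2$ rather than merely $\le 1/2$.
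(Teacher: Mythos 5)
Your proof is correct and is essentially the paper's argument in different packaging: the paper iterates hypothesis (iii) along the geometric sequence $\varepsilon a^{j}$ and sums the resulting geometric series, whose convergence condition $aK_{a,b}^{k}<1$ with $K_{a,b}=\rho/a$ is precisely your condition $\beta=\log\rho/\log a<1-1/k$, secured in both cases by $b<1/2$ together with taking $a$ near $1$. Your two side observations are also sound — plain concavity of $\delta^{2}$ already handles $k\geq3$ and (iii) is only needed to remove the logarithm at $k=2$ — though note that the step of "choosing $a$ close enough to $1$" is a caveat you share with the paper rather than resolve, since (iii) is stated for one particular $a$ and, by concavity, the secant condition becomes harder (not easier) as $a$ decreases.
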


\textbf{Proof of lemma \ref{deltauuk}.} Our hypothesis (iii) can be rewritten
as%
\begin{align*}
\frac{\delta\left(  au\right)  }{au}  &  <\left(  \frac{1+\left(  a-1\right)
b}{a}\right)  \frac{\delta\left(  u\right)  }{u}\\
&  =:K_{a,b}\frac{\delta\left(  u\right)  }{u}.
\end{align*}
The concavity of $\delta$ also implies that $\delta\left(  u\right)  /u$ is
increasing. Thus we can write%
\begin{align*}
\int_{\varepsilon}^{1}\left\vert \frac{\delta\left(  u\right)  }{u}\right\vert
^{k}du  &  \leq\sum_{j=0}^{\infty}\int_{\varepsilon a^{j}}^{\varepsilon
a^{j+1}}\left\vert \frac{\delta\left(  u\right)  }{u}\right\vert ^{k}du\\
&  \leq\sum_{j=0}^{\infty}\left(  \varepsilon a^{j+1}-\varepsilon
a^{j}\right)  \left\vert K_{a,b}\right\vert ^{jk}\left\vert \frac
{\delta\left(  \varepsilon\right)  }{\varepsilon}\right\vert ^{k}\\
&  =\varepsilon\left(  a-1\right)  \left\vert \frac{\delta\left(
\varepsilon\right)  }{\varepsilon}\right\vert ^{k}\sum_{j=0}^{\infty}\left(
\left\vert K_{a,b}\right\vert ^{k}a\right)  ^{j}.
\end{align*}
The lemma will be proved if we can show that $f\left(  a\right)  :=\left\vert
K_{a,b}\right\vert ^{k}a<1$ for some $a>1$. We have $f\left(  1\right)  =0$
and $f^{\prime}\left(  1\right)  =k\left(  1-b\right)  -1$. This last quantity
is strictly positive for all $k\geq2$ as soon as $b<1/2$. This finishes the
proof of the lemma \ref{deltauuk}.\hfill$\square$\vspace{0.15in}

\emph{Step 3: the term }$r_{12}$. We have
\[
r_{12}=\Lambda_{21}\left[  11\right]  \frac{-K\left(  u,v\right)  }%
{\sqrt{K\left(  u,u\right)  \Delta\left(  u,v\right)  }}+\Lambda_{21}\left[
12\right]  \frac{\sqrt{K\left(  u,u\right)  }}{\sqrt{\Delta\left(  u,v\right)
}}.
\]
We saw in the previous step that $\left\vert \Lambda_{21}\left[  11\right]
\right\vert =\left\vert Q_{u+\varepsilon}-Q_{u}\right\vert \leq cst\cdot
\varepsilon\delta^{2}\left(  u\right)  /u$. For $\Lambda_{21}\left[
12\right]  $, using the hypotheses on our increasing and concave functions, we
calculate%
\begin{align}
\left\vert \Lambda_{21}\left[  12\right]  \right\vert  &  =\left\vert 2\left(
Q_{u+\varepsilon}-Q_{u}\right)  +\delta^{2}\left(  u+\varepsilon
,v+\varepsilon\right)  -\delta^{2}\left(  u,v+\varepsilon\right)  +\delta
^{2}\left(  u+\varepsilon,v\right)  -\delta^{2}\left(  u,v\right)  \right\vert
\nonumber\\
&  \leq2\left\vert \Lambda_{21}\left[  11\right]  \right\vert +\varepsilon
\delta^{2}\left(  u+\varepsilon,v+\varepsilon\right)  /\left(  v-u\right)
+\varepsilon\delta^{2}\left(  u+\varepsilon,v\right)  /\left(  v-u-\varepsilon
\right) \nonumber\\
&  \leq2\left\vert \Lambda_{21}\left[  11\right]  \right\vert +\varepsilon
\delta^{2}\left(  v-u\right)  /\left(  v-u\right)  +\varepsilon\delta
^{2}\left(  v-u-\varepsilon\right)  /\left(  v-u-\varepsilon\right)
\nonumber\\
&  \leq2cst\cdot\varepsilon\delta^{2}\left(  u\right)  /u+2\varepsilon
\delta^{2}\left(  v-u-\varepsilon\right)  /\left(  v-u-\varepsilon\right)  .
\label{Lambda12}%
\end{align}
The presence of the term $-\varepsilon$ in the last expression above is
slightly aggravating, and one would like to dispose of it. However, since
$\left(  u,v\right)  \in D_{\varepsilon}$, we have $v-u>\varepsilon^{\rho}$
for some $\rho\in\left(  0,1\right)  $. Therefore $v-u-\varepsilon
>\varepsilon^{\rho}-\varepsilon>\varepsilon^{\rho/2}$ for $\varepsilon$ small
enough. Hence by using $\rho/2$ instead of $\rho$ in the definition of
$D_{\varepsilon}$ in the current calculation, we can ignore the term
$-\varepsilon$ in the last displayed line above. Together with items 1, 2, and
3 above which enable us to control the terms $K$ and $\Delta$ in $r_{12}$, we
now have%
\begin{align*}
\left\vert r_{12}\right\vert  &  \leq cst\cdot\varepsilon\frac{\delta
^{2}\left(  u\right)  }{u}\left(  \frac{\delta\left(  u\right)  \delta\left(
v\right)  }{\delta\left(  u\right)  \delta\left(  u\right)  \delta\left(
v-u\right)  }+\frac{\delta\left(  u\right)  }{\delta\left(  u\right)
\delta\left(  v-u\right)  }\right) \\
&  +cst\cdot\varepsilon\frac{\delta^{2}\left(  v-u\right)  }{v-u}\frac
{\delta\left(  u\right)  }{\delta\left(  u\right)  \delta\left(  v-u\right)
}\\
&  =cst\cdot\varepsilon~\left(  \frac{\delta\left(  u\right)  \delta\left(
v\right)  }{u\delta\left(  v-u\right)  }+\frac{\delta^{2}\left(  u\right)
}{u\delta\left(  v-u\right)  }+\frac{\delta\left(  v-u\right)  }{v-u}\right)
.
\end{align*}
We may thus write%
\[
\iint_{D_{\varepsilon}}\left\vert r_{12}\right\vert ^{k}dudv\leq
cst\cdot\varepsilon^{k}~\iint_{D_{\varepsilon}}\left(  \left\vert \frac
{\delta\left(  u\right)  \delta\left(  v\right)  }{u\delta\left(  v-u\right)
}\right\vert ^{k}+\left\vert \frac{\delta^{2}\left(  u\right)  }%
{u\delta\left(  v-u\right)  }\right\vert ^{k}+\left\vert \frac{\delta\left(
v-u\right)  }{v-u}\right\vert ^{k}\right)  dudv.
\]
The last term $\iint_{D_{\varepsilon}}\left\vert \frac{\delta\left(
v-u\right)  }{v-u}\right\vert ^{k}dudv$ is identical, after a trivial change
of variables, to the one dealt with in Step 2. Since $\delta$ is increasing,
second the term $\iint_{D_{\varepsilon}}\left\vert \frac{\delta^{2}\left(
u\right)  }{u\delta\left(  v-u\right)  }\right\vert ^{k}dudv$ is smaller than
the first term $\iint_{D_{\varepsilon}}\left\vert \frac{\delta\left(
u\right)  \delta\left(  v\right)  }{u\delta\left(  v-u\right)  }\right\vert
^{k}dudv$. Thus we only need to deal with that first term; it is more delicate
than what we estimated in Step 2.

We separate the integral over $u$ at the intermediate point $v/2$. When
$u\in\lbrack v/2,v-\varepsilon]$, we use the estimate%
\[
\frac{\delta\left(  u\right)  }{u}\leq\frac{\delta\left(  v/2\right)  }%
{v/2}\leq2\frac{\delta\left(  v\right)  }{v}.
\]
On the other hand when $u\in\lbrack\varepsilon,v/2]$ we simply bound
$1/\delta\left(  v-u\right)  $ by $1/\delta\left(  v/2\right)  $. Thus%
\begin{align*}
&  \iint_{D_{\varepsilon}}\left\vert \frac{\delta\left(  u\right)
\delta\left(  v\right)  }{u\delta\left(  v-u\right)  }\right\vert ^{k}dudv\\
&  =\int_{v=2\varepsilon}^{1}dv\int_{u=\varepsilon}^{v/2}\left\vert
\frac{\delta\left(  u\right)  \delta\left(  v\right)  }{u\delta\left(
v-u\right)  }\right\vert ^{k}du+\int_{v=\varepsilon}^{1}dv\int_{u=v/2}%
^{v-\varepsilon}\left\vert \frac{\delta\left(  u\right)  \delta\left(
v\right)  }{u\delta\left(  v-u\right)  }\right\vert ^{k}du\\
&  \leq\int_{v=2\varepsilon}^{1}dv\left\vert \frac{\delta\left(  v\right)
}{\delta\left(  v/2\right)  }\right\vert ^{k}\int_{u=\varepsilon}%
^{v/2}\left\vert \frac{\delta\left(  u\right)  }{u}\right\vert ^{k}%
du+2\int_{v=\varepsilon}^{1}\left\vert \frac{\delta^{2}\left(  v\right)  }%
{v}\right\vert ^{k}dv\int_{u=v/2}^{v-\varepsilon}\left\vert \frac{1}%
{\delta\left(  v-u\right)  }\right\vert ^{k}du\\
&  \leq2^{k}\int_{u=\varepsilon}^{1}\left\vert \frac{\delta\left(  u\right)
}{u}\right\vert ^{k}du+2\frac{1}{\delta^{k}\left(  \varepsilon\right)  }%
\int_{v=\varepsilon}^{1}v^{k}\left\vert \frac{\delta\left(  v\right)  }%
{v}\right\vert ^{2k}dv\\
&  \leq cst\cdot\varepsilon\left(  \frac{\delta\left(  \varepsilon\right)
}{\varepsilon}\right)  ^{k};
\end{align*}
here we used the concavity of $\delta$ to imply that $\delta\left(  v\right)
/\delta\left(  v/2\right)  \leq2$, and to obtain the last line, we used Lemma
\ref{deltauuk} for the first term in the previous line, and we used the fact
that $\delta$ is increasing and that $v\leq1$, together again with Lemma
\ref{deltauuk} for the second term in the previous line. This finishes the
proof of (\ref{intrijk}) for $r_{12}.$\vspace{0.15in}

\emph{Step 4: the term }$r_{21}$. We have%
\[
r_{21}=\Lambda_{21}\left[  21\right]  \frac{1}{\sqrt{K\left(  u,u\right)  }}%
\]
and similarly to the previous step,%
\begin{align*}
\left\vert \Lambda_{21}\left[  21\right]  \right\vert  &  =\left\vert Q\left(
u+\varepsilon,v+\varepsilon\right)  -Q\left(  u+\varepsilon,v\right)
+Q\left(  u,v+\varepsilon\right)  -Q\left(  u,v\right)  \right\vert \\
&  =\left\vert 2\left(  Q_{v+\varepsilon}-Q_{v}\right)  +\delta^{2}\left(
u+\varepsilon,v\right)  -\delta^{2}\left(  u+\varepsilon,v+\varepsilon\right)
+\delta^{2}\left(  u,v\right)  -\delta^{2}\left(  u,v+\varepsilon\right)
\right\vert \\
&  \leq2\left\vert \Lambda_{21}\left[  11\right]  \right\vert +\varepsilon
\frac{\delta^{2}\left(  u+\varepsilon,v\right)  }{v-u-\varepsilon}%
+\varepsilon\frac{\delta^{2}\left(  u,v\right)  }{v-u}\\
&  \leq2cst\cdot\varepsilon\delta^{2}\left(  u\right)  /u+4\varepsilon
\delta^{2}\left(  v-u\right)  /\left(  v-u\right)  ,
\end{align*}
which is the same expression as in (\ref{Lambda12}). Hence with the lower
bound of Item 1 on $K\left(  u,u\right)  $ we have%
\begin{align*}
\iint_{D_{\varepsilon}}\left\vert r_{21}\right\vert ^{k}dudv  &  \leq
cst\cdot\varepsilon^{k}~\iint_{D_{\varepsilon}}\left(  \left\vert \frac
{\delta\left(  u\right)  }{u}\right\vert ^{k}+\left\vert \frac{\delta
^{2}\left(  v-u\right)  }{\left(  v-u\right)  \delta\left(  u\right)
}\right\vert ^{k}\right)  dudv\\
&  =cst\cdot\varepsilon^{k}~\iint_{D_{\varepsilon}}\left(  \left\vert
\frac{\delta\left(  u\right)  }{u}\right\vert ^{k}+\left\vert \frac{\delta
^{2}\left(  u\right)  }{u\delta\left(  v-u\right)  }\right\vert ^{k}\right)
dudv.
\end{align*}
This is bounded above by the expression obtained as an upper bound in Step 3
for $\iint_{D_{\varepsilon}}\left\vert r_{12}\right\vert ^{k}dudv$, which
finishes the proof of (\ref{intrijk}) for $r_{21}.$\vspace{0.15in}

\emph{Step 5: the term }$r_{22}$. Here we have%
\[
r_{22}=\Lambda_{21}\left[  21\right]  \frac{-K\left(  u,v\right)  }%
{\sqrt{K\left(  u,u\right)  \Delta\left(  u,v\right)  }}+\Lambda_{21}\left[
22\right]  \frac{\sqrt{K\left(  u,u\right)  }}{\sqrt{\Delta\left(  u,v\right)
}}.
\]
We have already seen in the previous step that
\[
\left\vert \Lambda_{21}\left[  21\right]  \right\vert \leq cst\cdot
\varepsilon\left(  \frac{\delta^{2}\left(  u\right)  }{u}+\frac{\delta
^{2}\left(  v-u\right)  }{v-u}\right)  .
\]
Moreover, we have, as in Step 2,%
\[
\left\vert \Lambda_{21}\left[  22\right]  \right\vert =\left\vert
Q_{v+\varepsilon}-Q_{v}\right\vert \leq cst\cdot\varepsilon\frac{\delta
^{2}\left(  v\right)  }{v}.
\]
Thus using the bounds in items 1, 2, and 3,%
\begin{align*}
\left\vert r_{22}\right\vert  &  \leq cst\cdot\varepsilon\left[  \left(
\frac{\delta^{2}\left(  u\right)  }{u}+\frac{\delta^{2}\left(  v-u\right)
}{v-u}\right)  \frac{\delta\left(  u\right)  \delta\left(  v\right)  }%
{\delta^{2}\left(  u\right)  \delta\left(  v-u\right)  }+\frac{\delta
^{2}\left(  v\right)  }{v}\frac{\delta\left(  u\right)  }{\delta\left(
u\right)  \delta\left(  v-u\right)  }\right] \\
&  =cst\cdot\varepsilon\left[  \frac{\delta\left(  u\right)  \delta\left(
v\right)  }{u\delta\left(  v-u\right)  }+\frac{\delta\left(  v\right)
\delta\left(  v-u\right)  }{\delta\left(  u\right)  \left(  v-u\right)
}+\frac{\delta^{2}\left(  v\right)  }{v\delta\left(  v-u\right)  }\right]  .
\end{align*}
Of the last three terms, the first term was already treated in Step 3, the
second is, up to a change of variable, identical to the first, and the third
is smaller than $\frac{\delta^{2}\left(  u\right)  }{u\delta\left(
v-u\right)  }$ which was also treated in Step 3. Thus (\ref{intrijk}) is
proved for $r_{22}$, which finishes the entire proof of Lemma \ref{Lemma53}%
.\hfill$\square$\vspace{0.15in}

\noindent\emph{Step 6: translating Lemma 5.4 from \cite{GNRV}.} We will prove
the following result

\begin{lemma}
\label{Lemma54}For all $j\in\{0,1,\cdots,\left(  m-1\right)  /2\}$,%
\[
\iint_{D_{\varepsilon}}\left\vert \mathbf{E}\left[  Z_{3}Z_{4}\right]
\right\vert ^{m-2j}dudv\leq cst\cdot\varepsilon\delta^{2\left(  m-2j\right)
}\left(  \varepsilon\right)  .
\]

\end{lemma}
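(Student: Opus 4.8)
The plan is to start from the identity defining $Z_{3}$ and $Z_{4}$ in the quartic Gaussian regression of \cite{GNRV}: writing $\Delta_{1}=X_{u+\varepsilon}-X_{u}$, $\Delta_{2}=X_{v+\varepsilon}-X_{v}$, and decomposing each increment as $\Delta_{i}=r_{i1}\hat{G}_{1}+r_{i2}\hat{G}_{2}+Z_{i+2}$, where $\hat{G}_{1},\hat{G}_{2}$ is the orthonormal basis obtained (by Gram--Schmidt) from $\left(  X_{u+\varepsilon}+X_{u},\,X_{v+\varepsilon}+X_{v}\right)  $ and $Z_{3},Z_{4}$ are orthogonal to both, one has, by construction,
\[
\mathbf{E}\left[  Z_{3}Z_{4}\right]  =\mathbf{E}\left[  \Delta_{1}\Delta
_{2}\right]  -r_{11}r_{21}-r_{12}r_{22}=\Theta^{\varepsilon}\left(  u,v\right)
-r_{11}r_{21}-r_{12}r_{22}.
\]
Moreover $Var\left[  Z_{\ell}\right]  \leq Var\left[  X_{\cdot+\varepsilon}-X_{\cdot}\right]  \leq\delta^{2}\left(  \varepsilon\right)  $, so Schwartz's inequality gives the pointwise bound $\left\vert \mathbf{E}\left[  Z_{3}Z_{4}\right]  \right\vert \leq\delta^{2}\left(  \varepsilon\right)  $ on all of $D_{\varepsilon}$.

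This pointwise bound reduces the whole lemma to the exponent $m-2j=1$: since $\left\vert \mathbf{E}\left[  Z_{3}Z_{4}\right]  \right\vert ^{m-2j}\leq\delta^{2\left(  m-2j-1\right)  }\left(  \varepsilon\right)  \left\vert \mathbf{E}\left[  Z_{3}Z_{4}\right]  \right\vert $ for every $j\leq\left(  m-1\right)  /2$, it is enough to show $\iint_{D_{\varepsilon}}\left\vert \mathbf{E}\left[  Z_{3}Z_{4}\right]  \right\vert dudv\leq cst\cdot\varepsilon\delta^{2}\left(  \varepsilon\right)  $. I would then split $\left\vert \mathbf{E}\left[  Z_{3}Z_{4}\right]  \right\vert \leq\left\vert \Theta^{\varepsilon}\left(  u,v\right)  \right\vert +\left\vert r_{11}r_{21}\right\vert +\left\vert r_{12}r_{22}\right\vert $ and dispose of the regression terms first: a Cauchy--Schwarz inequality on the double integral combined with Lemma \ref{Lemma53} (precisely, the estimate $\iint_{D_{\varepsilon}}\left\vert r_{ij}\right\vert ^{2}dudv\leq cst\cdot\varepsilon\delta^{2}\left(  \varepsilon\right)  $ established in its proof for $k=2$) yields $\iint_{D_{\varepsilon}}\left\vert r_{1i}r_{2i}\right\vert dudv\leq cst\cdot\varepsilon\delta^{2}\left(  \varepsilon\right)  $ for $i=1,2$.

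The remaining piece, $\iint_{D_{\varepsilon}}\left\vert \Theta^{\varepsilon}\left(  u,v\right)  \right\vert dudv$, is the heart of the matter. Here I would use $\Theta^{\varepsilon}\left(  u,v\right)  =\mu\left(  \left[  u,u+\varepsilon\right]  \times\left[  v,v+\varepsilon\right]  \right)  $ as in (\ref{Thetamubirdie}), bound $\left\vert \Theta^{\varepsilon}\left(  u,v\right)  \right\vert \leq\left\vert \mu\right\vert \left(  \left[  u,u+\varepsilon\right]  \times\left[  v,v+\varepsilon\right]  \right)  $, and apply Tonelli's theorem for the positive measure $\left\vert \mu\right\vert $: every $\left(  u,v\right)  \in D_{\varepsilon}$ being at distance at least $\varepsilon^{1-\rho}$ from the diagonal, each $\left(  a,w\right)  $ that contributes satisfies $w-a\geq\tfrac{1}{2}\varepsilon^{1-\rho}=:\tau$ for $\varepsilon$ small, while the set of $\left(  u,v\right)  \in D_{\varepsilon}$ with $u\leq a\leq u+\varepsilon$ and $v\leq w\leq v+\varepsilon$ has area at most $\varepsilon^{2}$. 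Hence, applying hypothesis (\ref{nhmubirdiecond}) at the scale $\tau$,
\[
\iint_{D_{\varepsilon}}\left\vert \Theta^{\varepsilon}\left(  u,v\right)
\right\vert dudv\leq\varepsilon^{2}\left\vert \mu\right\vert \left(  \left\{
0\leq a\leq w-\tau\leq1\right\}  \right)  \leq cst\cdot\varepsilon^{2}
\tau^{-1+1/m}=cst\cdot\varepsilon^{1+1/m+\rho\left(  1-1/m\right)  }.
\]
Since $\delta^{2}\left(  \varepsilon\right)  =o\left(  \varepsilon^{1/m}\right)  $, one gets $\iint_{D_{\varepsilon}}\left\vert \Theta^{\varepsilon}\right\vert dudv\leq cst\cdot\varepsilon\delta^{2}\left(  \varepsilon\right)  $ as soon as $\rho$ is chosen large enough that $\varepsilon^{1/m+\rho\left(  1-1/m\right)  }\leq cst\cdot\delta^{2}\left(  \varepsilon\right)  $; this requirement is compatible with the smallness of $\rho$ imposed by the diagonal estimate of Step 0 precisely because $m\geq3$. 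Combining this with the bound on the regression terms proves the $m-2j=1$ case, hence, via the reduction above, the full lemma.

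The step I expect to be the main obstacle is exactly this control of $\iint_{D_{\varepsilon}}\left\vert \Theta^{\varepsilon}\right\vert dudv$: off the diagonal $\Theta^{\varepsilon}$ is, in general, of the same order of magnitude as the quantity one is trying to bound, and unlike the homogeneous case of Theorem \ref{HomogGauss} there is no exact cancellation available, so one must carefully balance the cutoff exponent $\rho$ against the regularity exponent of $\delta$ --- which is where $m\geq3$ is used. As an alternative that avoids constraining $\rho$, I would note that the concavity hypotheses of Theorem \ref{ForItoThm} fix the sign of $\Theta^{\varepsilon}$ on $D_{\varepsilon}$, after which the telescoping identity (\ref{usein43}) used in Step 3 of the proof of Theorem \ref{HomogGauss} can be adapted to the non-homogeneous setting to give the same bound $\iint_{D_{\varepsilon}}\left\vert \Theta^{\varepsilon}\right\vert dudv\leq cst\cdot\varepsilon\delta^{2}\left(  \varepsilon\right)  $ directly.
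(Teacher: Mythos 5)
Your overall route is the same as the paper's: you split $\mathbf{E}\left[ Z_{3}Z_{4}\right]$ into the planar increment $\Theta^{\varepsilon}\left( u,v\right) =\mathbf{E}\left[ G_{3}G_{4}\right]$ and the regression correction $\mathbf{E}\left[ \Gamma_{3}\Gamma_{4}\right] =r_{11}r_{21}+r_{12}r_{22}$, dispose of the correction by Cauchy--Schwarz together with Lemma \ref{Lemma53}, reduce to the exponent $m-2j=1$ via the pointwise bound $\left\vert \mathbf{E}\left[ Z_{3}Z_{4}\right] \right\vert \leq \delta^{2}\left( \varepsilon\right)$, and control $\iint_{D_{\varepsilon}}\left\vert \Theta^{\varepsilon}\right\vert$ through the representation (\ref{Thetamubirdie}) and the hypothesis (\ref{nhmubirdiecond}). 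This is exactly what the paper does, by invoking the estimate of $J_{j,OD}$ from Step 2 of the proof of Theorem \ref{nonhomoggauss}.

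The gap is in your last inequality. You arrive at $\iint_{D_{\varepsilon}}\left\vert \Theta^{\varepsilon}\right\vert dudv\leq cst\,\varepsilon^{1+1/m+\rho\left( 1-1/m\right) }$ and then require $\varepsilon^{1/m+\rho\left( 1-1/m\right) }\leq cst\,\delta^{2}\left( \varepsilon\right)$. That is a \emph{lower} bound on $\delta^{2}$, and the hypotheses only give an upper bound $\delta^{2}\left( r\right) =o\left( r^{1/m}\right)$; for admissible metrics such as $\delta^{2}\left( r\right) =r$ (which satisfies every assumption of Theorems \ref{nonhomoggauss} and \ref{ForItoThm}) your requirement forces $\rho\geq1$, which is incompatible with the smallness of $\rho$ needed for the diagonal term, so the clause ``precisely because $m\geq3$'' does not rescue the argument. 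In fairness, the paper's own proof has the same directional slip: it derives $cst\,\varepsilon^{1+1/m}\delta^{2\left( m-2j-1\right) }\left( \varepsilon\right)$ for the $\Theta^{\varepsilon}$ contribution and asserts this is $\ll\varepsilon\delta^{2\left( m-2j\right) }\left( \varepsilon\right)$, which would need $\varepsilon^{1/m}\ll\delta^{2}\left( \varepsilon\right)$ --- the reverse of the hypothesis. The correct repair, for both your argument and the paper's, is to note that the bound actually obtained is already enough for the only place the lemma is used: after multiplying by $\varepsilon^{-2}\delta^{4j}\left( \varepsilon\right)$ as dictated by Lemma \ref{lemma1}, one gets $\varepsilon^{-1+1/m}\delta^{2m-2}\left( \varepsilon\right) =o\left( \varepsilon^{-1+1/m}\varepsilon^{\left( m-1\right) /m}\right) =o\left( 1\right)$. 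Finally, your closing ``alternative'' is not available: the telescoping identity (\ref{usein43}) relies on homogeneity, and the monotonicity and concavity assumptions of Theorem \ref{ForItoThm} do not fix the sign of the mixed planar increment $\Theta^{\varepsilon}$ off the diagonal.
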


\textbf{Proof of Lemma \ref{Lemma54}.} As in \cite{GNRV}, we have%
\[
\left\vert \mathbf{E}\left[  Z_{3}Z_{4}\right]  \right\vert ^{m-2j}\leq
cst\cdot\left\vert \mathbf{E}\left[  G_{3}G_{4}\right]  \right\vert
^{m-2j}+cst\cdot\left\vert \mathbf{E}\left[  \Gamma_{3}\Gamma_{4}\right]
\right\vert ^{m-2j}.
\]
The required estimate for the term corresponding to $\left\vert \mathbf{E}%
\left[  \Gamma_{3}\Gamma_{4}\right]  \right\vert ^{m-2j}$ follows by
Cauchy-Schwarz's inequality and Lemma \ref{Lemma53}. For the term
corresponding to $\left\vert \mathbf{E}\left[  G_{3}G_{4}\right]  \right\vert
^{m-2j}$, we recognize that $\mathbf{E}\left[  G_{3}G_{4}\right]  $ is the
negative planar increment $\Theta^{\varepsilon}\left(  u,v\right)  $ defined
in (\ref{defDelta}). Thus the corresponding term was already considered in the
proof of Theorem (\ref{nonhomoggauss}). More specifically, up to the factor
$\varepsilon^{2}\delta^{-4j}\left(  \varepsilon\right)  $, we now have to
estimated the same integral as in Step 2 of that theorem's proof: see
expression (\ref{JjOD}) for the term we called $J_{j,OD}$. This means that%
\begin{align*}
\iint_{D_{\varepsilon}}\left\vert \mathbf{E}\left[  G_{3}G_{4}\right]
\right\vert ^{m-2j}dudv  &  \leq\frac{\varepsilon^{2}}{\delta^{4j}\left(
\varepsilon\right)  }J_{j,OD}\\
&  \leq\varepsilon^{2}\left\vert \mu\right\vert \left(  OD\right)
\delta^{2\left(  m-2j-1\right)  }\left(  \varepsilon\right)  .
\end{align*}
Our hypotheses borrowed from Theorem (\ref{nonhomoggauss}) that $\left\vert
\mu\right\vert \left(  OD\right)  \leq cst\cdot\varepsilon^{1/m-1}$ and that
$\delta^{2}\left(  \varepsilon\right)  =o\left(  r^{1/\left(  2m\right)
}\right)  $ now imply that the above is $\ll\varepsilon\delta^{2\left(
m-2j\right)  }\left(  \varepsilon\right)  $, concluding the lemma's
proof.\hfill$\square$\vspace{0.15in}

\noindent\emph{Step 7. Conclusion}. The remainder of the proof of the theorem
is to check that Lemmas \ref{Lemma53} and \ref{Lemma54} do imply the claim of
the theorem; this is done exactly as in Steps 3 and 4 of the proof of Theorem
4.1 in \cite{GNRV}. Since such a task is only bookkeeping, we omit it,
concluding the proof of Theorem \ref{ForItoThm}.\hfill$\blacksquare$

\begin{center}
\textbf{Acknowledgements}
\end{center}

\noindent The work of F. Russo was partially supported by the ANR Project
MASTERIE 2010 BLAN-0121-01.  Part of it was done during a stay of 
this author at the Bernoulli Center of the EPF Lausanne.
The work of F. Viens is partially supported by NSF
DMS grant 0907321. 

\end{document}